\documentclass{article}
\usepackage{graphicx} 
\usepackage[a4paper, textwidth=16cm]{geometry}
\usepackage{booktabs}
\usepackage{float} 
\usepackage{mathrsfs}
\usepackage{amsthm,amssymb}
\newtheorem{theorem}{Theorem}
\newtheorem{lemma}{Lemma}
\newtheorem{proposition}{Proposition}
\newtheorem{remark}{Remark}
\newtheorem{corollary}{Corollary}
\newtheorem{definition}{Definition}
\usepackage{amsmath}
\usepackage{xcolor}

\usepackage{booktabs,tabularx,array}
\usepackage{hyperref}
\usepackage{microtype}

\hypersetup{
  colorlinks=true,
  linkcolor=blue,
  citecolor=blue,
  urlcolor=blue
}

\newcommand{\PG}{\mathrm{PG}}

\newcommand{\F}{\mathbb{F}}

\newcommand{\Gscr}{\mathscr{G}}
\newcommand{\E}{\mathbb{E}}
\newcommand{\G}{\mathcal{G}}

\newcommand{\Stab}{\operatorname{Stab}}

\title{Weak arcs and applications to the DNA-based storage access problem}
\author{Geertrui Van de Voorde 
\thanks{University of Canterbury, Christchurch, New Zealand. Email: geertrui.vandevoorde@canterbury.ac.nz.}
\footnotemark[3]
\and Ferdinando Zullo 
\thanks{Universit\`a degli studi della Campania Luigi Vanvitelli, Caserta, Italy.
Email: ferdinando.zullo@unicampania.it.}
\thanks{Supported by the Marsden Fund Council, managed by Royal Society Te Ap\={a}rangi (MFP-UOC2515)} 
\thanks{
Partially supported by the Italian National Group for Algebraic and Geometric Structures and their Applications (GNSAGA - INdAM)}}
\date{}

\begin{document}

\maketitle
\begin{abstract}
    {\em Weak arcs} are point sets in $\PG(n-1,q)$ meeting every {\em general} hyperplane (those are the hyperplanes not going through one of the points given by the standard basis vectors) in at most $n-1$ points. 
    
    In this paper, we study weak arcs together with balanced variants which are contained on the sides of the fundamental simplex. We give an upper bound on the size of weak arcs, characterise the largest balanced quasi-arcs in the plane and construct large balanced quasi-arcs in $\PG(3, q)$. We then use these configurations to build point sets for the random-access problem in DNA-based storage. The constructions are explicit, work over small fields, and attain recovery expectations matching the best known asymptotic bounds.
\end{abstract}
\section{Introduction}
\subsection{A geometric view on the DNA access problem}

DNA-based data storage is a promising medium for long-term archival storage. In a typical system, digital data are encoded into DNA strands, synthesised, stored as a pool of molecules, and later recovered by sampling and sequencing strands from that pool. This gives rise to coding and recovery problems, see \cite{YazdiEtAl2015,OrganickEtAl2018,BarLevSabaryGabrysYaakobi2025} and their references.

Our paper is motivated by one of these questions, usually called the \emph{random-access problem}. One wishes to recover a specified information strand after reading as few encoded strands as possible. We use a linear algebraic model introduced in \cite{GruicaBarLevRavagnaniYaakobi2025} and developed further in \cite{GruicaMontanucciZullo2026}.

Here, the original information consists of $n$ strands over a finite field $\mathbb F_q$. Each encoded strand contains a linear combination of these information strands and is represented by its coefficient vector in $\mathbb F_q^n$. The storage system is represented by a multiset $\mathcal G$ of nonzero vectors, where repetition records the multiplicity with which an encoded strand occurs. 

Let $e_1,\ldots,e_n$ be the standard basis vectors of $\mathbb F_q^n$. The $i$-th information strand can be recovered from a collection of sampled strands precisely when their coefficient vectors span $e_i$. If the elements of $\mathcal G$ are read in a uniformly random order, with replacement, let $\tau_{e_i}(\mathcal G)$ be the first time at which the vectors read so far span $e_i$. The goal is to choose the support and multiplicities of $\mathcal G$ so as to minimise
\[
    M(\mathcal G)
    :=\max_{1\leq i\leq n}\mathbb E[\tau_{e_i}(\mathcal G)],
\]
the worst-case expected number of reads needed to recover any single information strand.

It is natural to regard the elements of $\mathcal G$ as points of $\mathrm{PG}(n-1,q)$. The standard basis vectors correspond to the fundamental points $P_1,P_2,\ldots,P_n$; we denote the corresponding value by $\tau_{P_i}(\mathcal G)$.

The DNA-storage problem thus becomes: find multisets of points in $\mathrm{PG}(n-1,q)$ for which
\[
    \max_{1\leq i\leq n}\mathbb E[\tau_{P_i}(\mathcal G)]
\]
is small.
Several constructions for good point sets, that is, those with \[\max_{1\leq i\leq n}\mathbb E[\tau_{P_i}(\mathcal G)]<n,\]  have appeared in the literature in the previous years. The best overall bound at the time of writing is given in \cite{WangYaakobi2026} (see also \cite{BodurEtAl2025}). Their construction for $n=4$ has {\em non-integer} multiplicities for strands, and an optimal value of \[
 \limsup_{q\to\infty}M(\Gscr(q,4))<0.862882\cdot4.
\]
We will use this result as a benchmark for our constructions (see Section \ref{sec:constructions}).

\subsection{Weak arcs}

The previous problem suggests distributing the points so that subspaces avoiding a fundamental point contain relatively few points of $\mathcal G$, and motivates the following definition.

\begin{definition}
Let $P_1,\ldots,P_n$ be points in general position in $\mathrm{PG}(n-1,q)$. A point set $S$ is a \emph{weak arc with respect to $P_1,\ldots,P_n$} if every hyperplane $H$ not containing a fundamental point satisfies
\[
    |H\cap S|\leq n-1.
\]
\end{definition}

Every arc, that is, every point set in general position, is a weak arc, but the converse need not hold. When the points $P_1,\ldots,P_n$ are represented by the standard basis vectors, the hyperplanes not through any of them have equations
\[
    a_1X_1+\cdots+a_nX_n=0
\]
with every coefficient $a_i$ nonzero. We call these hyperplanes \emph{general hyperplanes}.

Of particular interest are point sets that are contained on the edges of the fundamental simplex (the edges are the lines of the form $P_iP_j$). We call such a configuration a \emph{balanced quasi-arc} of parameter $a$ when it contains the fundamental points and exactly $a$ non-fundamental points on every edge. This is a generalisation of the planar balanced quasi-arcs as introduced in \cite {GruicaMontanucciZullo2026} in which case they are contained in the sides of the fundamental triangle. 

\subsection{Overview of this paper}
In the first part of this paper, we will study weak arcs in $\PG(n-1,q)$ from a geometric point of view, before considering the application to the DNA problem in the second part of this paper. 

More precisely, in Subsection \ref{subs:size}, we derive an upper bound on the size of weak arcs in $\PG(n,q)$, as well as a probabilistic construction of a quasi-arc contained on the edges of the fundamental simplex.
We then focus on the planar case in Subsection \ref{subs:planar}, where we  provide a characterisation of the largest possible balanced quasi-arcs. For $q$ odd, these arise from the squares in $\F_q$, but for $q$ even, the situation is different.

In Subsection \ref{subs:higher} we generalise the planar balanced quasi-arc constructed in Subsection \ref{subs:planar} to a particularly nice {\em weak  cap}, which is a point set such that every line not through one of the fundamental points contains at most two points of the set. We then show that a well-chosen subset of this weak cap is a {\em weak arc}, which is contained on the sides of the fundamental tetrahedron.

In Section \ref{sec:arcDNA} we link the weak arcs to the random access problem. We first give an alternative expression for the value $\E[\tau_P]$ in Subsection \ref{subs:alt}, and then, in Subsection \ref{subs:opt} give evidence to the claim that quasi-arcs are a good choice as point set for the DNA problem: we show that balanced quasi-arcs of parameter $(q-1)/2$ are the optimal choice of points when taking a balanced point set contained in the edges of the fundamental triangle.

In Section \ref{sec:constructions}, we will consider different point sets, with multiplicities and show that they have similarly low expectation or sometimes lower expectation to the examples found in earlier work. Our geometric constructions have the advantage that they work for any $q$ and can be made as large/small as desired, to a certain extent, independently of $q$. More details of those comparisons are provided in Section \ref{subs:WY}.

\section{Weak arcs}

\subsection{On the size of weak arcs}\label{subs:size}
\subsubsection{An upper bound on the size of a weak arc}

Let $P_0,P_1,\dots,P_n$ denote the fundamental points of
$\PG(n,q)$. A hyperplane
$\pi\subset\PG(n,q)$ with equation $a_0x_0+\cdots+a_nx_n=0$ is called
\emph{general} if it does not contain any of the $P_i$'s. This happens if and only if all coefficients $a_i$ are nonzero. It is clear that the number of general hyperplanes in $\PG(n,q)$ equals $(q-1)^n$. We first count how many general hyperplanes there are through a point in $\PG(n,q)$, which will depend on the position of that point with respect to the fundamental points. For a point $Q=(x_0,\ldots,x_n)\in\PG(n,q)$, this is determined by its \emph{support} $J(Q)$ where
\[
  J(Q)=\{\,i\in\{0,\dots,n\}\,:\,x_i\neq 0\,\},\qquad j(Q)=|J(Q)|\in\{1,\dots,n+1\}.
\]
We see that $k=1$ corresponds to fundamental points, $k=2$ to points on
a line through two fundamental points but not a fundamental point itself, and so on, with $k=n+1$ for points not contained in any coordinate hyperplane, that is, not spanned by any set of $n$ fundamental points.

For a point set $S\subseteq\PG(n,q)$ we write $N_k=|\{Q\in S:j(Q)=k\}|$, so
$|S|=\sum_{k=1}^{n+1}N_k$.

\begin{lemma} \label{lem:sumset}
Let $q$ be a prime power and let $b_0,b_1,\dots,b_s \in \mathrm{GF}(q)^*$ be
fixed nonzero elements. The number of tuples
$(a_0,a_1,\dots,a_s) \in (\mathrm{GF}(q)^*)^{s+1}$ satisfying
\[
    \sum_{i=0}^{s} a_i b_i \;=\; 0
\]
is independent of the choice of $b_0,b_1,\dots,b_s$ and equals
\[
    N_{s+1}(q) \;=\; \frac{(q-1)^{s+1} + (-1)^{s+1}(q-1)}{q}.
\]
\end{lemma}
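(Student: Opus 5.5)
First I will remove the dependence on the $b_i$. Because every $b_i$ is nonzero, the map $(a_0,\dots,a_s)\mapsto(a_0b_0,\dots,a_sb_s)$ is a bijection of $(\mathrm{GF}(q)^*)^{s+1}$ onto itself. It sends the solutions of $\sum a_ib_i=0$ exactly onto the solutions of $\sum c_i=0$ with all $c_i\neq 0$. So the count does not depend on $b_0,\dots,b_s$, and it is enough to count the tuples $(c_0,\dots,c_s)\in(\mathrm{GF}(q)^*)^{s+1}$ whose sum is zero. Write $N_{s+1}(q)$ for this number.

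Next I will set up a recursion on the number of variables. Let $m\geq 1$, and count the $m$-tuples of nonzero elements with nonzero sum. There are $(q-1)^m-N_m(q)$ of them. Each such tuple extends in exactly one way to an $(m+1)$-tuple with sum zero: take $c_m=-(c_0+\cdots+c_{m-1})$, which is then automatically nonzero. Conversely, if the first $m$ coordinates already sum to zero, the required last coordinate would be $0$, which is not allowed. This gives
\[
N_{m+1}(q)=(q-1)^m-N_m(q),\qquad N_1(q)=0.
\]

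To finish, I will check by induction that the closed form satisfies this recursion. At the base, $N_1(q)=\frac{(q-1)-(q-1)}{q}=0$. For the step,
\[
(q-1)^m-\frac{(q-1)^m+(-1)^m(q-1)}{q}=\frac{(q-1)^{m+1}+(-1)^{m+1}(q-1)}{q}.
\]
Alternatively, one can count with additive characters: $N=\frac1q\sum_{\psi}\bigl(\sum_{c\neq0}\psi(c)\bigr)^{s+1}$, where the trivial character contributes $(q-1)^{s+1}$ and each of the $q-1$ nontrivial characters contributes $(-1)^{s+1}$. This gives the same formula directly.

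The main step is the reduction to $b_i=1$. After that the recursion is elementary, and the only place that needs care is the step showing the forced last coordinate is nonzero.
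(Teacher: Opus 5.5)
Your proof is correct and follows the paper's argument: the same rescaling bijection $c_i=a_ib_i$ to reduce to $b_i=1$, the same recurrence $N_{m+1}(q)=(q-1)^m-N_m(q)$ with $N_1(q)=0$, and then solving it. Your explicit induction check fills in the solving step, which the paper leaves implicit, and the additive-character computation you sketch is a valid independent confirmation of the formula.
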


\begin{proof}
First note that since $b_0,\ldots,b_s$ are fixed non-zero elements, we may consider the bijection $c_i=a_i\cdot b_i$ for all $i$, and equivalently count the number $N_{s+1}(q)$ of non-zero elements $c_0,\ldots,c_s$ such that $\sum_{i=0}^{s} c_i = 0.$

We can find $N_{k+1}(q)$ for general $k \geq 0$ by deriving a recurrence relation. Clearly $N_1(q)=0$. For $k \geq 1$, choose $c_0,c_1,\dots,c_{k-1} \in \mathrm{GF}(q)^*$ arbitrarily; this
gives $(q-1)^{k}$ possibilities. The final coordinate is then forced to be
$c_k = -(c_0 + \cdots + c_{k-1})$, and this is an admissible choice precisely when
$c_0 + \cdots + c_{k-1} \neq 0$, so the number of bad tuples is exactly $N_{k}(q)$. Hence we find the recurrence relation
\begin{align*}
    N_{k+1}(q) \;=\; (q-1)^{k} - N_{k}(q),
\end{align*}

which can be solved, using that $N_1(q)=0$, to find
\[
    N_{k+1}(q) \;=\; \frac{(q-1)^{k+1} + (-1)^{k+1} (q-1)}{q}. \qedhere
\]
\end{proof}
\begin{lemma}\label{lem:fj}
For $Q\in\PG(n,q)$ with $j(Q)=j$, the number of general hyperplanes through
$Q$ is
\[
  f_j \;=\; \frac{(q-1)^n+(-1)^j(q-1)^{n+1-j}}{q}.
\]
In particular $f_1=0$, $f_2=(q-1)^{n-1}$, and
$f_3=(q-1)^{n-2}(q-2)$.
\end{lemma}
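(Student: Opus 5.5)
We reduce directly to Lemma~\ref{lem:sumset}. Let $Q=(x_0,\dots,x_n)$ with support $J=J(Q)$ of size $j$. A general hyperplane $a_0X_0+\cdots+a_nX_n=0$ is determined by a tuple $(a_0,\dots,a_n)\in(\mathrm{GF}(q)^*)^{n+1}$ up to a nonzero scalar, so general hyperplanes correspond to such tuples modulo the $q-1$ scalar multiples. We therefore count tuples with all $a_i\neq0$ and $\sum_i a_ix_i=0$, and divide by $q-1$.

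The condition $\sum_i a_ix_i=0$ only involves the coordinates $i\in J$, namely $\sum_{i\in J}a_ix_i=0$ with all $x_i\neq0$ for $i\in J$. By Lemma~\ref{lem:sumset} with $s+1=j$ and $b_i=x_i$, the number of choices of $(a_i)_{i\in J}$ is
\[
N_j(q)=\frac{(q-1)^j+(-1)^j(q-1)}{q}.
\]
The remaining $n+1-j$ coefficients $a_i$ with $i\notin J$ are unconstrained nonzero elements, giving $(q-1)^{n+1-j}$ choices. Hence the number of tuples is $N_j(q)(q-1)^{n+1-j}$. Dividing by $q-1$ gives
\[
f_j=\frac{(q-1)^{n+1}+(-1)^j(q-1)^{n+2-j}}{q(q-1)}=\frac{(q-1)^n+(-1)^j(q-1)^{n+1-j}}{q}.
\]

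The special values follow by substitution. For $j=1$ we get $f_1=\bigl((q-1)^n-(q-1)^n\bigr)/q=0$. For $j=2$ we get $f_2=\bigl((q-1)^n+(q-1)^{n-1}\bigr)/q=(q-1)^{n-1}\cdot q/q=(q-1)^{n-1}$. For $j=3$ we get $f_3=(q-1)^{n-2}\bigl((q-1)^2-(q-1)\bigr)/q$. Since $(q-1)^2-(q-1)=(q-1)(q-2)$, this equals $(q-1)^{n-1}(q-2)/q$.

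This does not match the stated $f_3=(q-1)^{n-2}(q-2)$, so I check it directly. For $n=2$ and $Q=(1,1,1)$, the general lines through $Q$ are those with $a_0+a_1+a_2=0$ and all $a_i\neq0$. The number of such tuples is $N_3(q)=(q-1)(q-2)$, which gives $q-2$ lines. But the general formula with $j=3$, $n=2$ gives $\bigl((q-1)^2-1\bigr)/q=q-2$, consistent with it. Evaluating the general formula correctly, $f_3=\bigl((q-1)^n-(q-1)^{n-2}\bigr)/q=(q-1)^{n-2}\bigl((q-1)^2-1\bigr)/q=(q-1)^{n-2}(q-2)$. This matches the statement, so the earlier exponent for $j=3$ was an arithmetic slip ($n+1-j=n-2$, not $n-1$).

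The only delicate point is the projective normalisation, namely dividing by $q-1$. The rest is the direct application of Lemma~\ref{lem:sumset}.
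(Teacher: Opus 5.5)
Your proof is correct and follows essentially the same route as the paper: you apply Lemma~\ref{lem:sumset} to the coordinates in the support $J(Q)$, multiply by the $(q-1)^{n+1-j}$ free nonzero choices outside $J(Q)$, and divide by $q-1$ for projective scaling. The one blemish is the intermediate slip in evaluating $f_3$, where you used exponent $n-1$ instead of $n+1-j=n-2$; you caught and corrected it yourself, so simply delete that detour from a final write-up.
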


\begin{proof}
Consider a point $Q=(x_0,\ldots,x_n)$ with $J=J(Q)$, $|J|=j$. A general hyperplane with equation $a_0x_0+\cdots a_nx_n=0$ contains $Q$ if and only if 
 $\sum_{i\in J}a_ix_i=0$. The coefficients $a_i$ not contained in $J$ are free, so there are $(q-1)^{n+1-j}$ choices for them. The number of choices for $a_i$ such that $\sum_{i\in J}a_ix_i=0$ is given in Lemma \ref{lem:sumset}, and equals $N_{j}(q)= \frac{(q-1)^{j} + (-1)^{j} (q-1)}{q}.$
 This shows that the number of choices for $(a_0,\ldots,a_n)$ determining a general hyperplane through $Q$ is given by $(q-1)^{n+1-j}(\frac{(q-1)^{j} + (-1)^{j} (q-1)}{q})$. Since every general hyperplane is determined by $(q-1)$ such tuples, we find that the number of general hyperplanes is given by $(q-1)^{n-j}(\frac{(q-1)^{j} + (-1)^{j} (q-1)}{q}).$
\end{proof}

It now directly follows that the non-fundamental points that are lying on the smallest number of general hyperplanes are those lying on a plane spanned by $3$ fundamental points (but not on an edge).
\begin{corollary}\label{lem:fmin}
For every $j\in\{2,3,\dots,n+1\}$ we have $f_j\ge f_3=(q-1)^{n-2}(q-2)$.
\end{corollary}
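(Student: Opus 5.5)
We compare the closed form of Lemma \ref{lem:fj} directly. Writing $f_j=\frac{(q-1)^n+(-1)^j(q-1)^{n+1-j}}{q}$, the leading term $(q-1)^n/q$ is independent of $j$. So the inequality $f_j\ge f_3$ is equivalent to
\[
(-1)^j(q-1)^{n+1-j}\;\ge\;-(q-1)^{n-2}.
\]
We then split according to the parity of $j$.

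If $j$ is even, the left-hand side is nonnegative, while the right-hand side is nonpositive. The inequality therefore holds trivially; it is strict except in the degenerate case $q=2$, where both sides vanish or the statement is vacuous.

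If $j$ is odd, then $j\ge 3$, since $j\ge 2$. The inequality becomes $(q-1)^{n+1-j}\le (q-1)^{n-2}$. This holds because $q-1\ge 1$ and $n+1-j\le n-2$, i.e. $j\ge 3$, so raising $q-1$ to a smaller exponent cannot increase it. Equality occurs exactly for $j=3$, or when $q=2$.

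The argument is essentially immediate, and there is no real obstacle. The only care needed is the case $q=2$, where $q-1=1$ and all the $f_j$ collapse to $0$ or $1/2\cdot(\ldots)$. There one should check that the formula still gives $f_j\ge f_3=0$. This follows from the same computation, since $(1)^n+(-1)^j\ge 0$. Finally, the explicit value $f_3=(q-1)^{n-2}(q-2)$ is read off from Lemma \ref{lem:fj}.
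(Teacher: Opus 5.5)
Your proof is correct and is the same direct comparison of the closed form from Lemma~\ref{lem:fj} that the paper relies on when it says the corollary ``directly follows'' (the paper writes out no further argument). One harmless slip: for even $j$ the inequality $(-1)^j(q-1)^{n+1-j}\ge -(q-1)^{n-2}$ is strict for every $q$, including $q=2$, where the two sides are $1$ and $-1$ rather than both vanishing, so $q=2$ needs no special treatment anywhere.
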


\begin{theorem}\label{thm:upperboundweakarc}
Let $S$ be a weak arc in $\PG(n,q)$, $n\ge 2$ and $q\ge n+3$. Then
\[
  |S|\;\le\;nq+(n+1).
\]
\end{theorem}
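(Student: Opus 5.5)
Double count incidences between the points of $S$ and the general hyperplanes of $\PG(n,q)$. Each of the $(q-1)^n$ general hyperplanes meets $S$ in at most $n$ points, since a weak arc in $\PG(n,q)$ meets every general hyperplane in at most $n$ points. Hence
\[
  \sum_{Q\in S} f_{j(Q)} \;\le\; n(q-1)^n .
\]
Fundamental points contribute nothing, because $f_1=0$. So at most $n+1$ points of $S$ can be fundamental points. For every other point, Corollary \ref{lem:fmin} gives $f_{j(Q)}\ge (q-1)^{n-2}(q-2)$. This yields
\[
  |S|-(n+1)\;\le\; \frac{n(q-1)^2}{q-2} \;=\; nq + \frac{n}{q-2}.
\]
Since $q\ge n+3$, we have $n/(q-2)<1$. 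Integrality then gives $|S|-(n+1)\le nq$, that is, $|S|\le nq+n+1$.

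The steps in order are as follows. First, state the incidence count, using Lemma \ref{lem:fj} for the number of general hyperplanes through each point. Second, isolate the fundamental points, which are at most $n+1$ in number and lie on no general hyperplane. Third, bound the contribution of each remaining point from below using Corollary \ref{lem:fmin}. Fourth, divide through and use integrality. One could sharpen the count by splitting $S$ according to $N_k$ and noting that $f_2=(q-1)^{n-1}$ is larger than $f_3$. However, the crude bound through $f_3$ already gives the stated inequality, so I would not split further.

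The main obstacle is only the final arithmetic. We have $(q-1)^2=(q-2)q+1$, so $n(q-1)^2/(q-2)=nq+n/(q-2)$. The hypothesis $q\ge n+3$ is exactly what makes $n/(q-2)<1$, which lets the floor remove the fractional term. I would also check briefly that Corollary \ref{lem:fmin} holds for all $j\ge 2$ with $q\ge 3$. From the formula in Lemma \ref{lem:fj}, $f_j$ alternates around $(q-1)^n/q$ with a perturbation that shrinks as $j$ grows, so its minimum over $j\ge2$ is attained at $j=3$. Since the corollary is stated earlier in the paper, we may simply invoke it.
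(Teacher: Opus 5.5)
Your proposal is correct and is essentially the paper's proof. You double count incidences between points of $S$ and the $(q-1)^n$ general hyperplanes, discard the at most $n+1$ fundamental points via $f_1=0$, bound every other point's contribution below by $f_3=(q-1)^{n-2}(q-2)$, and conclude from $n(q-1)^2/(q-2)=nq+n/(q-2)$ with $n/(q-2)<1$ when $q\ge n+3$, making explicit the integrality step that the paper leaves implicit.
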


\begin{proof}
We double count couples $(Q,\pi)$ with $Q\in S$, $\pi$ a general hyperplane,
$Q\in\pi$:
\[
  \sum_{k=1}^{n+1}N_k f_k
  \;=\;\sum_{\pi\text{ gen.}}|S\cap\pi|
  \;\le\;n(q-1)^n.
\]
Where we have used that every general hyperplane has at most $n$ points in $S$.
Since $f_1=0$ this reads $\sum_{k=2}^{n+1}N_k f_k\le n(q-1)^n$. Recall that $|S|=\sum_{k=1}^{n+1} N_k$.
By
Lemma~\ref{lem:fmin}, $f_k\ge f_3=(q-1)^{n-2}(q-2)$ for $k\ge 2$, so
\[
  (|S|-N_1)\,(q-1)^{n-2}(q-2)\;\le\;n(q-1)^n,
\]
hence
\[
  |S|-N_1\;\le\;\frac{n(q-1)^2}{q-2}
\]

Since $N_1\le n+1$,
\[
  |S|\;\le\;nq+(n+1)+\frac{n}{q-2}.
\]
For $q\ge n+3$, we find that
$|S|\le nq+(n+1)$.
\end{proof}

The bound can be attained as follows: consider $n$ lines $\ell_1,\dots,\ell_n$ through a fundamental point (say $P_0$) where the lines do not contain any further fundamental point. Put
\[
  S=\ell_1\cup\cdots\cup\ell_n\cup\{P_1,\dots,P_n\}.
\]
Then $|S|=nq+(n+1)$, and every general hyperplane $\pi$ meets $S$ in exactly $n$ points.
When $q$ is large enough, at least for $n=2$, it can be shown that the converse is true \cite{Cassie}: a weak arc in $\PG(2,q)$ of size $2q+3$ consists of two lines through exactly one fundamental point, together with the remaining fundamental points.

\subsubsection{A probabilistic construction of a balanced quasi-arc}

In recent years, it has become common practice to construct certain subsets in projective planes or spaces (small complete arcs, strong blocking sets, etc...) using the probabilistic method. 
Before turning to explicit constructions, it is therefore natural to ask what happens if the points on the edges of the fundamental simplex are chosen at random. 

As the following proposition shows, this approach does produce balanced quasi-arcs, but only of relatively small size. In fact, the maximum allowed parameter grows much more slowly in $q$ than in the explicit constructions that we will obtain in the next Sections; the goal of this subsection is merely to derive a general base-line result for the existence of balanced quasi-arcs in $\PG(n,q)$.

\begin{remark}
There is little point applying the probabilistic method to construct weak arcs without any further restrictions. We have seen that it is easy to take a union of $n$ lines through a fundamental point, together with the fundamental points, to obtain a weak arc of size $nq+n+1$. The probabilistic method in $\PG(n,q)$ will construct a weak arc of size order $q^{1/n}$, far below this value. This is roughly the same order as found when constructing an arc with this method; the fact that we only constrain the hyperplanes not through the fundamental points does not structurally improve the size of the obtained point set. 
\end{remark}




In view of this remark, we now check what happens if we restrict ourselves to the sides of the fundamental simplex.
Let $P_1,P_2,\ldots,P_{n+1}$ be the fundamental points in $\PG(n,q)$, and say that a {\em balanced quasi-arc} of parameter $a$ in $\PG(n,q)$ is a weak arc whose points all lie on the sides of the fundamental simplex.
Just like for weak arcs, we will find a point set of order $q^{1/n}$ but in this case, there is no trivial construction that immediately does better. In the next sections though, we will construct much larger balanced quasi-arcs in $\PG(2,q)$ and $\PG(3,q).$

\begin{lemma}\label{lem:hyperplanes}
Let $m=\binom{n+1}{2}$, the number of edges of the fundamental simplex and let $V$ be the set of points on an edge of the simplex, different from the fundamental points. Then the number $B$ of
$(n+1)$-subsets of $V$ that lie on a general hyperplane is at most
\[
B\;\le\;(q-1)^n\binom{m}{n+1}.
\]
\end{lemma}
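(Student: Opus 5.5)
We bound $B$ by charging every bad $(n+1)$-subset to a general hyperplane together with a choice of $n+1$ distinct edges, and then checking that such a choice leaves at most one subset. The key observation is that a general hyperplane $\pi$ meets each edge $P_iP_j$ in exactly one point, and that point lies in $V$. Indeed, $\pi$ does not contain $P_i$ or $P_j$, so it does not contain the line $P_iP_j$ and meets it in a unique point. This point is of the form $a_jP_i - a_iP_j$, which has support $\{i,j\}$ and hence belongs to $V$. Consequently $|\pi\cap V| = m$ exactly, with one point on each of the $m$ edges.

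Now let $T\subseteq V$ be an $(n+1)$-subset lying on some general hyperplane $\pi$. Each edge contributes at most one point to $\pi\cap V$, so the $n+1$ points of $T$ lie on $n+1$ pairwise distinct edges. Moreover $T$ is completely determined by $\pi$ together with this set of $n+1$ edges: its points are the unique intersections of $\pi$ with those edges.

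This gives a surjection from the set of pairs $(\pi,E)$ onto the set of bad subsets. Here $\pi$ is a general hyperplane and $E$ is a set of $n+1$ edges. The pair is sent to the set of points $\pi\cap e$ for $e\in E$, and every bad subset arises in this way. The number of general hyperplanes is $(q-1)^n$, as noted at the start of Subsection~\ref{subs:size}, and the number of choices of $E$ is $\binom{m}{n+1}$. Hence
\[
B\le (q-1)^n\binom{m}{n+1}.
\]

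The map is generally far from injective, because a given $T$ may lie on several general hyperplanes. This only makes the inequality weaker, so it is harmless here.

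There is no real obstacle in this argument. The only point needing care is the verification that $\pi\cap P_iP_j$ is a single point of $V$, rather than a fundamental point or the whole line. This follows immediately from the fact that all coefficients of a general hyperplane are nonzero.
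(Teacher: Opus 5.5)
Your proof is correct and is essentially the paper's argument. Each of the $(q-1)^n$ general hyperplanes meets every one of the $m$ edges in exactly one point of $V$, so it contains exactly $\binom{m}{n+1}$ of the relevant $(n+1)$-subsets, and summing over hyperplanes (your surjection from pairs $(\pi,E)$) gives the bound; your check that $\pi\cap P_iP_j$ is the point $a_jP_i-a_iP_j$ with support $\{i,j\}$ fills in a detail the paper states without justification.
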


\begin{proof}
There are $(q-1)^n$ general hyperplanes. Each such
hyperplane meets each of the $m$ edges in a point  of $V$. Hence, it contains exactly $\binom{m}{n+1}$  $(n+1)$-subsets in $V$. 
\end{proof}

\begin{proposition}\label{thm:balanced}
Let $q$ be a prime power, $n\ge2$, $m=\binom{n+1}{2}$, and put
\[
c_n\;=\;\tfrac12\bigl(2\tbinom{m}{n+1}\bigr)^{-1/n},
\qquad
k\;=\;\bigl\lfloor 2c_n\,(q-1)^{1/n}\bigr\rfloor .
\]
Then $\PG(n,q)$ contains a balanced quasi-arc of parameter $a$ for
\emph{every} integer $a$ with
\[
1\;\le\;a\;\le\;\Bigl\lceil \tfrac{k}{2}\Bigr\rceil .
\]
\end{proposition}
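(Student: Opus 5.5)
A standard alteration argument will do it. Choose $k$ points independently and uniformly at random on each of the $m$ edges, or alternatively choose each point of $V$ independently with probability $p=k/(q-1)$. The cleanest plan is to choose exactly $k$ random non-fundamental points on each edge, independently across edges. Call a ``bad'' configuration an $(n+1)$-subset of the chosen points that lies on a general hyperplane. A general hyperplane meets each edge in exactly one point of $V$, so any $(n+1)$ points of $V$ on a common general hyperplane lie on $n+1$ distinct edges. Such a point set is contained in the random set with probability $(k/(q-1))^{n+1}$.

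Using Lemma~\ref{lem:hyperplanes}, the expected number of bad $(n+1)$-subsets is at most
\[
(q-1)^n\binom{m}{n+1}\Bigl(\frac{k}{q-1}\Bigr)^{n+1}=\binom{m}{n+1}\frac{k^{n+1}}{q-1}.
\]
Since $k\le 2c_n(q-1)^{1/n}$, we have $k^n\le (2c_n)^n(q-1)=(q-1)/(2\binom{m}{n+1})$. So this expectation is at most $k/2$. Hence some choice of points has at most $k/2$ bad subsets. Delete one point from each bad subset, which removes at most $\lfloor k/2\rfloor$ points in total. The worst case is that all deletions fall on one edge, which then keeps at least $k-\lfloor k/2\rfloor=\lceil k/2\rceil$ points.

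Every edge therefore retains at least $\lceil k/2\rceil$ chosen points. Now take any $a\le\lceil k/2\rceil$ and keep exactly $a$ points on each edge, discarding the rest. Add the fundamental points, which lie on no general hyperplane. Removing points cannot create bad subsets, so the result is a balanced quasi-arc of parameter $a$: every general hyperplane contains at most $n$ of its points.

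The step to watch is the edge-balancing after deletion. I bound it crudely by letting the worst-case edge absorb all deletions; this is exactly why the parameter is $\lceil k/2\rceil$. One also needs $k\le q-1$ so the choice on each edge is possible, which is clear since $c_n<1/2$. The remaining work is checking the exponent arithmetic that gives the bound $k/2$.
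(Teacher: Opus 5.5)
Your proposal is correct and is essentially the paper's own proof. Both use a uniformly random $k$-subset on each edge, the first-moment bound $\binom{m}{n+1}k^{n+1}/(q-1)\le k/2$ via Lemma~\ref{lem:hyperplanes}, deletion of one point from each bad $(n+1)$-set, and trimming every edge to the required size. You are slightly more explicit than the paper on three points: why bad subsets meet $n+1$ distinct edges, why $k\le q-1$, and why every $a\le\lceil k/2\rceil$ (not only $a=\lceil k/2\rceil$) is obtained.
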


\begin{proof}
Note first that $2c_n=(2\binom{m}{n+1})^{-1/n}$, so
$k=\lfloor(2\binom{m}{n+1})^{-1/n}(q-1)^{1/n}\rfloor$ and hence
$k^{\,n}\le \frac{q-1}{2\binom{m}{n+1}}$.

On each line $P_iP_j$, choose independently, a uniformly random $k$-subset
$A_{ij}$ of points different from $P_i,P_j$, and set $R=\bigcup_{i<j}A_{ij}$; by
construction $R$ has exactly $k$ points on every edge.

Now let $B_R$ be the number of bad $(n+1)$-subsets contained in $R$: those are the $(n+1)$-subsets where each point belongs to a different edge.
On each edge $P_iP_j$, a point lies in
$A_{ij}$ with probability $k/(q-1)$, and this is independent across edges. Hence,
using linearity of expectation, and that $B\le(q-1)^n\binom{m}{n+1}$ (Lemma \ref{lem:hyperplanes}),
\[
\E[B_R]\;\le\;B\Bigl(\frac{k}{q-1}\Bigr)^{\!n+1}
\;\le\;\binom{m}{n+1}\frac{k^{\,n+1}}{q-1}\;=:\;\mu(k).
\]
Since $k^{\,n}\le \frac{q-1}{2\binom{m}{n+1}}$,
\[
\mu(k)\;=\;\binom{m}{n+1}\frac{k^{\,n}}{q-1}\cdot k\;\le\;\frac{k}{2}.
\]
This shows that there is a choice of $R$ such that $B_R\le\lfloor\mu(k)\rfloor\le k/2$. Fix this $R$ and delete one point from each bad $(n+1)-$ set. Since we need to delete at most $\lfloor\mu(k)\rfloor$ points in total, we certainly delete at most $\lfloor\mu(k)\rfloor$ on any single edge. Each edge still has at least
\[
k-\lfloor\mu(k)\rfloor\;\ge\;k-\tfrac{k}{2}\;=\;\tfrac{k}{2}
\]
points. We can remove points on each edge to exactly $\lceil k/2\rceil$ points to create a balanced quasi-arc of parameter
$\lceil k/2\rceil$.
\end{proof}
\begin{remark}
    A weaker but easier bound on $k$ can be produced from the expressions in Proposition \ref{thm:balanced}, to show that there exists a balanced quasi-arc with parameter $a$ for $a$ at most $\frac{(q-1)^n}{9n}$.
\end{remark}

\subsection{Balanced quasi-arcs in the plane}\label{subs:planar}

In this section, we will characterise large (balanced) quasi-arcs in the plane. Recall that these are weak arcs whose point set lies on the sides of a triangle. 
We first provide a trivial upper bound on the number of points $x$ on each side of a balanced quasi-arc, see \cite[Proposition 9]{GruicaMontanucciZullo2026} for a proof.

\begin{lemma}\label{prop:univ}
For every prime power $q$, any balanced quasi-arc of parameter $a$ satisfies
\[
  a\ \le\ \Bigl\lfloor \tfrac{q-1}{2}\Bigr\rfloor .
\]
\end{lemma}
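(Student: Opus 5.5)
Coordinatise with $P_1=(1,0,0)$, $P_2=(0,1,0)$, $P_3=(0,0,1)$. The non-fundamental points on the three sides are $(1,x,0)$, $(0,1,y)$ and $(z,0,1)$ with $x,y,z\in\F_q^*$. A balanced quasi-arc of parameter $a$ then corresponds to three subsets $X,Y,Z\subseteq\F_q^*$, each of size $a$. The general lines are $a_1X_1+a_2X_2+a_3X_3=0$ with all $a_i\neq0$. Such a line contains a unique point on each side, so it meets the quasi-arc in at most $2$ non-fundamental points unless all three of its side points lie in the set. The weak arc condition therefore says exactly that no general line passes through one chosen point on each of the three sides.

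The three points $(1,x,0)$, $(0,1,y)$, $(z,0,1)$ are collinear iff
\[
\det\begin{pmatrix}1&x&0\\0&1&y\\z&0&1\end{pmatrix}=1+xyz=0,
\]
and the line through them is then automatically general, since it contains no vertex. So the condition is $xyz\neq-1$ for all $x\in X$, $y\in Y$, $z\in Z$. Equivalently, the product set $XY$ avoids the set $-Z^{-1}=\{-z^{-1}:z\in Z\}$, which also has size $a$.

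Now work in the cyclic group $\F_q^*$ of order $q-1$. By the Cauchy--Davenport type bound for cyclic groups (Kneser's theorem, or simply $|XY|\ge|X|=a$), I will try to show $|XY|\ge \min(q-1,2a-1)$ or something similar. The crude version already suffices. Since $XY$ and $-Z^{-1}$ are disjoint subsets of $\F_q^*$, we get $|XY|+a\le q-1$, hence $2a\le |XY|+a\le q-1$. Thus $a\le (q-1)/2$, and since $a$ is an integer, $a\le\lfloor (q-1)/2\rfloor$.

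The only step requiring care is the collinearity computation and the check that the relevant lines are genuinely general. Beyond that, the trivial inequality $|XY|\ge|X|$ (fix one $y\in Y$) does all the work, so no additive-combinatorics theorem is actually needed.
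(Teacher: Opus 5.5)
Your proof is correct and follows essentially the paper's route. The paper cites \cite{GruicaMontanucciZullo2026} for the proof, but its Lemma~\ref{prop:reduction} reduces a balanced quasi-arc to $|A\cdot B|\le q-1-a$ (via the parametrisation $c=ab$ instead of your $xyz=-1$), and adding $|A\cdot B|\ge|A|=a$ is exactly your count; the same pigeonhole appears geometrically in the proof of Theorem~\ref{thm:optimal-simple-side-configuration}, where $2a>q-1$ forces a general $3$-secant through any chosen point on $P_1P_2$. Drop the aside about a Cauchy--Davenport bound $|XY|\ge\min(q-1,2a-1)$, which is false when $q-1$ is composite (take $X=Y$ a proper nontrivial subgroup), though nothing in your argument depends on it.
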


In order to characterise the largest balanced quasi-arcs, we parametrise the non-vertex points on the triangle by
\[
  P_1(a)=(1,-a,0),\qquad P_2(b)=(0,1,b),\qquad P_3(c)=(1,0,c),
  \qquad a,b,c\in \F_q^* .
\]
Let $\F_q^\ast$ represent the multiplicative group of the field $\F_q$.  The following easy Lemma translates the geometric property of being a weak arc into a property of subsets of $\F_q^*$.

\begin{lemma}\label{prop:reduction}
Choose $A,B,C\subseteq \F_q*$ and let
\[
  S=\{P_1(a):a\in A\}\cup\{P_2(b):b\in B\}\cup\{P_3(c):c\in C\}.
\]
Then $S$ is a weak arc in $\PG(2,q)$ if and only if
\[
  A\cdot B\;\cap\;C=\varnothing,
\]
where $A\cdot B=\{ab:a\in A,\ b\in B\}$. In particular a balanced quasi-arc of
parameter $x$ exists if and only if there are sets $A,B,C\subseteq \F_q^\ast$ with
$|A|=|B|=|C|=x$ and $A\cdot B\cap C=\varnothing$. This happens if and only if there exists $A,B\subset \F_q^\ast$ with $|A|=|B|=x$ and $ |A\cdot B|\le q-1-x.$\qedhere
\end{lemma}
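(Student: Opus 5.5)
The plan is to write down the general lines of $\PG(2,q)$ explicitly and check which triples of points of $S$ they contain. A general line has equation $\alpha X_1+\beta X_2+\gamma X_3=0$ with $\alpha\beta\gamma\neq 0$. Since we work projectively, we may scale so that $\alpha=1$. Such a line meets each side of the triangle in exactly one non-vertex point, so $|\ell\cap S|\le 3$ always. Hence $S$ fails to be a weak arc exactly when some general line contains one point from each of the three sides, namely $P_1(a)$, $P_2(b)$ and $P_3(c)$ with $a\in A$, $b\in B$, $c\in C$. Points of $S$ on the same side are never together on a general line, because a general line does not contain the side.

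The core step is the collinearity computation. The three points $(1,-a,0)$, $(0,1,b)$, $(1,0,c)$ are collinear if and only if
\[
\det\begin{pmatrix}1&-a&0\\0&1&b\\1&0&c\end{pmatrix}=c-ab=0 .
\]
One also has to check that the line through such a triple is general. The line through $P_1(a)$ and $P_3(c)$ contains neither $P_1=(1,0,0)$ nor $P_3=(0,0,1)$, since $a,c\neq0$ and the points are not vertices. It also avoids $P_2=(0,1,0)$, because that point lies on the side $X_3=0$, which the line meets only in $P_1(a)$. So the line is general. Conversely, every general line yields such a triple. Therefore $S$ is a weak arc if and only if there is no triple with $ab=c$, that is, if and only if $A\cdot B\cap C=\varnothing$.

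For the balanced statement, take $|A|=|B|=|C|=x$; the equivalence then follows directly from the first part. For the last equivalence, suppose $A\cdot B\cap C=\varnothing$. Then $C\subseteq\F_q^*\setminus A\cdot B$, so $x\le q-1-|A\cdot B|$. Conversely, if $|A\cdot B|\le q-1-x$, the complement $\F_q^*\setminus A\cdot B$ has at least $x$ elements, and we choose $C$ to be any $x$ of them.

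The only step needing care is the argument that each general line meets each side in exactly one point and that the triple-line is general. Everything else is a determinant and a counting remark.
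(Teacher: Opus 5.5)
Your proof is correct and follows the paper's argument: the same determinant computation $c-ab=0$ reduces the weak-arc condition to $A\cdot B\cap C=\varnothing$, and the final equivalence uses the same complement-counting remark for choosing $C$. You also check explicitly that a general line meets each side in exactly one non-vertex point and that the line through a collinear triple is general; the paper leaves both of these implicit.
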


\begin{proof}
The points $P_1(a),P_2(b)$ and $P_3(c)$ are collinear if and only if
\[
\det
\begin{pmatrix}
1&-a&0\\
0&1&b\\
1&0&c
\end{pmatrix}
=c-ab=0,
\]
that is, if and only if $c=ab$.

It follows that a general line contains three points of $S$, one on each side of the fundamental triangle, if and only if there exist
$a\in A$, $b\in B$, and $c\in C$ such that $c=ab$. Hence $S$ is a weak arc if and only if
$A\cdot B\cap C=\varnothing$. In particular, a balanced quasi-arc of parameter $x$ exists if and only if there are sets
$A,B,C\subseteq\F_q^\ast$ with
$|A|=|B|=|C|=x$
and $A\cdot B\cap C=\varnothing$.

Finally, for fixed $A,B$, a set $C\subseteq\F_q^\ast\setminus(A\cdot B)$ of size $x$ exists if and only if
\[
q-1-|A\cdot B|\ge x.
\]
\end{proof}

\begin{remark}
Taking
$A=B=\F_q^\ast$ and $C=\varnothing$ yields a weak arc with $2(q-1)$ points on two of the three sides of the fundamental triangle. It is easy to see that this set of size $2q+1$ is the largest possible weak arc whose points are contained on the sides of a triangle. Asking that the sets $A,B,C$ are non-empty, or that the set is a balanced quasi-arc makes the problem non-trivial.
\end{remark}

In what follows, we apply Kneser's theorem to the multiplicative group
$G=\F_q^\ast$.
For a subset $S\subseteq G$, we denote by $\Stab(S):=\{g\in G:gS=S\}$ its stabiliser in $G$. In particular,
\[
\Stab(A\cdot B)=\{g\in G:g(A\cdot B)=A\cdot B\}.
\].

\begin{lemma} \cite{Kneser}\label{lem:kneser}
Let $G$ be a finite abelian group and $A,B\subseteq G$ nonempty. Let
$H=\Stab(A\cdot B)$. Then 
\[
  |A\cdot B|\;\ge\;|A\!\cdot\! H|+|B\!\cdot\! H|-|H|\;\ge\;|A|+|B|-|H|.
\]
and $A\cdot B$ is a union of cosets of $H$. In particular $|H|\mid|A\cdot B|$.
\end{lemma}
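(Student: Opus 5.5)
The statement to prove is Kneser's theorem for the finite abelian group $G$: with $H=\Stab(A\cdot B)$, $A\cdot B$ is a union of $H$-cosets, $|A\cdot B|\ge |AH|+|BH|-|H|$, and the cruder bound follows. The periodicity claims are formal. If $h\in H$ and $x\in A\cdot B$, then $hx\in A\cdot B$, so $A\cdot B=(A\cdot B)H$. Hence $A\cdot B$ is a union of $H$-cosets and $|H|$ divides $|A\cdot B|$. Moreover $(AH)(BH)=(A\cdot B)H=A\cdot B$. Since $A\subseteq AH$, we also get $|AH|+|BH|-|H|\ge |A|+|B|-|H|$.

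So the content is the first inequality. I will first reduce it to a statement in $G/H$. Let $\bar A,\bar B$ be the images of $A,B$ in $G/H$. The product $\bar A\bar B$ has trivial stabiliser, because $H$ is the full stabiliser of its preimage $A\cdot B$. In the quotient the inequality becomes $|\bar A\bar B|\ge|\bar A|+|\bar B|-1$. This says that a product set in an abelian group with trivial stabiliser satisfies the Cauchy--Davenport type bound.

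I will prove the stronger form by induction on $|B|$, in the standard version: for nonempty $A,B$ in an abelian group, with $H=\Stab(AB)$, one has $|AB|\ge|AH|+|BH|-|H|$. The tool is the Dyson $e$-transform. Pick $a\in A$, $b\in B$ and replace $B$ by $b^{-1}B$, so that $1\in B$. For $e\in A$ set $A(e)=A\cup eB$ and $B(e)=B\cap e^{-1}A$. Then $A(e)B(e)\subseteq AB$ and $|A(e)|+|B(e)|=|A|+|B|$. If for some $e$ the set $B(e)$ is a proper nonempty subset of $B$, induction applies to $(A(e),B(e))$. It yields a subgroup $H'=\Stab(A(e)B(e))$ with
\[
|AB|\ge|A(e)B(e)|\ge|A(e)H'|+|B(e)H'|-|H'|.
\]
Otherwise $AB=A$ for every such translate, so $A$ is a union of cosets of the subgroup generated by $B$. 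In that case the bound is immediate: $B$ lies in the stabiliser, so $BH=H$ and $AH=A=AB$.

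The main obstacle is the case where the induction produces $H'$, which may differ from $H$. We must pass from the bound for $A(e)B(e)$ with $H'$ back to $AB$ with $H$. The classical fix, due to Kneser, with the short versions by Kemperman and DeVos, has two steps. First choose $e$ so that $|A(e)B(e)|$ is minimal, or argue by contradiction on a minimal counterexample. Then show that if $AB\not\supseteq A(e)B(e)H'$ or $H'\ne H$, one can apply the inductive bound to the pair $(A\cap gH',\,B\cap g'H')$ and sum over cosets, comparing $|AB\setminus A(e)B(e)H'|$ with the coset deficits. I would follow DeVos's proof, which runs the induction on $|A|+|B|$ and handles this merging step by a counting argument over $H'$-cosets. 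Since the paper cites \cite{Kneser}, the honest plan is to verify the easy structural parts above and invoke this classical argument for the inequality.
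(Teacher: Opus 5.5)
Your proposal matches the paper, which gives no proof of this lemma and simply cites Kneser's theorem: your verification of the $H$-periodicity of $A\cdot B$, of $|H|\mid|A\cdot B|$, of the second inequality (from $A\subseteq AH$, $B\subseteq BH$) and of the reduction to trivial stabiliser in $G/H$ is correct, and deferring the first inequality to the classical argument is exactly what the paper does. One slip in your sketch of that argument: since $A(e)B(e)H'=A(e)B(e)\subseteq AB$ always, the condition ``$AB\not\supseteq A(e)B(e)H'$'' is vacuous; the real dichotomy is $A(e)B(e)=AB$ (so $H'=H$) versus $A(e)B(e)\subsetneq AB$, where one must bound $|AB\setminus A(e)B(e)|$ from below.
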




\begin{proposition}\label{cor:odd}
Let $q$ be odd. The largest balanced quasi-arcs have parameter
\[
  a=\frac{q-1}{2},
\]
and this maximum is attained by taking $A=B$ to be the subgroup of squares in $\F_q^*$ and $C$ the set of
nonsquares in $\F_q^*$. Moreover, every balanced quasi-arc with parameter $\frac{q-1}{2}$ can be obtained via this construction, or by taking $A=B=C$ the non-squares in $\F_q^*$.
\end{proposition}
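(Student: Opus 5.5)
The plan has two parts: existence together with the upper bound, and then the characterisation via Kneser's theorem. Lemma \ref{prop:univ} already gives $a\le (q-1)/2$, so for the maximum it only remains to exhibit the construction. Let $Q$ be the subgroup of squares in $\F_q^\ast$, of size $(q-1)/2$, and let $N$ be its complement, the nonsquares. Then $Q\cdot Q=Q$, which is disjoint from $N$, so by Lemma \ref{prop:reduction} the sets $A=B=Q$, $C=N$ give a balanced quasi-arc of parameter $(q-1)/2$. Similarly $N\cdot N=Q$ is disjoint from $N$, so $A=B=C=N$ also works.

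For the characterisation, put $x=(q-1)/2$ and take $A,B,C$ of size $x$ with $A\cdot B\cap C=\varnothing$. Then $|A\cdot B|\le q-1-x=x$. Since $|A\cdot B|\ge|A|=x$ trivially, we get $|A\cdot B|=x$ and $C=\F_q^\ast\setminus(A\cdot B)$. Let $H=\Stab(A\cdot B)$ and apply Lemma \ref{lem:kneser}. Since $|A\cdot B|=x<q-1$, we have $H\neq\F_q^\ast$. The subgroups of the cyclic group $\F_q^\ast$ are indexed by divisors, and $|H|$ divides $|A\cdot B|=(q-1)/2$, so $|H|\le (q-1)/2$. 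Kneser gives
\[
x=|A\cdot B|\ge |AH|+|BH|-|H|\ge 2x-|H|,
\]
so $|H|\ge x$. Hence $H=Q$, the unique subgroup of index $2$. It follows that $A\cdot B$ is a single coset of $Q$. Moreover $|AH|+|BH|-|H|\le x$ with $|AH|,|BH|\ge x$ forces $|AH|=|BH|=x$, so $A$ and $B$ are each contained in a single coset of $Q$. Having size $x$, each of them equals that coset.

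The remaining step is a case check over the four possibilities $A,B\in\{Q,N\}$:
\begin{itemize}
\item $A=B=Q$ gives $C=N$, which is the stated construction.
\item $A=B=N$ gives $A\cdot B=Q$, so $C=N$, which is the second family.
\item $A=Q,B=N$ or $A=N,B=Q$ gives $A\cdot B=N$, so $C=Q$.
\end{itemize}
The mixed case produces a configuration not explicitly named in the statement, so I must either show it is equivalent to one of the two listed ones or observe that the statement lists representatives up to this symmetry.

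That reconciliation is the main obstacle. I will look for a collineation fixing the triangle and acting on the parameters, for instance scaling coordinates by a nonsquare, or permuting the roles of the sides. Such a map should transform $(A,B,C)$ by multiplying two of the three sets by a nonsquare, so that $(Q,N,Q)$ maps to $(Q,Q,N)$ or to $(N,N,N)$. I expect that coordinate scalings $X_i\mapsto\lambda_iX_i$ send $a\mapsto a\lambda_2/\lambda_1$, $b\mapsto b\lambda_3/\lambda_2$ and $c\mapsto c\lambda_3/\lambda_1$. This preserves the relation $c=ab$ and multiplies $A,B,C$ by $u,v,uv$. Taking $u$ a nonsquare and $v=1$ sends $(Q,N,Q)$ to $(N,N,N)$. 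Likewise, taking $u=1$ and $v$ a nonsquare sends $(N,Q,Q)$ to $(N,N,N)$. So, up to projective equivalence, every maximal configuration is one of the two stated ones, and I will phrase the conclusion in these terms.
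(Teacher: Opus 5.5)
Your proposal is correct and follows essentially the paper's route: Kneser's theorem applied to $\Stab(A\cdot B)$ forces this stabiliser to be the subgroup of squares, so $A\cdot B$, $A$, $B$ and $C$ are all cosets of it. The only difference in the main argument is immaterial: you get $A$ and $B$ as cosets from $|AH|=|BH|=\frac{q-1}{2}$, while the paper uses $Ab_0\subseteq A\cdot B$. Your diagonal-collineation step $(A,B,C)\mapsto(uA,vB,uvC)$ is a welcome addition. The paper's proof also ends with the mixed cases (two coset representatives square, one nonsquare) but never reconciles them with the two configurations named in the statement. Taking $u=v$ a nonsquare even sends $(Q,Q,N)$ to $(N,N,N)$, so all four configurations are projectively equivalent, and the statement is only literally right when read up to this equivalence.
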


\begin{proof}
First let $A=B$ be the index-$2$ subgroup of order
$\frac{q-1}{2}$ in $\F_q^*$ (that is, the squares) and $C$ be the non-squares, then it is immediate that $A\cdot B\cap C=\emptyset$ and we have a balanced quasi-arc with parameter $\frac{q-1}{2}$. 
Now let $A,B,C$ be subsets of $\F_q^*$ with $x=|A|=|B|=|C|=\frac{q-1}{2}$ and $A\cdot B\cap C=\emptyset$. This immediately implies that $|A\cdot B|+|C|\leq q-1,$ so $|A\cdot B|\leq \frac{q-1}{2}.$ 

On the other hand, writing $H=\Stab(A\cdot B)$, Lemma~\ref{lem:kneser} gives
\[
  |A\cdot B|\ \ge\ |A|+|B|-|H|\ =\ (q-1)-|H|. 
\]
We find that
$  (q-1)-|H|\ \le\ \tfrac{q-1}{2}$, so 
  $|H|\ \ge\ \tfrac{q-1}{2}$.

Since $H$ is a subgroup of $\F_q^*$, we know that $|H|$ divides $q-1$. We first rule out the possibility that $|H|=q-1$ (that is, $H=\F_q^*$). In that case, for any element $s\in A\cdot B$, and every element $h\in \F_q^\ast$ we have that $sh\in A\cdot B$, which shows that $\F_q^*\subseteq A\cdot B$, a contradiction since $|A\cdot B|\leq \frac{q-1}{2}.$ Hence, $|H|=\tfrac{q-1}{2}$ and $H$ is the unique index-$2$ subgroup in $\F_q^*$ (given by the squares). Furthermore, by Lemma \ref{lem:kneser}, we know that $A\cdot B$ is a union
of $H$-cosets, which, since $|A\cdot B|\leq \frac{q-1}{2}$ implies that it is a single $H$-coset. Since $A\cdot B$ is a single $H$-coset, we can write $A\cdot B=tH$. Fixing any
$b_0\in B$, every $a\in A$ satisfies $ab_0\in A\cdot B=tH$, so
$A\subseteq tb_0^{-1}H$: the set $A$ lies in a single coset of $H$. As
$|A|=\frac{q-1}{2}=|H|$, $A$ is exactly that coset.
In the same way, we see that $B$ is a single
full coset of $H$ too. Since $C$ is the complement of $A\cdot B$, and $|C|=|A\cdot B|=\frac{q-1}{2}$, the elements of $C$ are those of the coset of $H$ different from the coset of $H$ given by $A\cdot B$.
We conclude that each of the sets $A,B,C$ are cosets of $H$. We can write those as $aH,bH,cH$ where $a,b,c$ are fixed elements in $A,B,C$ respectively, such that exactly two of $\{a,b,c\}$ are square and one is non-square, or all three of $a,b,c$ are non-square.
\end{proof}

When $q$ is even, $q-1$ is odd, so $\F_q^*$ does not have an index $2$ subgroup. The largest balanced quasi-arcs in this case can be attained by taking a {\em union} of several consecutive cosets of a proper subgroup. We first show the following lemma which is valid for general cyclic groups.

\begin{lemma}\label{lem:minprod}
Let $G$ be cyclic of order $n$ and $1\le a\le n$. Then
\[
  \min_{|A|=|B|=a}|A\cdot B|
  \;=\;\min_{d\mid n}\ d\bigl(2\lceil a/d\rceil-1\bigr).
\]
\end{lemma}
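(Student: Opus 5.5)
We prove the two inequalities separately. We work in $G\cong\mathbb Z_n$, written additively, so that $A\cdot B$ becomes the sumset $A+B$.

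\emph{Upper bound (construction).} Fix a divisor $d\mid n$, let $H$ be the unique subgroup of order $d$ in $G$, and put $k=\lceil a/d\rceil$. Pick a generator $g$ of the cyclic quotient $G/H$, which has order $n/d$. Take $A=B$ to be any $a$-element subset of
\[
  X=H\cup gH\cup\cdots\cup g^{k-1}H.
\]
This is possible since $|X|=\min(kd,n)\ge a$. Then $A\cdot B\subseteq X\cdot X=\bigcup_{i=0}^{2k-2}g^iH$, which has at most $d(2k-1)$ elements. If $2k-1>n/d$, the cosets wrap around and the bound is still valid, since $|A\cdot B|\le n\le d(2k-1)$. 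Minimising over $d$ gives
\[
  \min_{|A|=|B|=a}|A\cdot B|\le\min_{d\mid n} d\bigl(2\lceil a/d\rceil-1\bigr).
\]

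\emph{Lower bound (Kneser).} Let $|A|=|B|=a$ and set $H=\Stab(A\cdot B)$, with $d=|H|$, so $d\mid n$. By Lemma~\ref{lem:kneser},
\[
  |A\cdot B|\ge |A\cdot H|+|B\cdot H|-|H|.
\]
Now $A\cdot H$ is a union of $H$-cosets containing $A$, so it consists of at least $\lceil a/d\rceil$ cosets, and $|A\cdot H|\ge d\lceil a/d\rceil$. The same holds for $B$. Hence
\[
  |A\cdot B|\ge d\bigl(2\lceil a/d\rceil-1\bigr)\ge\min_{d'\mid n}d'\bigl(2\lceil a/d'\rceil-1\bigr).
\]

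Combining the two bounds gives equality. The case $d=n$ gives the value $n$, which covers the trivial situation where $A\cdot B=G$, so there is no separate case to handle.

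The step I expected to be the obstacle is making sure Kneser's inequality yields exactly the coset-rounded form $|A\cdot H|\ge d\lceil a/d\rceil$, rather than only the weaker $|A|+|B|-|H|$. Lemma~\ref{lem:kneser} as stated already includes the stronger first inequality, so the argument goes through directly.
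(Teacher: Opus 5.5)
Your proof is correct and follows essentially the same approach as the paper: the lower bound comes from Kneser's theorem with $H=\Stab(A\cdot B)$ and $|A\cdot H|\ge d\lceil a/d\rceil$, and the upper bound comes from taking $A=B$ inside $\lceil a/d\rceil$ consecutive cosets of the order-$d$ subgroup, so that $A\cdot B$ lies in at most $2\lceil a/d\rceil-1$ cosets. Your version is slightly cleaner in two places: you use directly that $A\cdot H=\bigcup_{x\in A}xH$ is a union of $H$-cosets, avoiding the paper's detour through $\Stab(A\cdot H)$, and your bound $|X\cdot X|\le d(2k-1)$ holds for every divisor, so you do not need to treat the wrap-around case separately.
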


\begin{proof}
We write $F(a)=\min_{d\mid n}d\bigl(2\lceil a/d\rceil-1\bigr)$ for the right-hand
side. 

We first show that $\min_{|A|=|B|=a}|A\cdot B|
  \;\geq\;\min_{d\mid n}\ d\bigl(2\lceil a/d\rceil-1\bigr)$ by showing that for all sets $A,B$ with $|A|=|B|=a$, there exists a divisor $d$ of $n$ such that  $|A\cdot B|\ge d\bigl(2\lceil a/d\rceil-1\bigr)$.
So consider sets $A,B$ with $|A|=|B|=a$ and set $S=A\cdot B$, $H=\Stab(S)$ and $d=|H|$; then $d\mid n$ since $H$ is a subgroup of $G$. 
Since $H\cdot H=H$ we have $(A\cdot H)\cdot H=A\cdot H$, so $H$ is contained in
$K:=\Stab(A\cdot H)$; by Lemma \ref{lem:kneser} the set $A\cdot H$ is a disjoint
union of $K$-cosets, and as $H\le K$ each $K$-coset is in turn a disjoint union of
$H$-cosets, so $A\cdot H$ is a disjoint union of $H$-cosets, each of size exactly $d=|H|$. Say it consists of $k$ such cosets, so $|A\cdot H|=kd$. Since $e\in H$
we have $A\subseteq A\cdot H$, so these $k$ cosets cover all $a$ elements of $A$;
and each coset has size $d$, so $kd\ge a$. It follows that $k\ge\lceil a/d\rceil$ and
\[
  |A\cdot H|=kd\ \ge\ d\,\lceil a/d\rceil .
\]
Similarly, we find that $|B\cdot H|\ge\ d\,\lceil a/d\rceil$. Lemma \ref{lem:kneser} now gives $  |A\cdot B|\ \ge\ 2\,d\lceil a/d\rceil-d\ =\ d\bigl(2\lceil a/d\rceil-1\bigr).$

We now show that $\min_{|A|=|B|=a}|A\cdot B|
\leq\min_{d\mid n}\ d\bigl(2\lceil a/d\rceil-1\bigr)$ by showing that for all $1\leq a\leq n$, and for each divisor $d\mid n$ with $2\lceil a/d\rceil-1\le
n/d$, we can construct a pair $A,B$ with $|A|=|B|=a$ and $|A\cdot B|\le d\bigl(2\lceil a/d\rceil-1\bigr)$. Adding the condition that $2\lceil a/d\rceil-1\le
n/d$ is not a restriction, since for a divisor $d$ with $2\lceil a/d\rceil-1>
n/d$, all sets with $|A|=|B|=a$ satisfy $|A\cdot B|\leq d\bigl(2\lceil a/d\rceil-1\bigr)$ as $\bigl(2\lceil a/d\rceil-1\bigr)>n$.

So fix $1\leq a\leq n$ and a divisor $d$ of $n$ with with $2\lceil a/d\rceil-1\le
n/d$. Let $H=\langle g^{n/d}\rangle$ be the unique subgroup of order $d$, where $g$ is a generator of $G$. The quotient $G/H$ is cyclic of order $N:=n/d$.
Writing $m=\lceil a/d\rceil$, we know that $2m-1\le N$ and from
$m=\lceil a/d\rceil$ we have $d(m-1)<a\le dm$. We see that $r:=a-(m-1)d$ satisfies
$1\le r\le d$ and we can take
\[
  A=B=(g^0H)\cup\cdots\cup(g^{m-2}H)\ \cup\ R,
\]
where $R\subseteq g^{m-1}H$ is any $r$-element subset. Then $|A|=(m-1)d+r=a$, and
the powers of $g$ appearing as coset leaders in the set $A$ are exactly $I=\{0,1,\dots,m-1\}$. Therefore, every element in $A\cdot B$ is contained in some coset with coset leader
in \[
  I+I=\{0,1,\dots,2m-2\}\pmod N .
\]
Since $2m-2\le N-1$, this set has exactly $2m-1$ elements, and $A\cdot B$ lies in the union of $2m-1$ cosets of $H$, giving
$|A\cdot B|\le d(2m-1)=d\left(2\lceil \frac{a}{d}\rceil-1\right)$. 
\end{proof}

\begin{theorem}\label{thm:main}
Let $q$ be a prime power and $n=q-1$. Let $S$ be a balanced quasi-arc of parameter
$a$ in $\PG(2,q)$. Then   $a\ \le\ a_{\max}$ where
\[
  a_{\max}\ =\ \max_{\substack{d\mid n,\ m\ge 1\\ d(3m-2)<n}}
  \ \min\bigl(dm,\ n-d(2m-1)\bigr).
\]
Moreover the bound is sharp. A balanced quasi-arc of parameter
$a_{\max}$ can be obtained as follows: let $(d,m)$ be the parameters defining $a_{\max}$
and let $H\le \F_q^\ast$ be the subgroup of order $d$ with $g$
a generator of $\F_q^\ast$. Take $A=B$ to consist of the $m-1$ cosets
$g^0H,\dots,g^{m-2}H$ together with an arbitrary $\bigl(a_{\max}-(m-1)d\bigr)$-element
subset of the $m$-th coset $g^{m-1}H$. Then
$|A\cdot B|=d(2m-1)\le n-a_{\max}$, and $C$ may be taken to be any $a_{\max}$-subset of
$\F_q^\ast\setminus(A\cdot B)$.
\end{theorem}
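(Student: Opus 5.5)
Combining Lemma~\ref{prop:reduction} with Lemma~\ref{lem:minprod} turns the question into an optimisation over divisors. By Lemma~\ref{prop:reduction}, a balanced quasi-arc of parameter $a$ exists if and only if $F(a):=\min_{|A|=|B|=a}|A\cdot B|\le n-a$, where $n=q-1$. By Lemma~\ref{lem:minprod}, $F(a)=\min_{d\mid n} d(2\lceil a/d\rceil-1)$, with $\F_q^\ast$ cyclic of order $n$. So $a$ is feasible if and only if there is a divisor $d\mid n$ with
\[
d\bigl(2\lceil a/d\rceil-1\bigr)+a\le n .
\]
Fix such a $d$ and write $m=\lceil a/d\rceil\ge 1$. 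Then $d(m-1)<a\le dm$, and $a\le n-d(2m-1)$. Hence $a\le \min\bigl(dm,\,n-d(2m-1)\bigr)$.

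Next I check that the pair $(d,m)$ satisfies the side condition $d(3m-2)<n$. From $a>d(m-1)$ and $a\le n-d(2m-1)$ we get $d(m-1)<n-d(2m-1)$, which is exactly $d(3m-2)<n$. Thus every feasible $a$ is bounded by one of the terms in the maximum, so $a\le a_{\max}$.

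For sharpness, take $(d,m)$ attaining the maximum and set $a=a_{\max}=\min(dm,n-d(2m-1))$. Since $d(3m-2)<n$, we have $n-d(2m-1)>d(m-1)$, and trivially $dm>d(m-1)$. Hence $d(m-1)<a\le dm$, so $\lceil a/d\rceil=m$ and $1\le r:=a-(m-1)d\le d$. The bound $a\le n-d(2m-1)$ also gives $2m-1\le n/d$. So the explicit construction in the second half of the proof of Lemma~\ref{lem:minprod} applies, with $A=B$ equal to $g^0H\cup\dots\cup g^{m-2}H$ together with an $r$-subset of $g^{m-1}H$. This gives $|A\cdot B|\le d(2m-1)\le n-a$, and then $C$ is any $a$-subset of the complement, which is what Lemma~\ref{prop:reduction} requires.

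To get the stated equality $|A\cdot B|=d(2m-1)$ I would use the lower bound from Lemma~\ref{lem:minprod}, or argue directly that each of the $2m-1$ cosets $g^kH$, $0\le k\le 2m-2$, is hit. For $k\le 2m-3$, a product of a full coset with an element of $A$ fills the whole coset. For $k=2m-2$ with $m\ge 2$, use $g^{m-1}H$-elements of $R$ times the full coset $g^{m-1}H$. For $m=1$ there is only one coset, but then $|A\cdot B|$ may be smaller than $d$. That is harmless, since only the inequality $|A\cdot B|\le n-a$ matters, and I would remark that the claimed equality needs this caveat.

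The main obstacle is the bookkeeping of the side condition. One has to confirm that restricting to $d(3m-2)<n$ discards no feasible pair, and conversely that every pair in the maximum satisfies $\lceil a/d\rceil=m$, so that the formula of Lemma~\ref{lem:minprod} really corresponds to the same $m$. Both directions are handled by the single inequality $d(m-1)<n-d(2m-1)$.
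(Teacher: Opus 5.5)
Your proof of $a\le a_{\max}$ and of sharpness (with $|A\cdot B|\le d(2m-1)\le n-a$) is correct and follows the paper's argument via Lemmas~\ref{prop:reduction} and~\ref{lem:minprod}. You also make explicit the checks $\lceil a/d\rceil=m$, $1\le r\le d$ and $2m-1\le n/d$, which the paper only asserts.

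The part that goes wrong is your treatment of the stated equality $|A\cdot B|=d(2m-1)$. The paper's proof itself only records the inequality. Since $2m-1<n/d$, there is no wrap-around, so the only products landing in the top coset $g^{2m-2}H$ are those in $R\cdot R$. Your step ``elements of $R$ times the full coset $g^{m-1}H$'' therefore presupposes $R=g^{m-1}H$, which you never establish. Your fallback through the lower bound of Lemma~\ref{lem:minprod} does not work either, because that bound is a minimum over all divisors. For example, for $q=64$ both $(1,21)$ and $(21,1)$ attain $a_{\max}=21$. For the $(1,21)$ construction the lemma only gives $|A\cdot B|\ge 21$, while the product set actually has $41$ elements. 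Finally, your proposed caveat for $m=1$ is mistaken.

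The missing observation is that $N=n/d$ is an integer. The condition $d(3m-2)<n$ means $N>3m-2$, hence $N\ge 3m-1$. This gives $n-d(2m-1)=d(N-2m+1)\ge dm$, so on every admissible pair the minimum equals $dm$. Consequently $a_{\max}=dm$ and $r=a_{\max}-(m-1)d=d$: the ``arbitrary subset'' of $g^{m-1}H$ is the whole coset. Then $A=B=g^0H\cup\dots\cup g^{m-1}H$, and $A\cdot B=g^0H\cup\dots\cup g^{2m-2}H$ is a union of $2m-1\le N$ distinct full cosets. So $|A\cdot B|=d(2m-1)$ holds exactly for every maximising pair, including $m=1$, where $A=B=A\cdot B=H$. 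The statement needs no caveat.
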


\begin{proof} Let $S$ be a balanced quasi-arc of parameter
$a$ in $\PG(2,q)$. By Lemma~\ref{prop:reduction}, $\min_{|A|=|B|=a}|A\cdot B|\le n-a$, which by
Lemma~\ref{lem:minprod} is equivalent to the existence of a divisor $d$ of $n$ such that 
\begin{align}d\bigl(2\lceil a/d\rceil-1\bigr)\ \le\ n-a \label{1}.\end{align}
Now set $m=\lceil a/d\rceil$, so that
  $d(m-1)<a\le dm$, and \eqref{1} reads $d(2m-1)\le n-a$, so $a\le n-d(2m-1)$. We see that
\begin{align}
  a\ \le\ \min\bigl(dm,\ n-d(2m-1)\bigr). \label{2}
\end{align}
Furthermore, the lower bound $a>d(m-1)$ forces $\min\bigl(dm,\ n-d(2m-1)\bigr)>d(m-1)$. Since $dm>d(m-1)$, this holds if
and only if $
  n-d(2m-1)>d(m-1)$, that is, if and only if $
  n>d(3m-2)$. 
We now show that the bound is sharp. Let $(d,m)$ realise the maximum $a_{\max}$; we have $d(3m-2)<n$. Let $H\le \F_q^\ast$ be the subgroup of
order $d$ with generator $g$ and index $N=n/d$. We have that $r:=a-(m-1)d$ satisfies $1\le r\le d$. As in the proof of Lemma \ref{lem:minprod}, we can
 set
$
  A=B=(g^0H)\cup\cdots\cup(g^{m-2}H)\ \cup\ R,\qquad R\subseteq g^{m-1}H$ where $|R|=r$,
and find that $  |A\cdot B|\leq d(2m-1)\leq n-a$, so we can take a subset $C$ of size $a$ disjoint from $A\cdot B$ in $\F_q^\ast $ to construct a balanced quasi-arc of parameter $a=a_{\max}$.
\end{proof}

We can use the previous theorem to recover the bound for odd order $q$ established in Proposition \ref{cor:odd}. However, unlike Proposition \ref{cor:odd} the following corollary does not characterise the largest balanced quasi-arcs.

\begin{corollary}
Let $q$ be odd, so $n=q-1$ is even. Then
\[
  a_{\max}(q)=\frac{q-1}{2},
\]
attained at $(d,m)=\bigl(\tfrac{q-1}{2},\,1\bigr)$. This can be constructed by taking $A=B=H$ the
index-$2$ subgroup (the squares) and $C=\F_q^\ast\setminus H$.
\end{corollary}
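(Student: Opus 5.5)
The plan is to obtain the statement directly from Theorem~\ref{thm:main} together with Lemma~\ref{prop:univ}, and to check the construction via Lemma~\ref{prop:reduction}.

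For the upper bound, Lemma~\ref{prop:univ} already gives $a\le\lfloor (q-1)/2\rfloor=(q-1)/2$ for every balanced quasi-arc, since $q$ is odd. Alternatively, staying inside the framework of Theorem~\ref{thm:main}, take any admissible pair $(d,m)$ with $d\mid n$, $m\ge1$ and $d(3m-2)<n$. Then
\[
\min\bigl(dm,\ n-d(2m-1)\bigr)\le \tfrac{1}{2}\bigl(dm+n-d(2m-1)\bigr)=\tfrac{1}{2}\bigl(n-d(m-1)\bigr)\le \tfrac{n}{2}.
\]
This gives $a_{\max}(q)\le n/2=(q-1)/2$ with no further case analysis.

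For the lower bound, I plug in $(d,m)=(n/2,1)$, which is legitimate because $n$ is even, so $n/2$ divides $n$. The admissibility condition reads $d(3m-2)=n/2<n$, which holds. The value is $\min(n/2,\ n-n/2)=n/2$, so $a_{\max}(q)\ge (q-1)/2$, and equality follows.

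For the explicit construction, I follow the recipe of Theorem~\ref{thm:main} with $m=1$. There are then no full cosets $g^0H,\dots,g^{m-2}H$, and the partial coset is an $(a_{\max}-0)$-subset of $g^0H=H$, i.e.\ all of $H$. Here $H$ is the subgroup of order $(q-1)/2$, namely the squares, so $A=B=H$. Then $A\cdot B=H$ has size $d(2m-1)=(q-1)/2=n-a_{\max}$, and $C$ is forced to be the full complement $\F_q^\ast\setminus H$, the non-squares. Lemma~\ref{prop:reduction} confirms this is a balanced quasi-arc, consistent with Proposition~\ref{cor:odd}.

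No step is a real obstacle. The only point needing care is the degenerate indexing $m=1$ in the construction of Theorem~\ref{thm:main}, which I check explicitly as above.
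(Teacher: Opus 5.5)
Your proposal is correct and follows the paper's proof essentially verbatim. The bound $\min(x,y)\le (x+y)/2$ over admissible pairs gives $a_{\max}(q)\le n/2$, and the pair $(d,m)=(n/2,1)$ attains it; the $m=1$ case of the construction in Theorem~\ref{thm:main} then reduces to $A=B=H$ and $C=\F_q^\ast\setminus H$. Your explicit check of admissibility ($d(3m-2)=n/2<n$) and of the value $\min(n/2,n/2)=n/2$ fills in a step the paper leaves implicit, and your alternative upper bound via Lemma~\ref{prop:univ} is also valid.
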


\begin{proof}
For any admissible pair $(d,m)$ the elementary inequality $\min(x,y)\le\frac{x+y}{2}$ gives
\[
  \min\bigl(dm,\ n-d(2m-1)\bigr)
  \ \le\ \frac{dm+\bigl(n-d(2m-1)\bigr)}{2}
  \ =\ \frac{n-d(m-1)}{2}\ \le\ \frac{n}{2},
\]
so $a_{\max}(q)\le\frac{n}{2}=\frac{q-1}{2}$.

Since $n=q-1$ is even, $d=\frac{q-1}{2}$ is a
divisor of $n$. The construction of Theorem \ref{thm:main} for $m=1,d=n/2$ then reduces to one described in the statement of the corollary.
\end{proof}

\begin{remark}
If $q$ is even, so $n=q-1$ is odd, there is no index-$2$ subgroup, and one might guess the extremal $A=B$ is found by using a single subgroup, giving $a_{\max}=\frac{q-1}{p_{\min}}$ with
$p_{\min}$ the least prime factor of $q-1$. However, this is in general not optimal: taking $m\ge 2$
cosets of a different subgroup may do strictly better. For example, consider $q=16$, $n=15$: the single-subgroup construction using the index $3$ subgroup gives a balanced quasi-arc with $a=15/3=5$; but using two cosets of the index $5$-subgroup $d=3$, $m=2$
  yields $a=\min(6,\,15-9)=6$ which, given Theorem \ref{thm:main}, is best possible.
\end{remark}

\subsection{Weak arcs and weak caps in three dimensions}\label{subs:higher}
\subsubsection{A weak cap contained in the edges of the fundamental tetrahedron}
In this section, we construct a weak cap in $\PG(3,q)$, contained on the edges of the fundamental tetrahedron. The point set is not a weak arc since there are still planes with four points of this set. In Section \ref{subsection:subset}, we will show how to take a subset of this point set that is a balanced quasi-arc.

\begin{proposition}\label{prop:cap}
Let $H$ be a multiplicative 
subgroup of $\F_q^*$, and let
\[
K:=\F_q^*\setminus H.
\]
In $\PG(3,q)$, let
\[
\begin{aligned}
\mathcal S_{12}&=\{(1,-a,0,0):a\in H\},\\
\mathcal S_{13}&=\{(1,0,b,0):b\in K\},\\
\mathcal S_{14}&=\{(1,0,0,c):c\in H\},\\
\mathcal S_{23}&=\{(0,1,d,0):d\in H\},\\
\mathcal S_{24}&=\{(0,1,0,e):e\in K\},\\
\mathcal S_{34}&=\{(0,0,1,-f):f\in H\},
\end{aligned}
\]
and let
\[
\mathcal S=
\{P_1,P_2,P_3,P_4\}
\cup
\bigcup_{1\le i<j\le 4}\mathcal S_{ij},
\]
where
\[
P_1=(1,0,0,0),\quad
P_2=(0,1,0,0),\quad
P_3=(0,0,1,0),\quad
P_4=(0,0,0,1).
\]
Then  $\mathcal S$ is a weak cap of size $2(q+1+|H|).$
\end{proposition}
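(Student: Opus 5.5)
The plan has two parts. I will first count the points of $\mathcal S$, then show that no line avoiding $P_1,\dots,P_4$ contains three points of $\mathcal S$. The count is immediate. The six sets $\mathcal S_{ij}$ lie on distinct edges and avoid the vertices, so they are pairwise disjoint. Four of them have size $|H|$ and two have size $|K|=q-1-|H|$. Hence
\[
|\mathcal S|=4+4|H|+2(q-1-|H|)=2(q+1+|H|).
\]

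For the weak cap property, let $\ell$ be a line through none of the fundamental points. Then every point of $\ell\cap\mathcal S$ is a non-vertex point of an edge $P_iP_j$. Since $\ell$ is not an edge, it meets each edge in at most one point. Suppose $\ell$ meets three distinct edges. There are three cases for the combinatorial type of three edges of the tetrahedron.
\begin{itemize}
\item \emph{Star at a vertex} $P_i$. Two of the intersection points span $\ell$, which then lies in the face $\langle P_i,P_j,P_k\rangle$. The third edge $P_iP_l$ meets that face only in $P_i$, a contradiction.
\item \emph{Path} $P_iP_j$, $P_jP_k$, $P_kP_l$. The first two points force $\ell\subset\langle P_i,P_j,P_k\rangle$. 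The edge $P_kP_l$ meets this plane only in $P_k$, a contradiction.
\item \emph{Face triangle.} This is the only remaining case, and it cannot be excluded this way.
\end{itemize}
The same reasoning rules out $\ell$ meeting four edges. So the problem reduces to the four planar problems in the faces of the tetrahedron. In each face, the condition is exactly the planar collinearity condition of Lemma~\ref{prop:reduction}.

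The main step is then to compute, for each face, the $3\times 3$ determinant of the three edge points in the relevant coordinates, and to check that the collinearity relation cannot hold. Here I use that $H$ is a subgroup, so $H\cdot H=H$ and $H\cdot K=K$.
\begin{itemize}
\item Face $P_1P_2P_3$: the points $(1,-a,0),(0,1,d),(1,0,b)$ are collinear iff $b=ad$. But $ad\in H$ while $b\in K$.
\item Face $P_1P_2P_4$: the points $(1,-a,0),(0,1,e),(1,0,c)$ are collinear iff $c=ae$. But $ae\in K$ while $c\in H$.
\item Face $P_1P_3P_4$: the points $(1,b,0),(0,1,-f),(1,0,c)$ have determinant $c-bf$. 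But $bf\in K$ while $c\in H$.
\item Face $P_2P_3P_4$: the points $(1,d,0),(1,0,e),(0,1,-f)$ have determinant $df-e$. But $df\in H$ while $e\in K$.
\end{itemize}

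The only delicate point is getting the signs in these determinants right. This is exactly why the minus signs appear in $\mathcal S_{12}$ and $\mathcal S_{34}$: each relation must come out as a clean product $x=yz$ and not $x=-yz$, since $-1$ need not lie in $H$. With the relations in that form, the choice of which edges carry $H$ and which carry $K$ is precisely what makes every face condition fail. This shows that every line not through a fundamental point meets $\mathcal S$ in at most two points, which proves that $\mathcal S$ is a weak cap.
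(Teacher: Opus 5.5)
Your proof is correct and follows the paper's argument: first reduce to lines lying in a face of the tetrahedron, then check that each face's collinearity condition ($b=ad$, $c=ae$, $c=bf$, $e=df$) contradicts $H\cdot H=H$ and $H\cdot K=K$. Your star/path/triangle case split just makes explicit the paper's observation that two of any three edges lie in a common face, and your determinants and the count $4+4|H|+2(q-1-|H|)=2(q+1+|H|)$ agree with the paper's.
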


\begin{proof}
Note that if a line $\ell$ not containing a fundamental point contains three points of $\mathcal S$, then it meets three different edges of the tetrahedron. All sets of three edges of the tetrahedron contain at least two edges on the same face, and hence, we only need to consider lines contained in one of the four faces of the tetrahedron.

\textbf{Case 1:} $\ell$ meets the edges $P_1P_2$, $P_1P_3$, and $P_2P_3$. Let
$
X=(1,-a,0,0)\in\mathcal S_{12}$, 
$Y=(1,0,b,0)\in\mathcal S_{13}$,
$Z=(0,1,d,0)\in\mathcal S_{23}$, where $a,d\in H$ and $b\in K$.

These points are collinear if and only if $ad=b$.
Since $a,d\in H$, and $H$ is a multiplicative subgroup, we have $ad\in H$, contradicting $b\in K$.

\textbf{Case 2:} $\ell$ meets the edges $P_1P_2$, $P_1P_4$, and $P_2P_4$.
Similarly, we obtain the condition $ae=c$, a contradiction since $a\in H$, $e\in K$, $c\in H$.

\textbf{Case 3:} $\ell$ meets the edges $P_1P_3$, $P_1P_4$, and $P_3P_4$.
Now we find  $bf=c$, a contradiction since $b\in K$ and $f\in H$, $c\in H$.

\textbf{Case 4:} $\ell$ meets the edges $P_2P_3$, $P_2P_4$, and $P_3P_4$.
Now we find $df=e$, a contradiction since $d,f\in H$, $e\in K$.
Finally, the size of the set is given by $4|H|+2(q-1-|H|)+4=2(q+1+|H|).$
\end{proof}

If $q$ is odd, we can take $H$ to be the index $2$ subgroup of $\F_q^*$ and find the following:
\begin{corollary}
   If $q$ is odd, there exists a weak cap of size $3q+1$ whose points lie on the edges of the fundamental tetrahedron and every edge contains $\frac{q-1}{2}$ points different from the fundamental points.
\end{corollary}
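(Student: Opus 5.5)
This corollary is a direct specialisation of Proposition~\ref{prop:cap}. For $q$ odd, $\F_q^*$ is cyclic of even order $q-1$, so it has a unique subgroup of index $2$, namely the squares. I will take $H$ to be this subgroup. Then $|H|=\frac{q-1}{2}$ and $K=\F_q^*\setminus H$ is the set of non-squares, which also has $\frac{q-1}{2}$ elements.

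The proof then proceeds in three steps.

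\textbf{Weak cap property.} Proposition~\ref{prop:cap} holds for an arbitrary multiplicative subgroup $H$. Its only ingredients are that $H$ is closed under multiplication and that $K$ is disjoint from $H$. Hence the resulting set $\mathcal S$ is a weak cap.

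\textbf{Size.} The proposition gives
\[
|\mathcal S|=2(q+1+|H|)=2\Bigl(q+1+\tfrac{q-1}{2}\Bigr)=3q+1 .
\]

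\textbf{Balance on the edges.} By construction, every point of $\mathcal S$ is either a fundamental point or lies on one of the six edges $P_iP_j$. The non-fundamental points on edge $P_iP_j$ form the set $\mathcal S_{ij}$, which is parametrised bijectively by either $H$ or $K$. Distinct parameter values give distinct projective points, because the first nonzero coordinate is normalised to $1$. So each edge carries exactly $|H|=|K|=\frac{q-1}{2}$ non-fundamental points. As a check, $4+6\cdot\frac{q-1}{2}=3q+1$, which agrees with the size computation.

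None of these steps is a genuine obstacle. The only point to state explicitly is why the balanced count occurs here: the index-$2$ choice is exactly what forces $|K|=|H|$. For a general subgroup, the edges carrying $K$ would hold $q-1-|H|$ points instead.
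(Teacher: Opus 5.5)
Your proposal is correct and is exactly the paper's argument: the corollary is Proposition~\ref{prop:cap} specialised to the index-$2$ subgroup $H$ of squares. This gives $|\mathcal S|=2\bigl(q+1+\tfrac{q-1}{2}\bigr)=3q+1$, and since $|H|=|K|=\tfrac{q-1}{2}$, every edge carries $\tfrac{q-1}{2}$ non-fundamental points. One small imprecision: Cases 2 and 3 of the proposition use $H\cdot K\subseteq K$, which needs closure under inverses and not only under multiplication, but this is automatic because $H$ is a finite subgroup.
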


\begin{remark}
When $q$ is odd, there is also a more symmetric choice. Let $H$ be the
subgroup of squares of $\F_q^\ast$ and let $K=\F_q^\ast\setminus H$ be
the set of nonsquares. Since $K\cdot K=H$, we may take
\begin{align*}
\mathcal S_{12}&=\{(1,-a,0,0):a\in K\},\\
\mathcal S_{13}&=\{(1,0,b,0):b\in K\},\\
\mathcal S_{14}&=\{(1,0,0,c):c\in K\},\\
\mathcal S_{23}&=\{(0,1,d,0):d\in K\},\\
\mathcal S_{24}&=\{(0,1,0,e):e\in K\},\\
\mathcal S_{34}&=\{(0,0,1,-f):f\in K\}.
\end{align*}
Indeed, each of the possible collinearity conditions on a face has the
form that the product of two elements of $K$ equals another element of
$K$. This is impossible, since $K\cdot K=H$ and $H\cap K=\varnothing$.
Hence, this choice also gives a weak cap of size $3q+1$.
\end{remark}


\subsubsection{A quasi-arc as subset of the weak cap}\label{subsection:subset}

Let $q$ be a prime power with $q\equiv 1\pmod 4$, so that the multiplicative
group $\mathbb{F}_q^*$ contains a subgroup $G$ of index $4$. Let $H$ be the subgroup of nonzero squares of $\F_q$, then  $G$ is its unique
index-$2$ subgroup. Choose a generator $\omega$ of the cyclic group
$\mathbb{F}_q^*/G\cong\mathbb{Z}/4\mathbb{Z}$, then the four cosets of $G$ in $\mathbb{F}_q^*$ are $
   G, \omega G, \omega^2G, \omega^3G$, and $H=G\cup \omega^2G$, $K=\F_q^*\setminus H =\omega G\cup \omega^3G.$

\begin{proposition}\label{prop:weakarc3D} Define

\begin{align*}
\mathcal S_{12}&=\{(1,-a,0,0):a\in G\},\\
\mathcal S_{13}&=\{(1,0,b,0):b\in \omega G\},\\
\mathcal S_{14}&=\{(1,0,0,c):c\in \omega ^2G\},\\
\mathcal S_{23}&=\{(0,1,d,0):d\in G\},\\
\mathcal S_{24}&=\{(0,1,0,e):e\in \omega^3G\},\\
\mathcal S_{34}&=\{(0,0,1,-f):f\in G\},
\end{align*}

and let
\[
\mathcal S=
\{P_1,P_2,P_3,P_4\}
\cup
\bigcup_{1\le i<j\le 4}\mathcal S_{ij},
\]
where
\[
P_1=(1,0,0,0),\quad
P_2=(0,1,0,0),\quad
P_3=(0,0,1,0),\quad
P_4=(0,0,0,1).
\]
Then  $\mathcal S$ is a weak arc of size $\frac{3q+5}{2}$.
\end{proposition}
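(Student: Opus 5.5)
The plan is to verify the size directly and then reduce the weak-arc condition to a finite check in the cyclic group $\F_q^*/G\cong\mathbb{Z}/4\mathbb{Z}$.

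\textbf{Size.} Each $\mathcal S_{ij}$ is a coset of $G$, so it has $(q-1)/4$ points, and points on different edges are distinct. Hence
\[
|\mathcal S| = 4+6\cdot\tfrac{q-1}{4}=\tfrac{3q+5}{2}.
\]

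\textbf{Coordinates of the edge intersections.} Take a general plane $\pi: a_1X_1+\dots+a_4X_4=0$ with all $a_i\neq 0$. It contains no fundamental point and meets each edge $P_iP_j$ in exactly one non-fundamental point. Since every point of $\mathcal S$ lies on an edge, $|\pi\cap\mathcal S|$ is the number of edges whose intersection point lies in the corresponding $\mathcal S_{ij}$. Solving $a_iX_i+a_jX_j=0$ in the parametrisations of the statement gives the parameters
\[
a=\tfrac{a_1}{a_2},\quad b=-\tfrac{a_1}{a_3},\quad c=-\tfrac{a_1}{a_4},\quad d=-\tfrac{a_2}{a_3},\quad e=-\tfrac{a_2}{a_4},\quad f=\tfrac{a_3}{a_4}.
\]
They satisfy $b=ad$, $e=df$ and $c=adf$. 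So the whole configuration is governed by the three free values $a,d,f\in\F_q^*$ along the spanning path $12$–$23$–$34$, which is the same structure as the collinearity relations in the proof of Proposition~\ref{prop:cap}.

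\textbf{Reduction to $\mathbb{Z}/4\mathbb{Z}$.} Write $\alpha,\delta,\varphi\in\mathbb{Z}/4\mathbb{Z}$ for the classes of $a,d,f$ modulo $G$, with $\omega$ corresponding to $1$. Then $\pi$ meets the six sets $\mathcal S_{12},\mathcal S_{23},\mathcal S_{34},\mathcal S_{13},\mathcal S_{24},\mathcal S_{14}$ exactly when, respectively,
\[
\alpha=0,\quad \delta=0,\quad \varphi=0,\quad \alpha+\delta=1,\quad \delta+\varphi=3,\quad \alpha+\delta+\varphi=2.
\]
We must show that no choice of $(\alpha,\delta,\varphi)$ satisfies four of these six conditions. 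Each condition is attached to an edge of $K_4$. A $4$-edge subgraph of $K_4$ either contains a triangle (a face of the tetrahedron) or is one of the three $4$-cycles.

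\textbf{The two cases.}
\begin{itemize}
\item Faces: the three conditions of each face are already inconsistent. On face $123$, $\alpha=\delta=0$ contradicts $\alpha+\delta=1$. On face $124$, $\alpha=0$ and $\delta+\varphi=3$ give $\alpha+\delta+\varphi=3\neq2$. On face $134$, $\alpha+\delta=1$ and $\varphi=0$ give $\alpha+\delta+\varphi=1\neq2$. On face $234$, $\delta=\varphi=0$ contradicts $\delta+\varphi=3$. This is the weak-cap property of Proposition~\ref{prop:cap} for this refined choice of cosets.
\item $4$-cycles: on $12,23,34,14$, we get $0\neq 2$. On $12,34,13,24$, we get $\delta=1$ and $\delta=3$. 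On $23,13,24,14$, we get $\alpha=1$ and $\varphi=3$, so $\alpha+\delta+\varphi=0\neq2$.
\end{itemize}
Hence every general plane meets $\mathcal S$ in at most three points, and $\mathcal S$ is a weak arc.

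The main obstacle I expect is the bookkeeping: getting the signs in the parametrisation right, since for instance $(1,-a,0,0)$ versus $(1,0,b,0)$ affects the relations $b=ad$ and $c=adf$. The $4$-cycle case is the genuinely new check beyond the weak-cap argument, and it is exactly where the specific assignment of the cosets $G,\omega G,\omega^2G,\omega^3G$ to the edges matters.
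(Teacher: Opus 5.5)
Your proof is correct and follows essentially the paper's route: the paper also rules out every four-edge configuration containing a face via the weak-cap property of Proposition~\ref{prop:cap}, and then checks the three $4$-cycles, where its coplanarity conditions $bf=ae$, $c=adf$, $be=cd$ are exactly what your relations $b=ad$, $e=df$, $c=adf$ give. Your version is only more self-contained, since it derives all six parameters from the plane's coefficients and does the coset bookkeeping uniformly in $\mathbb{Z}/4\mathbb{Z}$.
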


\begin{proof} Since $\mathcal{S}$ is a subset of the weak cap we constructed in Proposition \ref{prop:cap}, no general plane can meet the set in $3$ collinear points. This means that a general plane can meet each of the faces of the fundamental tetrahedron in at most $2$ points, and each such point lies on $2$ faces. It follows that a general plane can meet the set $\mathcal{S}$ in at most $4$ points. Furthermore, no plane which meets $3$ edges which contain a common fundamental point can meet the set in $4$ points.
This reduces the possibilities for planes with $4$ points to the following: the plane meets the sets
\begin{itemize}
\item $\mathcal{S}_{12},\mathcal{S}_{13},\mathcal{S}_{24},\mathcal{S}_{34}$, or
\item $\mathcal{S}_{12},\mathcal{S}_{14},\mathcal{S}_{23},\mathcal{S}_{34}$, or
\item $\mathcal{S}_{13},\mathcal{S}_{14},\mathcal{S}_{23},\mathcal{S}_{24}.$
\end{itemize}

The conditions on $a,b,c,d,e,f$ that arise from expressing that four points of those sets are coplanar are respectively:
$bf=ae$, $c=adf$, $be=cd$.
For the first condition, we see that the left hand side is contained in $\omega G$ whereas the right hand side is contained in $\omega^3 G$. Similarly, for the second condition, the left hand side is in $\omega^2 G$ and the right hand side in $G$. For the third condition, the left hand side is in $\omega^4G=G$ and the right hand side in $\omega^3G$. All of those lead to a contradiction.
\end{proof}

\section{Weak arcs for the DNA storage problem}\label{sec:arcDNA}
The goal of this section is to give some evidence to the heuristic that balanced quasi-arcs are performing well for the DNA storage problem. Recall that we want to minimise the maximum value $M$ of $\E[\tau_{P_i}(\Gscr)]$ for the fundamental points. While it is not true that balanced quasi-arcs are those point sets reaching the minimum for $M$ (see Remark \ref{rem:k3}), we will show that starting from a point set which is a balanced quasi-arc, and adding three points, if it is possible to add one point to each side and still be a weak arc, this will be better than adding the three points off the sides of the triangle. 
We will also show that a balanced quasi-arc does better compared to an unbalanced weak arc contained in the sides of the fundamental triangle, and that a balanced quasi-arc of parameter $(q-1)/2$ is optimal amongst balanced sets contained in the sides of the triangle. 

We start with an elementary but useful rewriting of the value $\E[\tau_{P_i}(\Gscr)]$.

\subsection{An alternative expression for \texorpdfstring{$\E[\tau_{P_i}(\Gscr)]$}{E[tau	extunderscore{P	extsubscript{i}}(G)]}}\label{subs:alt}

We have the following expression for $\E[\tau_{P_i}(\Gscr)]$, where the size of the multiset $\Gscr$ is $n$.
\begin{equation}\label{eq:alpha}
\E[\tau_{P_i}(\Gscr)] \;=\; n H_n \;-\; \sum_{s=1}^{n-1}\frac{\alpha_{P_i}(\G,s)}{\binom{n-1}{s}},
\end{equation}
where $H_n = 1+\tfrac{1}{2}+\cdots+\tfrac{1}{n}$ is the $n$th harmonic number and
$\alpha_{P_i}(\G,s)$ counts the $s$-subsets of $\G$ whose projective span
contains $P_i$.
More precisely, for a fundamental point $P_i$ and $1\le s\le n$, let
\[
\alpha_{P_i}(\Gscr,s)
:=
|\left\{
A\subseteq\Gscr:
|A|=s,\;
P_i\in\langle A\rangle
\right\}|.
\] In what follows, the multiset $\Gscr$ will often be clear from the context, so we also write $\alpha_{P_i}(s)$ for $\alpha_{P_i}(\Gscr,s)$ and $\E[\tau_{P_i}]$ for $\E[\tau_{P_i}(\Gscr)].$

\begin{lemma}\label{lem:alfabeta}\cite[Appendix A]{WangYaakobi2026}
We have that
\begin{equation}\label{eq:beta}
\E[\tau_{P_i}(\Gscr)] \;=\; 1 \;+\; \sum_{s=1}^{n-1}\frac{\beta_{P_i}(s)}{\binom{n-1}{s}},
\end{equation}
where
\[
\beta_{P_i}(s)\;:=\;\binom{n}{s}-\alpha_{P_i}(s)
\]
counts the $s$-subsets of $\Gscr$ whose span does not contain $P_i$. \end{lemma}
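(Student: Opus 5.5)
We derive \eqref{eq:beta} from \eqref{eq:alpha} by substituting $\alpha_{P_i}(s)=\binom{n}{s}-\beta_{P_i}(s)$. This turns the claim into the purely combinatorial identity
\[
nH_n-\sum_{s=1}^{n-1}\frac{\binom{n}{s}}{\binom{n-1}{s}}=1 .
\]
With this identity in hand, \eqref{eq:alpha} becomes
\[
\E[\tau_{P_i}]=nH_n-\sum_{s=1}^{n-1}\frac{\binom{n}{s}}{\binom{n-1}{s}}+\sum_{s=1}^{n-1}\frac{\beta_{P_i}(s)}{\binom{n-1}{s}},
\]
which is exactly \eqref{eq:beta}.

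The key step is the simplification
\[
\binom{n}{s}\Big/\binom{n-1}{s}=\frac{n}{n-s}.
\]
It follows at once from $\binom{n}{s}=\frac{n}{n-s}\binom{n-1}{s}$. The sum then reads
\[
\sum_{s=1}^{n-1}\frac{n}{n-s}=n\sum_{t=1}^{n-1}\frac1t=n(H_n-\tfrac1n)=nH_n-1,
\]
where we reindexed by $t=n-s$. Hence the difference $nH_n-(nH_n-1)$ equals $1$, as required.

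Since \eqref{eq:alpha} is quoted in the paper and not proved there, I would add a short independent check for completeness. Write $\E[\tau]=\sum_{t\ge0}\Pr[\tau>t]$, and condition on the set of distinct elements seen among the first $t$ draws. This is the coupon-collector decomposition: the expected time spent while exactly $s$ distinct elements have been seen is $n/(n-s)$, and given that count, the collected set is a uniformly random $s$-subset. Summing
\[
\frac{n}{n-s}\cdot\frac{\beta_{P_i}(s)}{\binom{n}{s}}=\frac{\beta_{P_i}(s)}{\binom{n-1}{s}}
\]
over $s\ge1$, and adding the $1$ for the first draw (the stage $s=0$, where the empty set never spans $P_i$), gives \eqref{eq:beta} directly.

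This last point is the only place needing care. One must justify that the stages are in fact independent in this sense: the sequence of distinct elements appears in a uniformly random order, independent of the holding times. That is standard, since every ordering of the distinct elements is equally likely under sampling with replacement, regardless of how long each stage lasts. The main proof, via \eqref{eq:alpha}, is just the harmonic-sum computation above and needs no such argument.
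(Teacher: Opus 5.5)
Your proof is correct. The paper gives no argument of its own for this lemma; it quotes \eqref{eq:alpha} and then cites Appendix A of Wang--Yaakobi. Your main route is the derivation that this placement suggests: substitute $\alpha_{P_i}(s)=\binom{n}{s}-\beta_{P_i}(s)$ and use $\sum_{s=1}^{n-1}\binom{n}{s}/\binom{n-1}{s}=nH_{n-1}=nH_n-1$.

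Your stage-decomposition argument is a genuinely self-contained alternative. Read backwards through the same harmonic identity, it also proves \eqref{eq:alpha}, which the paper only quotes.

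That argument needs one hypothesis stated explicitly: $P_i\in\langle\Gscr\rangle$, i.e.\ $\beta_{P_i}(n)=0$. This ensures the final stage $s=n$, which has infinite duration since $n/(n-s)$ blows up, is never reached before $\tau$. It is why the sum stops at $s=n-1$; without it $\E[\tau_{P_i}]=\infty$ and the formula fails.
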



Later on, we will write
\[
\E[\tau_{P_1}]\;=\;1 \;+\;\frac{\beta_{P_1}(1)}{\binom{n-1}{1}}+\frac{\beta_{P_1}(2)}{\binom{n-1}{2}}+\sum_{s=3}^{n-1}\frac{\beta_{P_1}(s)}{\binom{n-1}{s}},
\]
and in the simplification of $\sum_{s=3}^{n-1}\frac{\beta_{P_1}(s)}{\binom{n-1}{s}}$ we will use the following.

\begin{lemma}\label{lem:UT}
Let $n\ge 2$ and $3\le m\le n-1$. Define
\begin{align}
T(m):=\sum_{s=3}^{n-1}\frac{\binom{m}{s}}{\binom{n-1}{s}}. \label{eq:Tdef}
\end{align}
Then
\[
T(m)=\frac{m(m-1)(m-2)}{(n-1)(n-2)(n-m)}.
\]
\end{lemma}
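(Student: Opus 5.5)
The plan is to evaluate the full sum $\sum_{s\ge 0}\binom{m}{s}/\binom{n-1}{s}$ in closed form and then subtract the three terms $s=0,1,2$.

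First, since $m\le n-1$, every term with $s>m$ vanishes because $\binom{m}{s}=0$. So $T(m)=\sum_{s=3}^{m}\binom{m}{s}/\binom{n-1}{s}$. Write $N=n-1$. The key step is the identity
\[
\frac{\binom{m}{s}}{\binom{N}{s}}=\frac{\binom{N-s}{m-s}}{\binom{N}{m}},\qquad 0\le s\le m\le N .
\]
It follows by writing both sides as factorials, or from $\binom{N}{s}\binom{N-s}{m-s}=\binom{N}{m}\binom{m}{s}$, where both products count a chain $s$-set $\subseteq$ $m$-set inside an $N$-set.

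Second, sum this identity over $0\le s\le m$. Since $\binom{N-s}{m-s}=\binom{N-s}{N-m}$, the hockey-stick identity gives
\[
\sum_{s=0}^{m}\binom{N-s}{N-m}=\sum_{t=N-m}^{N}\binom{t}{N-m}=\binom{N+1}{N-m+1}=\binom{N+1}{m}.
\]
Dividing by $\binom{N}{m}$ yields
\[
\sum_{s=0}^{m}\frac{\binom{m}{s}}{\binom{N}{s}}=\frac{N+1}{N+1-m}=\frac{n}{n-m}.
\]

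Third, subtract the terms for $s=0,1,2$, which are $1$, $\frac{m}{n-1}$ and $\frac{m(m-1)}{(n-1)(n-2)}$; the last needs $n\ge 3$, which holds since $3\le m\le n-1$. The simplification then telescopes step by step:
\[
\frac{n}{n-m}-1=\frac{m}{n-m},\qquad
\frac{m}{n-m}-\frac{m}{n-1}=\frac{m(m-1)}{(n-1)(n-m)},
\]
and finally
\[
\frac{m(m-1)}{n-1}\Bigl(\frac{1}{n-m}-\frac{1}{n-2}\Bigr)=\frac{m(m-1)(m-2)}{(n-1)(n-2)(n-m)},
\]
which is the claimed value of $T(m)$.

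The only step needing care is the closed form for the full sum, which rests on the identity in the first step and the hockey-stick identity. The rest is routine algebra. One should also note that $n-m\ge 1$, so no denominator vanishes.
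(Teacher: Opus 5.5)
Your proof is correct, but it takes a different route from the paper's.

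\textbf{The paper's route.} The paper never evaluates the full sum. It writes $f(s)=\binom{m}{s}/\binom{n-1}{s}$ and introduces the certificate $g(s)=\frac{n-s}{n-m}f(s)$, with $g(m+1)=0$. Using only the ratio $f(s+1)/f(s)=\frac{m-s}{n-1-s}$, it checks that $f(s)=g(s)-g(s+1)$, so the sum telescopes directly to $g(3)$.

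\textbf{Your route.} You rewrite each term as $\binom{N-s}{m-s}/\binom{N}{m}$. You then sum over the complete range $0\le s\le m$ with the hockey-stick identity to get $\frac{n}{n-m}$, and finally subtract the $s=0,1,2$ terms.

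\textbf{Comparison.} Your argument is more conceptual: it explains the classical closed form $\sum_{s=0}^{m}\binom{m}{s}/\binom{n-1}{s}=\frac{n}{n-m}$ through a double-counting identity. The paper's telescoping is purely mechanical. It also gives $\sum_{s=k}^{m}f(s)=\frac{n-k}{n-m}f(k)$ for every lower limit $k$ at once (your full sum is the case $k=0$), with no initial terms to subtract.

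\textbf{A shortcut in your method.} You can drop the subtraction step entirely. Apply the hockey-stick identity to the truncated range:
$\sum_{s=3}^{m}\binom{N-s}{N-m}=\sum_{t=N-m}^{N-3}\binom{t}{N-m}=\binom{N-2}{m-3}$.
This gives $T(m)=\binom{N-2}{m-3}/\binom{N}{m}=\frac{m(m-1)(m-2)}{(n-1)(n-2)(n-m)}$ in one line.
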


\begin{proof}
Write $f(s):=\frac{\binom{m}{s}}{\binom{n-1}{s}}$, so that $T(m)=\sum_{s=3}^{m}f(s)$.

Define, for $3\le s\le m$,
\[
g(s)=\frac{n-s}{n-m}\cdot f(s),
\]
and set $g(m+1)=0$.

We claim that
$f(s)=g(s)-g(s+1)$ 
for every $3\le s\le m$. For $s=m$, this is clear. For $3\le s\le m-1$, we have
\begin{align*}
g(s)-g(s+1)
&=\frac{n-s}{n-m}f(s)-\frac{n-s-1}{n-m}f(s+1) \\
&=\frac{f(s)}{n-m}\left((n-s)-(n-s-1)\frac{f(s+1)}{f(s)}\right).
\end{align*}
Since
\[
\frac{f(s+1)}{f(s)}
=\frac{\binom{m}{s+1}\big/\binom{m}{s}}
{\binom{n-1}{s+1}\big/\binom{n-1}{s}}
=\frac{m-s}{n-1-s},
\]
we obtain
\[
g(s)-g(s+1)
=\frac{f(s)}{n-m}\left((n-s)-(n-s-1)\cdot\frac{m-s}{n-1-s}\right)
=\frac{f(s)}{n-m}\left(n-m\right)=f(s),
\]
as claimed. Therefore,
\[
T(m)=\sum_{s=3}^{m}f(s)
=\sum_{s=3}^{m}\bigl(g(s)-g(s+1)\bigr)
=g(3)-g(m+1)
=g(3),
\]
where\[
g(3)=\frac{n-3}{n-m}\cdot \frac{\binom{m}{3}}{\binom{n-1}{3}}
=\frac{m(m-1)(m-2)}{(n-1)(n-2)(n-m)}.
\] \end{proof}

\subsection{Balanced quasi-arcs are a good choice}\label{subs:opt}
While it is not possible to show that $M$ is optimal when a point set is a quasi-arc (see Remark \ref{rem:k3}), the following two propositions support the heuristic that these point sets are a good choice.

In the next proposition, we show that adding one point to each side of a balanced quasi-arc is better than a generic off-side extension of a balanced quasi-arc.

\begin{proposition}
\label{lem:three-point-side-extension}
Let $P_1,P_2,P_3$ be the fundamental points, and let $
\mathcal{S}\subseteq P_1P_2\cup P_1P_3\cup P_2P_3$
be a balanced quasi-arc of parameter $a$ of size $3+3a$. 
Assume that there are points
$Y_1\in P_2P_3,
Y_2\in P_1P_3,
Y_3\in P_1P_2$,
not belonging to $\mathcal{S}$, such that $\mathcal{S^+}:=\mathcal{S}\cup\{Y_1,Y_2,Y_3\}$ is a balanced quasi-arc with parameter $a+1$.  Now let $X_1,X_2,X_3$ be three points not on the sides of the fundamental triangle, and put
\[
\mathcal{S^{\circ}}:=\mathcal{S}\cup\{X_1,X_2,X_3\}.
\]
Assume that $\mathcal{S}^{\circ}$ is a weak arc and that the points $X_1,X_2,X_3$ are generic: {the lines $X_1X_2,X_1X_3,X_2X_3$ do not contain any of the points $P_1,P_2,P_3$.}  Then
\[
M(\mathcal{S}^+) < M(\mathcal{S}^{\circ}).
\]
\end{proposition}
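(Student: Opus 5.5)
We compare $\E[\tau_{P_i}]$ directly through Lemma~\ref{lem:alfabeta}. Both multisets have the same size $N:=3a+6$, so the normalising binomials $\binom{N-1}{s}$ coincide. By the symmetry of the balanced quasi-arc $\mathcal S^+$ (one added point on every side), it suffices to compute $\E[\tau_{P_1}(\mathcal S^+)]$ and obtain the corresponding expression for every $P_i$. We then show $\E[\tau_{P_i}(\mathcal S^\circ)]>\E[\tau_{P_i}(\mathcal S^+)]$ for each $i$; in particular this holds for $i=1$, so $M(\mathcal S^\circ)\ge \E[\tau_{P_1}(\mathcal S^\circ)]>M(\mathcal S^+)$. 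We treat the terms $s=1$, $s=2$ and $s\ge 3$ separately, always working with $P_1$; the other fundamental points are identical by relabelling, since the hypotheses are symmetric.

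\textbf{Terms $s=1$ and $s=2$.} We have $\beta_{P_1}(1)=N-1$ in both cases. For $s=2$, a pair spans a line through $P_1$, so it contains $P_1$ in its span exactly when it contains $P_1$, or when both points lie on a common line through $P_1$. For $\mathcal S^+$, the lines $P_1P_2$ and $P_1P_3$ each carry $a+2$ points besides $P_1$. This gives $(N-1)+2\binom{a+2}{2}$ spanning pairs; all other lines through $P_1$ meet $\mathcal S^+$ in only one further point. For $\mathcal S^\circ$, these two sides carry only $a+1$ points besides $P_1$. The line $P_1X_i$ contains at most one more point of $\mathcal S^\circ$: no other $X_j$ lies on it, by genericity, and it meets $P_2P_3$ in at most one point of $\mathcal S$. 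Hence $\alpha_{P_1}(\mathcal S^\circ,2)\le (N-1)+2\binom{a+1}{2}+3$, and so
\[
\beta_{P_1}(\mathcal S^\circ,2)-\beta_{P_1}(\mathcal S^+,2)\ \ge\ 2(a+1)-3=2a-1 .
\]

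\textbf{Terms $s\ge3$.} In the plane, an $s$-set with $s\ge 3$ avoids $P_1$ in its span if and only if it lies on a line not through $P_1$. Since both sets are weak arcs, general lines carry at most $2$ points, so only $P_2P_3$ and non-side lines through $P_2$ or $P_3$ matter. For $\mathcal S^+$, a non-side line through $P_2$ meets $\mathcal S^+$ only in $P_2$ and one point of $P_1P_3$. Hence $\beta_{P_1}(\mathcal S^+,s)=\binom{a+3}{s}$, and the tail equals $T(a+3)$ by Lemma~\ref{lem:UT}. For $\mathcal S^\circ$, the side $P_2P_3$ gives $T(a+2)$. A non-side line through $P_2$ or $P_3$ contains at most $P_2$ (resp.\ $P_3$), one side point and one $X_i$: genericity forbids two $X$'s on a line through a fundamental point. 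So there are at most six extra $3$-point lines, each contributing $1/\binom{N-1}{3}$. We therefore drop them, using only $\ge T(a+2)$.

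\textbf{Main obstacle.} It remains to prove
\[
\frac{2a-1}{\binom{N-1}{2}}\ >\ T(a+3)-T(a+2),
\]
with $N-1=3a+5$, where by Lemma~\ref{lem:UT}
\[
T(m)=\frac{m(m-1)(m-2)}{(N-1)(N-2)(N-m)} .
\]
The two sides are of the same order in $1/N$, so the constants matter. We would first clear denominators, giving a polynomial inequality in $a$, and verify it for all $a\ge1$, handling tiny $a$ numerically. If the inequality turns out to be too tight, we would reinstate the discarded contributions of the $3$-point lines through $P_2,P_3$ in $\mathcal S^\circ$. Each such line arises when $X_i$ is collinear with $P_2$ and a point of $\mathcal S$ on $P_1P_3$, which also produces extra non-spanning pairs for $s=2$, and we would use these to strengthen the estimate.
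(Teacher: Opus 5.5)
Your decomposition into $s=1$, $s=2$ and the tail $s\ge 3$ via $T$ is the same as the paper's, and your $s=2$ count is more careful than the paper's. The paper takes the $s=2$ difference to be exactly $2(a+1)$ and the tail of $\mathcal S^\circ$ to be exactly $T(a+2)$. That tacitly assumes no line $P_kX_i$ passes through a point of $\mathcal S$, which the stated genericity does not guarantee.

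The gap is in your final step: the inequality you reduce to is false for $a=1$. Indeed
\[
T(a+3)-T(a+2)=\frac{(a+1)(7a+12)}{2(2a+3)(3a+4)(3a+5)},\qquad \binom{3a+5}{2}=\frac{(3a+4)(3a+5)}{2},
\]
so $\frac{2a-1}{\binom{3a+5}{2}}>T(a+3)-T(a+2)$ is equivalent to $4(2a-1)(2a+3)>(a+1)(7a+12)$, i.e.\ $3a^2-a-8>0$. This holds for $a\ge 2$. At $a=1$, however, the left side is $\frac{1}{28}=\frac{10}{280}$ and the right side is $\frac{19}{280}$, so verifying it ``for all $a\ge1$'' cannot succeed.

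Your fallback does not repair this. A collinearity of $X_i$ with $P_2$ (or $P_3$) and a point of $\mathcal S$ creates no new non-spanning pairs, since a pair not on a line through $P_1$ is non-spanning anyway. Nothing forces such collinearities to exist, so the reinstated $s=3$ terms may be zero while your $s=2$ bound still charges the full loss of $3$.

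The missing observation is that a loss of $3$ is not attainable when $a$ is small. Since $P_1\notin X_iX_j$, the lines $P_1X_1,P_1X_2,P_1X_3$ are pairwise distinct, so they meet $P_2P_3$ in three distinct points. Hence at most $\min(3,a)$ of them pass through one of the $a$ non-fundamental points of $\mathcal S$ on $P_2P_3$. This gives $\beta_{P_1}(\mathcal S^\circ,2)-\beta_{P_1}(\mathcal S^+,2)\ge 2(a+1)-\min(3,a)$, which equals $3$ when $a=1$. Since $\frac{3}{28}=\frac{30}{280}>\frac{19}{280}$, that case follows, and for $a\ge 2$ your bound $2a-1$ already suffices. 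With this one observation added, your argument is a complete proof along the paper's route, and more rigorous than the paper's.
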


\begin{proof}
Write $n=|\mathcal{S}^{+}|=|\mathcal{S}^{\circ}|=3a+6$ and as in Lemma \ref{lem:alfabeta}, write $\beta_{P_i}(\Gscr,s)=\binom{n}{s}-\alpha_{P_i}(\Gscr,s)$. Using
\eqref{eq:beta} we find that
\[
\Delta_i
:=\E[\tau_{P_i}(\mathcal{S}^{\circ})]-\E[\tau_{P_i}(\mathcal{S}^{+})]
=\sum_{s=1}^{n-1}
    \frac{\beta_{P_i}(\mathcal{S}^{\circ},s)-\beta_{P_i}(\mathcal{S}^{+},s)}{\binom{n-1}{s}} .\]

We will only consider $i=1$. Since $\mathcal{S}^{+}$ is a balanced quasi-arc,  $M(\mathcal{S}^{+})=\E[\tau_{P_1}(\mathcal{S}^{+}]$, and trivially
$M(\mathcal{S}^{\circ})\ge\E[\tau_{P_1}(\mathcal{S}^{\circ})]$, so it suffices to
prove $\Delta_1>0$.

{Note that in either configuration, all subsets of at least $3$ points whose span does not contain $P_1$ are contained in $P_2P_3$; this follows from the fact that $\mathcal{S}^+$ is a balanced quasi-arc, and the points $X_1,X_2,X_3$ of the weak arc $\mathcal{S}^\circ$ span lines not containing $P_1,P_2,P_3$. Since the number of points of $\mathcal{S}^+$ on $P_2P_3$ is $a+3$, and of $\mathcal{S}^{\circ}$ on $P_2P_3$ is $a+2$ it follows that for $s\geq 3$, $\beta_{P_1}(\mathcal{S}^+,s)=\binom{a+3}{s}$ and $\beta_{P_1}(\mathcal{S}^{\circ},s)=\binom{a+2}{s}$. Hence, for $s\geq 3$, $\beta_{P_1}(\mathcal{S}^{\circ},s)-\beta_{P_1}(\mathcal{S}^{+},s)=\binom{a+2}{s}-\binom{a+3}{s}=-\binom{a+2}{s-1}.$
The number of points $\neq P_1$ on $P_1P_2$ and on $P_1P_3$ in $\mathcal{S}^+$ is equal to $a+2$, and in $\mathcal{S}^\circ$ is equal to $a+1$, so we obtain
$\beta_{P_1}(\mathcal{S}^+,2)=\binom{n-1}{2}-2\binom{a+2}{2}$ and $\beta_{P_1}(\mathcal{S}^\circ,2)=\binom{n-1}{2}-2\binom{a+1}{2}$. Hence, 
$\beta_{P_1}(\mathcal{S}^\circ,2)-\beta_{P_1}(\mathcal{S}^+,2)
=-2\binom{a+1}{2}+2\binom{a+2}{2}=2(a+1).$
Finally, $\beta_{P_1}(\mathcal{S}^+,1)=\beta_{P_1}(\mathcal{S}^\circ,1)=n-1.$}
Since $n-1=3a+5$, we obtain
\[
\Delta_1
=\frac{2(a+1)}{\binom{3a+5}{2}}
 \;-\;\sum_{s=3}^{a+3}\frac{\binom{a+2}{s-1}}{\binom{3a+5}{s}} .
\]

We can rewrite $S_a=\sum_{s=3}^{a+3}\frac{\binom{a+2}{s-1}}{\binom{3a+5}{s}}$ using $\binom{a+2}{i-1}=\binom{a+3}{i}-\binom{a+2}{i}$ and find that $
S_a=T(a+3)-T(a+2)$ for $T$ as defined in \eqref{eq:Tdef}.

Now
\[
T(a+3)=\frac{(a+3)(a+2)(a+1)}{(3a+5)(3a+4)(2a+3)},
\qquad
T(a+2)=\frac{(a+2)(a+1)a}{(3a+5)(3a+4)(2a+4)}.
\]
We see that our claim that $\Delta_1>0$ is equivalent to
\[
T(a+3)-T(a+2)\;<\;\frac{2(a+1)}{\binom{3a+5}{2}}
\;=\;\frac{4(a+1)}{(3a+4)(3a+5)},
\]
which is equivalent to
\[
\frac{a+3}{2a+3}-\frac{a}{2a+4}\;<\;4.
\]
It is easy to see that this inequality holds.
\end{proof}

In the same spirit, we will now show that for all weak arcs contained on the sides of the fundamental triangle, the balanced quasi-arc has the lowest value of $M$.

\begin{proposition}
\label{prop:balbetter}
Let $
\Gscr^{a_1,a_2,a_3}\subseteq P_2P_3\cup P_1P_3\cup P_1P_2$ 
denote a weak arc of size $n=3+3a$ containing $P_1,P_2,P_3$ where $a_i$ is the number of non-fundamental points of
$\Gscr$ on the side $P_jP_k$. Thus $a_1+a_2+a_3=3a$.
 Then
\[
M(\Gscr^{a,a,a})\le M(\Gscr^{a_1,a_2,a_3}).
\]
With strict inequality unless
\[
a_1=a_2=a_3=a.
\]
\end{proposition}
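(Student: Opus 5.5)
The plan is to compute $\E[\tau_{P_i}]$ exactly for an arbitrary weak arc $\Gscr=\Gscr^{a_1,a_2,a_3}$ on the sides of the triangle, using Lemma~\ref{lem:alfabeta}, and then to compare the maxima. By symmetry it suffices to treat $P_1$. We count the $s$-subsets of $\Gscr$ whose span misses $P_1$.

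\textbf{Counting $\beta_{P_1}(s)$.}
\begin{itemize}
\item For $s=1$, every point other than $P_1$ qualifies, so $\beta_{P_1}(1)=n-1$.
\item For $s=2$, a pair spans $P_1$ exactly when both points lie on $P_1P_2\setminus\{P_1\}$ (which has $a_3+1$ points) or on $P_1P_3\setminus\{P_1\}$ (which has $a_2+1$ points). Hence
\[
\beta_{P_1}(2)=\binom{n-1}{2}-\binom{a_3+1}{2}-\binom{a_2+1}{2}.
\]
\item For $s\ge 3$, a subset misses $P_1$ only if it spans a line not through $P_1$. The weak arc property excludes three collinear points on a general line. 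A line through $P_2$ or $P_3$ other than a side meets $\Gscr$ in at most two points. So the only such subsets lie on $P_2P_3$, which carries $a_1+2$ points, and $\beta_{P_1}(s)=\binom{a_1+2}{s}$.
\end{itemize}

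Plugging these into \eqref{eq:beta} and applying Lemma~\ref{lem:UT} (the formula also gives the correct value $0$ when $a_1+2<3$) yields
\[
\E[\tau_{P_1}]=3-\frac{a_2(a_2+1)+a_3(a_3+1)}{(n-1)(n-2)}+T(a_1+2).
\]
Writing $G=\sum_j a_j(a_j+1)/((n-1)(n-2))$ and $h(x)=x(x+1)/((n-1)(n-2))+T(x+2)$, this becomes
\[
\E[\tau_{P_i}]=3-G+h(a_i).
\]
Since $h$ is increasing, $M(\Gscr)=3-G+h(a_{\max})$, while $M(\Gscr^{a,a,a})=3-3a(a+1)/((n-1)(n-2))+h(a)$.

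\textbf{Reducing to an inequality.} Put $d_i=a_i-a$, so that $\sum_i d_i=0$, and let $t=\max_i d_i\ge 0$. Then
\[
G-\frac{3a(a+1)}{(n-1)(n-2)}=\frac{D}{(n-1)(n-2)},\qquad D=\sum_i d_i^2 .
\]
The claim is therefore equivalent to
\[
(n-1)(n-2)\bigl(h(a+t)-h(a)\bigr)\ \ge\ D,
\]
with strict inequality when $t\ge1$. The case $t=0$ forces all $d_i=0$, which is the balanced case, and gives equality.

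To bound $D$: since $d_i\le t$, $\sum d_i=0$ and $d_i\ge -a$, each $d_i$ lies in $[-\min(2t,a),\,t]$. Convexity then gives $D\le 6t^2$, with the extreme configuration $(t,t,-2t)$, or a sharper bound when $2t>a$.

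On the left-hand side, the quadratic part of $h$ contributes $t(2a+t+1)$. The $T$-part contributes
\[
(n-1)(n-2)\bigl(T(a+t+2)-T(a+2)\bigr),
\]
which by the closed form of Lemma~\ref{lem:UT} equals $\dfrac{m(m-1)(m-2)}{n-m}$ evaluated between $m=a+2$ and $m=a+t+2$, with $n=3a+3$.

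\textbf{Main obstacle.} The hardest step is verifying
\[
t(2a+t+1)+\Bigl[\frac{m(m-1)(m-2)}{n-m}\Bigr]_{m=a+2}^{a+t+2}>\min\bigl(6t^2,\text{sharper bound}\bigr)
\]
for all $1\le t\le 2a$. The quadratic term alone suffices only when $5t\le 2a+1$, so for larger $t$ the cubic term must be used. I would lower-bound that term by $t$ times its derivative at $m=a+2$, which is of order $a^2\cdot\frac{3}{2a}$, i.e.\ linear in $a$ and hence comparable to $t$. For $t>a/2$ I would switch to the exact constraint $d_i\ge -a$ in place of $D\le 6t^2$. If a uniform estimate fails, I would fall back on a direct check of the finitely many small values of $a$.
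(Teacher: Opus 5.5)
Your reduction is correct, and it is a genuinely different route from the paper's. The paper averages over the three fundamental points and proves that $\overline{\E}$ is minimised at $(a,a,a)$. That is the stronger inequality $\tfrac13\sum_i h(a_i)-h(a)\ge D/((n-1)(n-2))$, and because $\phi$ is not convex it costs the long polynomial analysis of Lemma~\ref{lem:phi}. You instead use $\E[\tau_{P_i}]=3-G+h(a_i)$ with $h$ increasing to compare the maxima directly, so you only need $(n-1)(n-2)\bigl(h(a+t)-h(a)\bigr)\ge D$.

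\textbf{The gap.} As written, the proposal stops exactly at this inequality. It is the entire content of the proof, and your last paragraph only sketches possible estimates and a fallback to case checks. So the argument is incomplete.

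\textbf{How to close it.} It takes a few lines and no case split. Since $a_i\ge 0$, each $d_i$ satisfies $-a\le d_i\le t$, so $(t-d_i)(d_i+a)\ge 0$, i.e.\ $d_i^2\le (t-a)d_i+at$. Summing and using $\sum_i d_i=0$ gives $D\le 3at$. Also $t\le 2a$, so $2a+1-t\ge 1$. Put $p(x)=x(x+1)(x+2)$, which is convex on $[0,\infty)$ with $p'(a)=3a^2+6a+2$. Then
\[
\frac{p(a+t)}{2a+1-t}-\frac{p(a)}{2a+1}\ \ge\ \frac{p(a+t)-p(a)}{2a+1}\ \ge\ t\,\frac{3a^2+6a+2}{2a+1}\ \ge\ \frac{3}{2}\,at .
\]
Hence
\[
(n-1)(n-2)\bigl(h(a+t)-h(a)\bigr)\ \ge\ t(2a+t+1)+\tfrac32 at\ >\ 3at\ \ge\ D \qquad\text{for all } t\ge 1 .
\]
This gives strict inequality whenever the $a_i$ are not all equal, since then $t\ge1$.

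\textbf{Comparison.} Completed this way, your route is much shorter than the paper's. What the paper's route buys in exchange is the stronger fact that even the averaged expectation $\overline{\E}$, not just the maximum, is minimised by the balanced configuration.
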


\begin{proof}
The averaged recovery expectation $\overline{\E}(\Gscr)$ over the three fundamental points satisfies
$\overline{\E}(\Gscr)\le M(\Gscr)$, and in the symmetric case $a_1=a_2=a_3$ all three values
coincide, so $\overline{E}(\Gscr^{a,a,a})=M(\Gscr^{a,a,a})$. Hence
\[
M(\Gscr^{a,a,a})=\overline{\E}(\Gscr^{a,a,a})
\le\overline{\E}(\Gscr^{a_1,a_2,a_3})
\le M(\Gscr^{a_1,a_2,a_3}),
\]
provided $\overline{\E}$ is indeed minimised at $(a,a,a)$; it suffices to prove the latter.

As before, for a point $P$ and integer $s$, let
$\beta_P(\Gscr^{a_1,a_2,a_3},s)$ be the number of $s$-subsets of $\Gscr^{a_1,a_2,a_3}$ whose span does not contain $P$. We have $
\beta_{P_i}(\Gscr^{a_1,a_2,a_3},1)=n-1$.
Since there there are $\binom{a_j+1}{2}+\binom{a_k+1}{2}$ pairs of points lying on $P_iP_j$ and $P_iP_k$ we have
\[
\beta_{P_i}(\Gscr^{a_1,a_2,a_3},2)=\binom{n-1}{2}-\binom{a_j+1}{2}-\binom{a_k+1}{2}.
\]
The only subsets of size $s\ge3$ whose span does not contain $P_i$ are those contained in $P_jP_k$. So for $s\geq 3$, $\beta_{P_i}(\Gscr^{a_1,a_2,a_3},s)=\binom{a_i+2}{s}$.
This gives
\[
\E[\tau_{P_i}(\Gscr^{a_1,a_2,a_3})]
=1+1+1-\frac{\binom{a_j+1}{2}+\binom{a_k+1}{2}}{\binom{n-1}{2}}
+\sum_{s=3}^{n-1}\frac{\binom{a_i+2}{s}}{\binom{n-1}{s}}\\=3-\frac{\binom{a_j+1}{2}+\binom{a_k+1}{2}}{\binom{n-1}{2}}
+T(a_i+2).
\]

When taking the average over $i=1,2,3$, each term $\binom{a_r+1}{2}$ occurs twice in the sum, so we
obtain
\[
\overline{\E}(\Gscr^{a_1,a_2,a_3})=3+\tfrac13\sum_{r=1}^{3}\phi(a_r)
\text{, where }
\phi(t)=T(t+2)-\frac{2\binom{t+1}{2}}{\binom{n-1}{2}}.
\]
The problem reduces to showing that
$\sum_{r}\phi(a_r)\ge3\phi(a)$ when $a_1+a_2+a_3=3a$, and the inequality is strict unless all $a_1=a_2=a_3=a$.
A direct calculation shows that it is true (even though $\phi(t)$ is not convex), see Lemma \ref{lem:phi} in the Appendix.

\end{proof}
Finally, we show that, for odd $q$, balanced quasi-arcs minimise the recovery expectation among all balanced point sets contained in the sides of the fundamental triangle.

\begin{theorem}
\label{thm:optimal-simple-side-configuration}
Let $q$ be odd and let $P_1,P_2,P_3$ be fundamental points. Consider point
sets $\mathcal S$ (without multiplicities) with the following properties:
\begin{enumerate}
\item $\mathcal S$ is contained in the union of the three sides
      $P_1P_2\cup P_1P_3\cup P_2P_3$;
\item $P_1,P_2,P_3\in\mathcal S$;
\item each side contains exactly $a$ points of $\mathcal S$ other than
      its two fundamental points, for some $1\leq a\leq q-1$.
\end{enumerate}
Then $\mathbb E[\tau_{P_i}]$ is independent of the choice of the fundamental point $P_i$ and is
minimised when
\[
a=\frac{q-1}{2}
\]
and $\mathcal S$ is a balanced quasi-arc.
In that case, we have that for $i=1,2,3$,
\begin{equation}
\mathbb E[\tau_{P_i}]
 =\frac{3(17q^3+q^2-q-1)}{2q(3q-1)(3q+1)}.
\end{equation}
In particular,
\[
\lim_{q\to\infty}\mathbb E[\tau_{P_i}]=\frac{17}{6}\approx 2.8333,
\qquad
\lim_{q\to\infty}\frac{\mathbb E[\tau_{P_i}]}{3}=\frac{17}{18}\approx 0.9444.
\]
\end{theorem}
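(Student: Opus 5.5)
The plan is to compute $\E[\tau_{P_i}]$ in closed form using Lemma \ref{lem:alfabeta}, in the same way as in the proof of Proposition \ref{prop:balbetter}. The difference is that we no longer assume the weak arc property, so we have to keep track of the collinear triples that meet all three sides. Put $n=|\mathcal S|=3a+3$ and fix $P_1$; the other two fundamental points are handled by symmetry.

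First we count the subsets whose span misses $P_1$, size by size.
\begin{itemize}
\item For $s=1$ we have $\beta_{P_1}(1)=n-1$.
\item A pair spans $P_1$ exactly when it contains $P_1$ or lies on $P_1P_2$ or on $P_1P_3$. Hence $\beta_{P_1}(2)=\binom{n-1}{2}-2\binom{a+1}{2}$.
\item For $s\ge 3$, a subset whose span misses $P_1$ must be collinear on a line not through $P_1$. A line through $P_2$ or $P_3$ other than a side contains at most two points of $\mathcal S$. So the only possibilities are subsets of $P_2P_3$, which carries $a+2$ points, and triples on a general line meeting all three sides.
\end{itemize}
Let $t$ be the number of such general collinear triples. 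Then $\beta_{P_1}(s)=\binom{a+2}{s}+t\,[s=3]$. By Lemma \ref{lem:UT} this gives
\[
\E[\tau_{P_1}] = 3-\frac{2\binom{a+1}{2}}{\binom{n-1}{2}}+T(a+2)+\frac{t}{\binom{n-1}{3}}.
\]
The quantity $t$ does not depend on which fundamental point we chose, and every side carries $a$ points. Therefore $\E[\tau_{P_i}]$ is the same for $i=1,2,3$, which proves the independence claim.

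Next we parametrise the sides by sets $A,B,C\subseteq\F_q^*$ as in Lemma \ref{prop:reduction}. Then $t=|\{(x,y,z)\in A\times B\times C: xy=z\}|$, and $t=0$ exactly when $\mathcal S$ is a quasi-arc. By Lemma \ref{prop:univ} this requires $a\le (q-1)/2$, and for odd $q$ the value $a=(q-1)/2$ is attained by Proposition \ref{cor:odd}.

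For $a>(q-1)/2$ we need a lower bound on $t$. For each pair $(x,y)\in A\times B$ the product $xy$ fails to lie in $C$ only if it lies in $\F_q^*\setminus C$. For fixed $x$ the map $y\mapsto xy$ is injective, so at most $q-1-a$ values of $y$ give $xy\notin C$. Summing over $x$ gives
\[
t\ \ge\ a^2-a(q-1-a)=a\bigl(2a-(q-1)\bigr).
\]
Write $F(a,t)$ for the displayed formula. It then suffices to prove two things:
\begin{enumerate}
\item $g(a):=F(a,0)$ is strictly decreasing on $1\le a\le (q-1)/2$;
\item $F\bigl(a,a(2a-q+1)\bigr)> g\bigl((q-1)/2\bigr)$ for $(q-1)/2<a\le q-1$.
\end{enumerate}
Both become explicit rational-function inequalities once we insert $T(a+2)=\frac{(a+2)(a+1)a}{(n-1)(n-2)(n-a-2)}$ with $n-1=3a+2$.

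Step 1 should be a discrete-difference check: $g(a+1)-g(a)<0$ reduces to a polynomial inequality in $a$. The main obstacle I expect is Step 2. Adding a point to each side lowers the $\beta(2)$ term and raises $T$, and the penalty $t/\binom{3a+2}{3}$ has to beat the net gain. I would first compare consecutive values, $F$ at $a+1$ against $g$ at $a=(q-1)/2$, using $t\ge a(2a-q+1)\ge 2a$ at $a=(q+1)/2$. I would then check that the lower bound $F\bigl(a,a(2a-q+1)\bigr)$ is increasing in $a$ beyond $(q-1)/2$, which should follow because the penalty grows like $a(2a-q)/a^3$ while the other terms are $O(1/a)$ in variation. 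If that monotonicity turns out to be delicate for small $q$, I would verify the first few odd $q$ directly.

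Finally we substitute $a=(q-1)/2$, so $n=3(q+1)/2$ and $n-1=(3q+1)/2$, and simplify. This should give $\frac{3(17q^3+q^2-q-1)}{2q(3q-1)(3q+1)}$, and the stated limits follow at once.
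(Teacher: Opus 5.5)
Your reduction is the paper's own: the same counts $\beta_{P_1}(s)$, the same formula $\mathbb E[\tau_{P_1}]=F(a)+t/\binom{3a+2}{3}$, the same bound $t\ge a(2a-(q-1))=2a(a-h)$ with $h=\frac{q-1}{2}$, and the same two target inequalities. Step 1 is the routine difference computation that the paper records explicitly. The gap is in the method you propose for Step 2.

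With $t=2a(a-h)$, the quantity you must control is
\[
G(a):=F(a)+\frac{4(a-h)}{(3a+1)(3a+2)},
\]
and $G$ is \emph{not} increasing on $h<a\le 2h$.

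Your scaling heuristic is off. Per unit step, $F$ decreases by about $\frac{1}{12a^2}$. The penalty $\approx\frac{4(a-h)}{9a^2}$ changes by about $\frac{4(2h-a)}{9a^3}$, which is of the same order $a^{-2}$ and vanishes at $a=2h$. Setting $a=xh$, one finds
\[
G(a)-F(h)\approx\frac{(x-1)(144-27x)}{324x^2h},
\]
which increases only up to $x=\frac{32}{19}$ and then decreases.

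Concretely, for $q=7$ you get $G(5)\approx 2.87901>G(6)\approx 2.87854$. The same drop near $a=q-1$ occurs for every large $q$, so checking the first few $q$ by hand cannot rescue the monotonicity argument.

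The inequality $G(a)>F(h)$ is nevertheless true. Any proof of it must use the constraint $a\le q-1=2h$: if $a$ could exceed roughly $\frac{16}{3}h$, then $G(a)$ would fall below $F(h)$. The paper proves it directly. It computes
\[
G(a)-F(h)=\frac{(a-h)R(a,h)}{(2a+1)(3a+1)(3a+2)(2h+1)(3h+1)(3h+2)},
\]
where
\[
R(a,h)=a\bigl(144h^3+177h^2+59h+10-a(27h^2+39h+6)\bigr)+72h^3+102h^2+46h+8 .
\]
It then uses $a\le 2h$ to bound the bracket below by $90h^3+99h^2+47h+10>0$.

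Replace your monotonicity step with this factorisation. Alternatively, prove that $G$ is unimodal and check the endpoints $a=h+1$ and $a=2h$, though that is more work. With either fix, the rest of your argument, including the final substitution $a=h$, goes through.
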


\begin{proof}
Fix a point set $\mathcal S$ satisfying the hypotheses, and let $t$ be
the number of $3$-secant lines to $\mathcal{S}$, not containing a fundamental point. Put $n=3a+3.$
The number $\beta_{P_i}(s)$ (the number of $s$-subsets of
$\mathcal S\setminus\{P_i\}$ whose span does not contain
$P_i$) will only depend on $a$ and $t$, so is independent of $i$.  We calculate them for $P_1$.

Clearly, $\beta_{P_1}(1)=n-1.$
For $s\geq2$, the lines spanned by points of $\mathcal{S}$, not through $P_1$ are as follows:
\begin{itemize}
\item the opposite side $P_2P_3$ which contains $a+2$ points of
$\mathcal S$;
\item the $2a$ two-secants obtained by
joining $P_2$ or $P_3$ to a selected non-fundamental point on the
opposite side through $P_1$;
\item the lines not through any fundamental point. There are $3a^2$ pairs consisting of selected non-fundamental points on
two different sides.  Each of those pairs spans a line not containing $P_1$. Furthermore, each of the $t$ $3$-secant lines determines $3$ such pairs, so there are $t$ $3$-secants, and $3a^2-3t$ $2$-secant lines not containing a fundamental point, spanned by points of $\mathcal{S}$.
\end{itemize}
It follows that
\begin{align*}
\beta_{P_1}(2)
  =\binom{a+2}{2}+2a+3a^2,\quad
\beta_{P_1}(3)
 =\binom{a+2}{3}+t,\quad
\beta_{P_1}(s)
 =\binom{a+2}{s},\qquad s\geq4.
\end{align*}

Using
\[
\mathbb E[\tau_{P_1}]
 =1+\frac{\beta_{P_1}(1)}{\binom{n-1}{1}}
   +\frac{\beta_{P_1}(2)}{\binom{n-1}{2}}
   +\sum_{s=3}^{n-1}\frac{\beta_{P_1}(s)}{\binom{n-1}{s}}
\]
together with the identity of Lemma~\ref{lem:UT} (with $n=3a+3$), 
we obtain
\begin{align}
\mathbb E[\tau_{P_1}]={}&2+\frac{\binom{a+2}{2}+3a^2+2a}{\binom{3a+2}{2}}+\frac{t\binom{3}{3}}{\binom{3a+2}{3}}
     +\sum_{s=3}^{n-1}\frac{\binom{a+2}{s}}{\binom{n-1}{s}}
    \nonumber\\
={}&2+\frac{\binom{a+2}{2}+3a^2+2a}{\binom{3a+2}{2}}
     +\frac{t}{\binom{3a+2}{3}}+T(a+2)\nonumber\\
={}&F(a)+\frac{t}{\binom{3a+2}{3}},
\label{eq:expectation-a-t}
\end{align}
where we set
\begin{equation}
\label{eq:F-a-definition}
F(a)=
\frac{3(17a^3+26a^2+13a+2)}
     {(2a+1)(3a+1)(3a+2)}.
\end{equation}

We now minimize \eqref{eq:expectation-a-t}.  Note that $F$ is
strictly decreasing in $a$, since
\begin{equation}
\label{eq:F-decreasing}
F(a+1)-F(a)
=-\frac{3(a+1)(9a^3+35a^2+32a+4)}
{(2a+1)(2a+3)(3a+1)(3a+2)(3a+4)(3a+5)}<0.
\end{equation}
Put $h=\frac{q-1}{2}$.  If $a\leq h$ it follows that (since $t\geq 0$),
\[
\mathbb E[\tau_{P_1}]\geq F(a)\geq F(h),
\]
with equality only if $a=h$ and $t=0$.

We now exclude $a>h$.  Let $P$ be a non-fundamental point in $\mathcal{S}$ on the side $P_1P_2$. There are $q-1$ lines through $P$ not containing $P_1,P_2,P_3$ and there are $a$ points of $\mathcal{S}$ on $P_1P_3$ and on $P_2P_3$. Hence, the number of general $3$-secants through $P$, which are those containing a point of $\mathcal{S}$ on both $P_1P_3$ and $P_2P_3$ is at least $a+a-(q-1)=2(a-h)$. Since this holds for all $a$ non-fundamental points of $\mathcal{S}$ on the side $P_1P_2$, it follows that $t\geq 2a(a-h).$

It then follows from \eqref{eq:expectation-a-t} that
\begin{align*}
\mathbb E[\tau_{P_1}]
&\geq F(a)+\frac{a(2a-2h)}{\binom{3a+2}{3}}\\
&=F(a)+\frac{4(a-h)}{(3a+1)(3a+2)}.
\end{align*}
We claim that if $a>h$, this expression is always strictly larger than $F(h)$.  We can calculate
\begin{equation}
\label{eq:large-a-difference}
F(a)+\frac{4(a-h)}{(3a+1)(3a+2)}-F(h)
=\frac{(a-h)R(a,h)}
{(2a+1)(3a+1)(3a+2)(2h+1)(3h+1)(3h+2)},
\end{equation}
where
\begin{align*}
R(a,h)={}&a\bigl(144h^3+177h^2+59h+10
                 -a(27h^2+39h+6)\bigr)\\
&\quad+72h^3+102h^2+46h+8.
\end{align*}
Since $a\leq q-1=2h$, we have
\begin{align*}
144h^3+177h^2+59h+10-a(27h^2+39h+6)
&\geq 144h^3+177h^2+59h+10\\
&\qquad-2h(27h^2+39h+6)\\
&=90h^3+99h^2+47h+10>0.
\end{align*}
Thus $R(a,h)>0$ which proves our claim.
Substituting $h=(q-1)/2$ into \eqref{eq:F-a-definition} gives the expression of the statement.

\end{proof}

The claim from the previous theorem needs to be read in the correct way: for a fixed $q$, the best choice is to take a balanced quasi-arc of parameter $(q-1)/2.$ However, the next proposition shows that when letting $q$ tending to infinity, the same limit value is obtained by taking all points on the fundamental triangle.

\begin{proposition}
Let $S$ be the set of all $3q$ points of the fundamental triangle in
$\PG(2,q)$. Then, for each fundamental point $P$,
\[
  \;\mathbb{E}[\tau_{P}]
  \;=\; \frac{17q^{2}-18q+4}{(3q-2)(2q-1)}.
  \;
\]
Hence, for $q\to \infty$, we find the same $\E[\tau_P]$ as in the case of the balanced quasi-arc.
\end{proposition}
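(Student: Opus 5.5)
The plan is to apply Lemma~\ref{lem:alfabeta} with $|S|=3q$, exactly as in the proof of Theorem~\ref{thm:optimal-simple-side-configuration}. By symmetry of the full triangle under permutations of coordinates, it suffices to treat $P=P_1$. I will count $\beta_{P_1}(s)$, the number of $s$-subsets of $S\setminus\{P_1\}$ whose span does not contain $P_1$, following the convention of the proof of Theorem~\ref{thm:optimal-simple-side-configuration}. Note that $S\setminus\{P_1\}$ has $3q-1$ points.

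First, $\beta_{P_1}(1)=3q-1$. For $s=2$, a pair spans $P_1$ exactly when both points lie on $P_1P_2\setminus\{P_1\}$ or both lie on $P_1P_3\setminus\{P_1\}$; each of these sets has $q$ points. Hence
\[
\beta_{P_1}(2)=\binom{3q-1}{2}-2\binom{q}{2}.
\]
For $s=3$, a triple not spanning $P_1$ must be collinear on a line not through $P_1$. Otherwise it spans the plane, and a collinear triple on $P_1P_2$ or $P_1P_3$ spans that line, hence $P_1$. The lines not through $P_1$ carrying three points of $S\setminus\{P_1\}$ are of three kinds:
\begin{itemize}
\item the side $P_2P_3$, with $q+1$ points;
\item lines through exactly one of $P_2,P_3$ other than the sides; each meets $S$ in only two points, so it contributes nothing for $s\ge3$;
\item the $(q-1)^2$ general lines; each meets each side in exactly one non-fundamental point, so it contains exactly $3$ points of $S$.
\end{itemize}
Thus
\[
\beta_{P_1}(3)=\binom{q+1}{3}+(q-1)^2,\qquad \beta_{P_1}(s)=\binom{q+1}{s}\quad (s\ge4).
\]

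Substituting into \eqref{eq:beta} with $n=3q$ gives
\[
\E[\tau_{P_1}]=2+\frac{\binom{3q-1}{2}-2\binom{q}{2}}{\binom{3q-1}{2}}+\frac{(q-1)^2}{\binom{3q-1}{3}}+T(q+1).
\]
By Lemma~\ref{lem:UT} with $m=q+1\le n-1$,
\[
T(q+1)=\frac{(q+1)q(q-1)}{(3q-1)(3q-2)(2q-1)}.
\]
The remaining step is to put all terms over the common denominator $(3q-1)(3q-2)(2q-1)$ and simplify to $\frac{17q^2-18q+4}{(3q-2)(2q-1)}$; I expect a factor $3q-1$ to cancel from the numerator. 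The limit $17/6$ then follows immediately by comparing leading coefficients, which matches Theorem~\ref{thm:optimal-simple-side-configuration}.

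The main obstacle is getting the enumeration of non-spanning subsets exactly right, in particular the contribution of the general lines to $\beta_{P_1}(3)$ and the fact that no larger non-spanning sets exist off $P_2P_3$. After that, the rational-function simplification is routine but error-prone, so I would double-check it numerically at $q=2$ and $q=3$.
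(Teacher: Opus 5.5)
Your proposal is correct and follows the paper's proof essentially step for step: you use the same formula \eqref{eq:beta}, the same classification of lines not through $P_1$, and the same reduction of the $s\ge 3$ terms to $T(q+1)+(q-1)^2T(3)$. Your complementary count $\binom{3q-1}{2}-2\binom{q}{2}$ agrees with the paper's line-by-line count $\frac{7q^2-7q+2}{2}$, and the simplification works as you predicted, since over $(3q-1)(3q-2)(2q-1)$ the numerator is $51q^3-71q^2+30q-4=(3q-1)(17q^2-18q+4)$.
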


\begin{proof}
By the symmetry of the triangle it suffices to compute $\mathbb{E}[\tau_{P_1}]$. For $s=1$ we have $\beta_{P_1}(1)=|S|-1=3q-1$.
Every line $\ell\not\ni P_1$ meets each of the three sides in a point of $S$. There is one line, $P_2P_3$ with $q+1$ points of $\mathcal{S}$, there are $2(q-1)$ lines with exactly one non-fundamental point and one fundamental point of $S$, and there are $(q-1)^2$ general lines which have $3$ points of $S$.

We find for $s=2$,
\[
  \beta_{P_1}(2)
  = \binom{q+1}{2} + 2(q-1)\binom{2}{2} + 3(q-1)^{2}
  = \frac{7q^{2}-7q+2}{2},
\]
and hence,
\[
  \frac{\beta_{P_1}(2)}{\binom{3q-1}{2}}
  = \frac{7q^{2}-7q+2}{(3q-1)(3q-2)} .
\]

For $s\ge 3$ we find
\[
  \sum_{s=3}^{N-1}\frac{\beta_{P_1}(s)}{\binom{N-1}{s}}
  = T(q+1) + (q-1)^{2}\,T(3).
\]
With $n=3q$, we have that
\[
  T(q+1)=\frac{q(q+1)(q-1)}{(3q-1)(3q-2)(2q-1)},
  \qquad
  (q-1)^{2}T(3)=\frac{2(q-1)}{(3q-1)(3q-2)},
\]
so

\[
  \mathbb{E}[\tau_{P_1}]
  = 2 + \frac{q(7q-5)}{(3q-1)(3q-2)}
      + \frac{q(q^{2}-1)}{(3q-1)(3q-2)(2q-1)} 
  = \frac{17q^{2}-18q+4}{(3q-2)(2q-1)} .
\]

\end{proof}

While the balanced quasi-arc of parameter $(q-1)/2$ is the best choice among point sets supported on the sides of the fundamental triangle, this is no longer true when we allow points not on the sides. In the next section, we will see that in general, assigning suitable weights to points that do not lie on the edges of the fundamental simplex can improve the value of $M$; see Remark~\ref{rem:k3} for the planar case.
\section{Other point sets for the DNA problem}\label{sec:constructions}
In the previous Section we saw that balanced quasi-arcs in the plane are good point sets for the random access problem, as was already noted in \cite{GruicaMontanucciZullo2026}. They also showed that by adding multiplicities to the fundamental points, the value of $\E[\tau_P]$ decreased and that the best result was obtained when taking the multiplicity equal to $a$.
 We will now consider  a $3$-dimensional analogue of this phenomenon. We first calculate the expected recovery rate for the full tetrahedron with weighted fundamental points, and then see that we have the same limit for $q$, but better expectation for fixed $q$ when we take a weak arc with $a$ points on each of the edges of the fundamental simplex, and fundamental points with weight $a.$


\subsection{The full tetrahedron with weighted fundamental points}

As in \cite{GruicaMontanucciZullo2026}, we will add multiplicities to the fundamental points. We choose to add multiplicity $q-1$ to those points because \cite{BoruchovskyEtAl2026} suggests that this will be the optimal choice for the multiplicity as it is the size of point set on each of the sides.

Hence, let $\Gscr$ be the multiset obtained from the six edges $P_iP_j$  by taking each fundamental point with multiplicity $q-1$ and each
non-fundamental point with multiplicity $1$. Counting multiplicity, $
n=|\Gscr|=4(q-1)+6(q-1)=10(q-1).$ 

\begin{proposition}\label{lem:full}
For $\Gscr\subseteq\PG(3,q)$ with every fundamental
point $P_i$ of multiplicity $q-1$, we have
\[
\E[\tau_{P_i}(\Gscr)]
=A/B,\] where \begin{align*}A&=68567620q^{7}-596753372q^{6}+2220097355q^{5}-4577254892q^{4}\\&
+5648896736q^{3}-4173395576q^{2}+1709219104q-299377920\end{align*} and \[B=
{42(3q-4)(5q-8)(5q-6)(7q-8)(9q-10)(10q-13)(10q-11)}.\]

In particular
, as $q\to \infty$, $\E[\tau_{P_i}(\Gscr)]\to \frac{3428381}{992250}\approx 3.455158$;
$\frac{\E[\tau_{P_i}(\Gscr)]}{4}\to \frac{3428381}{3969000}\approx0.863788
$.
\end{proposition}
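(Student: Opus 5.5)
The plan is to compute $\E[\tau_{P_1}(\Gscr)]$ with Lemma~\ref{lem:alfabeta}, $\E[\tau_{P_1}]=1+\sum_{s=1}^{n-1}\beta_{P_1}(s)/\binom{n-1}{s}$, where $n=10(q-1)$. Every element of $\Gscr$ is treated as a labelled copy, so the $q-1$ copies of a fundamental point count as distinct elements. By the symmetry of the full tetrahedron under permutations of the coordinates, the answer is the same for every $P_i$, so it suffices to treat $P_1$.

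\textbf{Step 1: reduce to planes not through $P_1$.} Any subset containing a copy of $P_1$ spans $P_1$, so only subsets of the remaining $9(q-1)$ elements contribute to $\beta_{P_1}(s)$. A subset spanning all of $\PG(3,q)$ contains $P_1$ in its span. Hence $\beta_{P_1}(s)$ counts the $s$-subsets whose span is a point, a line or a plane not through $P_1$. I would classify the planes $\pi\not\ni P_1$ and record the multiset size $m_\pi=|\pi\cap\Gscr|$ for each type:
\begin{itemize}
\item[(i)] the face $P_2P_3P_4$, with $m=6(q-1)$ (three fundamental points of weight $q-1$ plus $3(q-1)$ edge points);
\item[(ii)] the planes through an edge $P_jP_k$ ($j,k\neq1$) and a non-fundamental point $X$ of $P_1P_l$, where $\{j,k,l\}=\{2,3,4\}$; there are $3(q-1)$ of them, each with $m=3(q-1)+1$;
\item[(iii)] the planes through exactly one fundamental point $P_j\neq P_1$. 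Such a plane contains the $q-1$ copies of $P_j$ and meets the three edges not through $P_j$ in non-fundamental points, so $m=(q-1)+3$.
\item[(iv)] the $(q-1)^3$ general planes, each meeting the six edges in six distinct non-fundamental points, so $m=6$.
\end{itemize}
For type (iii) I still have to count the planes. The line $P_jP_1$ meets the plane only in $P_j$, so that line must be excluded in the count.

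\textbf{Step 2: inclusion–exclusion.} A subset lying on a line or on a single point is contained in several of these planes. So I would write
\[
\beta_{P_1}(s)=\sum_{\pi\not\ni P_1}\binom{m_\pi}{s}-\sum_{\ell\not\ni P_1}(c_\ell-1)\Bigl[\binom{m_\ell}{s}-\sum_{Q\in\ell}\binom{m_Q}{s}\Bigr]-\sum_{Q\neq P_1}(c_Q-1)\binom{m_Q}{s}.
\]
Here $c_\ell$ and $c_Q$ are the numbers of planes $\pi\not\ni P_1$ through the line $\ell$ or the point $Q$ (only $Q\in\Gscr$ matters), $m_\ell$ is the multiset size of $\ell$, and $m_Q$ is the multiplicity of $Q$, which is $q-1$ for $Q=P_2,P_3,P_4$ and $1$ otherwise. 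The relevant lines are the three edges of the face $P_2P_3P_4$, the lines $P_jX$ with $X$ on an edge, and the general 2- and 3-secants. I would count them face by face, using the planar counts already done in the proof of the proposition on the full triangle.

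\textbf{Step 3: sum over $s$.} Every term is of the form $\binom{m}{s}$ with $m$ either a fixed small constant ($1,2,3,6$, or $q+2$, \dots) or linear in $q$. I would use the full-range analogue of Lemma~\ref{lem:UT}: by the same telescoping argument,
\[
\sum_{s=1}^{n-1}\frac{\binom{m}{s}}{\binom{n-1}{s}}=\frac{m}{n-m}\qquad(m\le n-1),
\]
and more generally the sum from $s=r$ can be expressed through $g(r)$. This turns $\E[\tau_{P_1}]$ into a finite rational function of $q$. Bringing it to a common denominator should produce the factors $(3q-4),(5q-8),(5q-6),(7q-8),(9q-10),(10q-13),(10q-11)$, which correspond to the values $n-m$ for the plane and line sizes above. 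The limit as $q\to\infty$ then follows by comparing leading coefficients.

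The main obstacle is Step 2. The correction terms for lines and for the heavy fundamental points lying in many planes of types (ii) and (iii) must be exactly right, since an error there shifts the leading constant. I would check the final formula against a brute-force enumeration for $q=2,3$ before simplifying.
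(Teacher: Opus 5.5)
Your plan is correct and is essentially the paper's proof. It uses the same reduction via Lemma~\ref{lem:alfabeta} to subsets whose span is a point, line or plane not through $P_1$, the same plane types with sizes $6(q-1)$, $3q-2$, $q+2$, $6$ (the type (iii) count you left open is $3(q-1)^2$), and an inclusion--exclusion which, with $c_\ell=q$ and $c_Q=q^2$, expands to exactly the paper's coefficients $1,\,-(q-1),\,(q-1)^2(q+1)$; the only difference is bookkeeping: your exact-span corrections make single-point lines drop out, so you can sum over all $s\ge1$ with $\sum_{s\ge1}\binom{m}{s}/\binom{n-1}{s}=m/(n-m)$ instead of handling $s=1,2$ by hand and using $T(m)$ for $s\ge3$, and the resulting values $n-m$ give precisely the seven linear factors of $B$.
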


\begin{proof}
By the symmetry of $\Gscr$, it is enough to compute $\E[\tau_{P_1}]$, where we will use the expression given in \eqref{eq:beta}. To determine $\beta_{P_1}(\Gscr,s)$
directly for each $s$ we need to find the number of lines and planes not containing $P_1$, together with the number of points of $\Gscr$ contained in them.  The $q^3$ planes not containing $P_1$ are as follows:
\begin{itemize}
    \item One plane ($P_2P_3P_4$), which has $6q-6$ points of $\Gscr$;
    \item $3(q-1)$ planes with two fundamental points, which have $3q-2$ points of $\Gscr$;
    \item $3(q-1)^2$ planes with one fundamental point, which have $q+2$ points of $\Gscr$;
    \item $(q-1)^3$ planes with no fundamental points, which have $6$ points of $\Gscr$.
\end{itemize}

The lines not containing $P_1$ but with at least two points of $\Gscr$ are as follows:
\begin{itemize}
    \item $3$ lines with $3q-3$ points of $\Gscr$ (the sides $P_2P_3,P_2P_4,P_3P_4$);
    \item $9(q-1)$ lines with $q$ points of $\Gscr$ (a fundamental point $P_2,P_3,P_4$ joined
    to a non-fundamental point of an opposite edge);
    \item $3(q-1)^2$ lines with $q-1$ points of $\Gscr$ (a fundamental point $P_2,P_3,P_4$ with
    no other point of $\Gscr$);
    \item $4(q-1)^2$ lines with $3$ points of $\Gscr$ (all lines not through a fundamental point
    contained in one of the four faces of the tetrahedron);
    \item $3(q-1)^2$ lines with $2$ points of $\Gscr$ (joining two non-fundamental points on
    opposite edges).
\end{itemize}
Finally there are three points of $\Gscr$ not containing $P_1$ of multiplicity exceeding $1$,
namely $P_2,P_3,P_4$, each with multiplicity $q-1$.

It is easy to see that $\beta_{P_1}(\Gscr,1)=9(q-1)$, so the first two terms in the expression of $\E[\tau_F]$ are given by 
\[
1+\frac{\beta_{P_1}(\Gscr,1)}{\binom{n-1}{1}}
=1+\frac{9(q-1)}{10q-11}.
\]

For $s=2$, the subsets not containing $P_1$ in their span are all pairs of the $9(q-1)$ points different from $P_1$, except those on the lines $P_1P_2,P_1P_3,P_1P_4$, all of them having $2(q-1)$ points. Of the pairs contained in those, the $\binom{q-1}{2}$ pairs made up of two copies of $P_i$ do not span $P_1$. Hence
\[
\beta_{P_1}(\Gscr,2)
=\binom{9q-9}{2}-3\!\left[\binom{2q-2}{2}-\binom{q-1}{2}\right]
=3(q-1)(12q-13),
\]
and the $s=2$ contribution is
\[
\frac{\beta_{P_1}(\Gscr,2)}{\binom{n-1}{2}}
=\frac{3(q-1)(12q-13)}{(5q-6)(10q-11)} .
\]

For $s\geq 3$, in order to count subsets of size $s$ which do not contain $P_1$ in their span, we
will distinguish between the dimension of the space the subset spans, which is now a plane, a
line, or a single point.

Since every line not containing $P_1$ lies on $q$ planes not containing $P_1$, and every point
$\ne P_1$ lies on $q^2$ planes and $q^2+q$ lines not containing $P_1$, we see that for $s\geq 3$,
\[
\beta_{P_1}(\Gscr,s)
=\sum_{\Pi\not\ni P_1}\binom{c_\Pi}{s}
-(q-1)\sum_{\ell\not\ni P_1}\binom{c_\ell}{s}
+(q-1)^2(q+1)\sum_{Q\not\ni P_1}\binom{c_Q}{s},
\]
the point coefficient being $q^3-q^2-q+1=(q-1)^2(q+1)$. Using \eqref{eq:Tdef}, we see that
\[
\sum_{s=3}^{n-1}\frac{\beta_{P_1}(\Gscr,s)}{\binom{n-1}{s}}
=\sum_{\Pi\not\ni P_1}T(c_\Pi)-(q-1)\sum_{\ell\not\ni P_1}T(c_\ell)
+3(q-1)^2(q+1)\,T(q-1).
\]

We find
\[
\begin{aligned}
\E[\tau_{P_1}]
={}&1+\frac{9(q-1)}{10q-11}+\frac{3(q-1)(12q-13)}{(5q-6)(10q-11)}+T(6q-6)+3(q-1)T(3q-2)\\&+3(q-1)^2T(q+2)+(q-1)^3T(6)\\
&{}-(q-1)\left(3\,T(3q-3)+9(q-1)T(q)+4(q-1)^2T(3)\right)\\
&+6(q-1)^2\,T(q-1).
\end{aligned}
\]
The corresponding values of $T$ are given by:

\[
\begin{array}{ll}
T(6q-6)=\dfrac{3(3q-4)(6q-7)}{2(5q-6)(10q-11)},
   & \quad
T(3q-2)=\dfrac{3(q-1)(3q-4)(3q-2)}{2(5q-6)(7q-8)(10q-11)},\\[2ex]
T(q+2)=\dfrac{q(q+1)(q+2)}{6(3q-4)(5q-6)(10q-11)},
   & \quad
T(6)=\dfrac{30}{(5q-8)(5q-6)(10q-11)},\\[2ex]
T(3q-3)=\dfrac{3(3q-5)(3q-4)}{14(5q-6)(10q-11)},
   & \quad
T(q)=\dfrac{q(q-1)(q-2)}{2(5q-6)(9q-10)(10q-11)},\\[2ex]
T(3)=\dfrac{3}{(5q-6)(10q-13)(10q-11)},
   & \quad
T(q-1)=\dfrac{(q-2)(q-3)}{18(5q-6)(10q-11)}.
\end{array}
\]

Collecting all contributions over the common denominator
$42(3q-4)(5q-8)(5q-6)(7q-8)(9q-10)(10q-13)(10q-11)$ and simplifying yields
the expression found in the statement. In partial fractions, this is
\begin{align*}
\E[\tau_{P_i}(\Gscr)]
=&\frac{3428381}{992250}
+\frac{3}{250(10q-11)}
-\frac{81}{250(10q-13)}
-\frac{10}{81(9q-10)}\\
&+\frac{30}{49(7q-8)}
-\frac{3}{125(5q-6)}
+\frac{81}{125(5q-8)}
+\frac{10}{27(3q-4)} .
\end{align*}

Letting
$q\to\infty$ leaves only the constant term $\tfrac{3428381}{992250}\approx 3.4551585$ (to which the
expectation decreases monotonically). 
\end{proof}

\subsection{The weak arc with weights on the fundamental points}

Let $P_1,\dots,P_4$ be the four fundamental points of $\PG(3,q)$, and let $\Gscr$ be a weak arc
consisting of $a\ge1$ non-fundamental points on each edge $P_iP_j$ (each of multiplicity $1$) together with
the four fundamental points, each taken with multiplicity $a$. Counting multiplicity, $
n=|\Gscr|=4a+6a=10a$.

\begin{proposition}\label{lem:weakarc}
Let $\Gscr\subseteq\PG(3,q)$ be as above then for each fundamental point $P_i$, then
\[
\E[\tau_{P_i}(\Gscr)]
=\frac{13713524a^{6}-13855940a^{5}+5676627a^{4}-1207089a^{3}+140575a^{2}-8515a+210}
{14(5a-1)(7a-1)(9a-1)(9a-2)(10a-1)(10a-3)} ,
\]
which only depends on $a$. It decreases monotonically from
$\tfrac{79}{21}\approx3.7619$ at $a=1$ to
\[
\lim_{a\to\infty}\E[\tau_{P_i}(\Gscr)]=\frac{3428381}{992250}\approx3.4551585 ;\qquad \lim_{a\to\infty}\E[\tau_{P_i}(\Gscr)]/4\approx 0.863788.
\]
\end{proposition}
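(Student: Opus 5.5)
The plan is to follow the proof of Proposition~\ref{lem:full} step by step, with $q-1$ replaced by $a$ wherever a count of points of $\Gscr$ appears, and with the weak arc property used to control the general planes and lines. By symmetry (the weak arc property is invariant under permuting coordinates, and the configuration has $a$ points on every edge) it suffices to treat $P_1$. We use \eqref{eq:beta} with $n=10a$. The first two terms are elementary. Apart from $P_1$ (multiplicity $a$), $\Gscr$ has $9a$ elements, so $\beta_{P_1}(1)=9a$. Pairs spanning $P_1$ are exactly the pairs inside one of $P_1P_2,P_1P_3,P_1P_4$ (each carrying $2a$ elements besides $P_1$), other than pairs of two copies of $P_j$. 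This gives
\[
\beta_{P_1}(2)=\binom{9a}{2}-3\Bigl[\binom{2a}{2}-\binom{a}{2}\Bigr].
\]

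For $s\ge 3$ I would use the same M\"obius inversion on subspaces not through $P_1$ as in Proposition~\ref{lem:full}:
\[
\beta_{P_1}(s)=\sum_{\Pi\not\ni P_1}\binom{c_\Pi}{s}-(q-1)\sum_{\ell\not\ni P_1}\binom{c_\ell}{s}+(q-1)^2(q+1)\sum_{Q\ne P_1}\binom{c_Q}{s}.
\]
Summing against $1/\binom{n-1}{s}$ turns every term into a value $T(c)$. The key simplification is that $T(c)=0$ for $c\le 2$, so only subspaces carrying at least $3$ elements of $\Gscr$ matter.

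I would then enumerate these subspaces.
\begin{itemize}
\item Points: $P_2,P_3,P_4$, each contributing $T(a)$.
\item Lines with at least $3$ elements: the three sides opposite $P_1$ ($3a$ elements each); the $9a$ lines joining $P_j$ ($j\neq 1$) to an arc point on an edge not through $P_j$, the edge $P_1P_j$ excluded ($a+1$ elements); and the remaining $3(q-1-a)$ lines through $P_j$ hitting such an edge outside the arc ($a$ elements). There are no general lines with $3$ arc points. To see this, suppose a line $\ell$ in a face meets three edges in arc points. Take $X$ an arc point on an edge joining the opposite fundamental point to the face. Then $\langle \ell,X\rangle$ is a general plane with $4$ points, contradicting the weak arc property.
\item Planes: the face $P_2P_3P_4$ ($6a$ elements). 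The $3(q-1)$ planes through two of $P_2,P_3,P_4$: $3a$ of them carry $3a+1$ elements and $3(q-1-a)$ carry $3a$. The planes through exactly one fundamental point $P_j$ carry $a$ plus $0,1$ or $2$ elements; their numbers follow from counting lines of the opposite face by the number of arc points on them, using again that no such line has $3$. Finally the general planes: only those with exactly $3$ arc points contribute, each giving $T(3)$.
\end{itemize}

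The main obstacle is counting the general planes with exactly $3$ points. I would show that they correspond bijectively to triples of arc points on three distinct edges not forming a face triangle. Each such triple spans a plane which cannot contain a fundamental point: containing one forces containing an edge, and hence, by chasing through the triple, two more fundamental points. By the weak arc property such a plane contains no fourth point. Triples on a face triangle are non-collinear by the cap argument, so they span the face, which is not general. This gives $\bigl(\binom{6}{3}-4\bigr)a^3=16a^3$ general planes with $3$ points, a count independent of $q$.

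Substituting all counts and the closed form of $T$ from Lemma~\ref{lem:UT} (with $n=10a$), the $q$-dependent terms should cancel. This cancellation is where I expect the computation to be delicate, and I would verify it symbolically. Simplifying over the common denominator should then give the stated rational function of $a$. The value at $a=1$ and the limit follow by direct evaluation. For monotonicity I would compute $\E(a+1)-\E(a)$ as a single fraction with sign-definite numerator, or use a partial-fraction expansion as in Proposition~\ref{lem:full}.
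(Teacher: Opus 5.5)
\textbf{The gap: your list of lines is incomplete.} Your M\"obius inversion over all subspaces missing $P_1$ is a valid alternative to the paper's route. Your plane counts are right, as are the $9a$ lines with $a+1$ elements and the $16a^3$ general $3$-point planes. But your list of lines with at least $3$ elements omits some lines. For each $j\in\{2,3,4\}$ there are $(q-1)^2$ lines $P_jR$ with $R$ a point of the face opposite $P_j$ lying on none of its edges. These lines avoid $P_1$ and lie in no face of the tetrahedron. Each carries the $a$ copies of $P_j$ and nothing else, so for $a\ge 3$ they contribute $-(q-1)\cdot 3(q-1)^2\,T(a)$. Only with them does the $T(a)$-coefficient collapse:
\[
3(q-1)^2(q+1)-9(q-1)(q-1-a)-3(q-1)^3+3\bigl((q-1)^2-3a(q-1)+3a^2\bigr)=9a^2 .
\]
The terms are, in order:
\begin{itemize}
\item the point terms for $P_2,P_3,P_4$;
\item your face lines through a vertex and a non-arc edge point;
\item the missing lines;
\item the planes through exactly one of $P_2,P_3,P_4$ containing no non-fundamental point of $\Gscr$.
\end{itemize}
With your list as written, the coefficient is $9a^2+3(q-1)^3$. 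So the cancellation of the $q$-dependent terms you anticipate would fail; the omission is harmless only for $a\le 2$, where $T(a)=0$.

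\textbf{After the fix, and how the paper avoids the issue.} Once these lines are added, the coefficients of $T(6a),T(3a+1),T(3a),T(a+2),T(a+1),T(a),T(3)$ are $1,3a,-3a,9a^2,-18a^2,9a^2,16a^3$. This is exactly the paper's expression for $\beta_{P_1}(s)$, and the rest of your plan goes through. The paper avoids the trap by a different bookkeeping. It classifies $s$-subsets by the exact subspace they span, and lists only subspaces actually spanned by elements of $\Gscr$. So $q$ never appears, and no degenerate subspaces (lines or planes whose $\Gscr$-content spans something smaller) have to be tracked. The price is that inside each spanned subspace one must subtract by hand the subsets spanning smaller flats. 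Your route instead needs a complete census of all lines and planes missing $P_1$, including those with purely degenerate content.
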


\begin{proof}
By the symmetry of $\Gscr$ it is enough to compute $\E[\tau_{P_1}]$, using \eqref{eq:beta}. Since a
subset containing any copy of $P_1$ already spans $P_1$, only the $9a$ points distinct from $P_1$
contribute to $\beta_{P_1}$; these are the three vertices $P_2,P_3,P_4$ (each of multiplicity $a$)
and the $6a$ non-fundamental points.

The subspaces not containing $P_1$, spanned by subsets of $\Gscr$ are as folllows. The intersection sizes mentioned are counted with multiplicity.

\begin{itemize}
    \item Points: the three vertices $P_2,P_3,P_4$, each having intersection size $a$ with $\Gscr.$
    \item Lines: the three opposite edges $P_2P_3,P_2P_4,P_3P_4$ each with $2a+a=3a$ points of $\Gscr$; and the $9a$ lines joining a fundamental point
    $P_j$ $(j\ne1)$ to a non-fundamental point of one of the three edges not through $P_j$, each meeting $\Gscr$ in $a+1$ points. 
    \item Planes: the opposite face $P_2P_3P_4$ with $6a$ points of $\Gscr$, the $3a$ planes through one of the edges $P_2P_3,P_2P_4,P_3P_4$ together with one further non-fundamental point (each with $3a+1$ points of $\Gscr$), the $9a^2$ planes through one fundamental point and two non-fundamental points of $\Gscr$, each with $a+2$ points of $\Gscr$, and finally the $16a^3$ planes with $3$ non-fundamental points.
\end{itemize}

Using this, we see that $\beta_{P_1}(\Gscr,0)=1$, $\beta_{P_1}(\Gscr,1)=9a$. For $s=2$, all pairs of points taken from the $9a$ points different from $P_1$ will span a subspace not through $P_1$, except those contained in $P_1P_2,P_1P_3$ or $P_1P_4$. Of the $\binom{2a}{2}$ choices of pairs on $P_1P_i$, we need to exclude those made up of twice $P_i$, so we need to subtract $\binom{a}{2}$:
\[
\beta_{P_1}(\Gscr,2)
=\binom{9a}{2}
-3\left[\binom{2a}{2}-\binom{a}{2}\right]
=3a(12a-1).
\]
For $s\geq 3$, we count the subsets of size $s$ which do not contain $P_1$
in their span according to the dimension of the subspace they span. The subsets spanning a point contribute $3\binom{a}{s}$.
The subsets spanning a line are as follows. The three opposite edges
$P_2P_3,P_2P_4,P_3P_4$ each contain $3a$ points of $\Gscr$, counted with
multiplicity. On each such edge we must subtract the subsets consisting
entirely of copies of one of its two fundamental points. This gives
\[
3\left(\binom{3a}{s}-2\binom{a}{s}\right).
\]
Similarly, the $9a$ lines joining one of the fundamental points
$P_2,P_3,P_4$ to a chosen non-fundamental point on one of the three sides
not through that fundamental point contribute
\[
9a\left(\binom{a+1}{s}-\binom{a}{s}\right).
\]

We now count the subsets spanning a plane. The opposite face $P_2P_3P_4$
contains $6a$ points of $\Gscr$, counted with multiplicity. The subsets
contained in this face but not spanning the face are the subsets spanning a point of
 $P_2,P_3,P_4$, the subsets spanning one of $P_2P_3,P_3P_4,P_2P_4$,
and the subsets spanning one of the $3a$ lines joining one of $P_2,P_3,P_4$ to a chosen
non-fundamental point on the opposite side of the face $P_2P_3P_4$. Hence, $P_2P_3P_4$
contributes
\[
\binom{6a}{s}
-3\binom{a}{s}
-3\left(\binom{3a}{s}-2\binom{a}{s}\right)
-3a\left(\binom{a+1}{s}-\binom{a}{s}\right).
\]

There are $3a$ planes through one of the edges
$P_2P_3,P_2P_4,P_3P_4$ together with one chosen non-fundamental point on the
opposite side through $P_1$. Each such plane contains $3a+1$ points of
$\Gscr$. In such a plane, the proper subspaces spanned by subsets of those $3a+1$ points are the subsets contained in the edge $P_iP_j$ itself, the two lines joining the chosen non-fundamental point to one of the two fundamental points $P_i,P_j$, and the subsets composed of $s$ copies of either $P_i$ or $P_j$. Thus these planes contribute
\[
3a\left(
\binom{3a+1}{s}
-\left(\binom{3a}{s}-2\binom{a}{s}\right)
-2\left(\binom{a+1}{s}-\binom{a}{s}\right)-2\binom{a}{s}
\right).
\]

There are $9a^2$ planes through one fundamental point and two chosen
non-fundamental points of $\Gscr$. Each contains $a+2$ points of $\Gscr$.
These
planes contribute
\[
9a^2\left(
\binom{a+2}{s}
-2\left(\binom{a+1}{s}-\binom{a}{s}\right)-\binom{a}{s}
\right).
\]

Finally, the $16a^3$ planes with three non-fundamental points contribute
\[
16a^3\binom{3}{s}.
\]

Therefore, for $s\geq 3$,
\[
\begin{aligned}
\beta_{P_1}(\Gscr,s)
={}&
3\binom{a}{s}
+3\left(\binom{3a}{s}-2\binom{a}{s}\right)
+9a\left(\binom{a+1}{s}-\binom{a}{s}\right)\\
&{}+\binom{6a}{s}
-3\binom{a}{s}
-3\left(\binom{3a}{s}-2\binom{a}{s}\right)
-3a\left(\binom{a+1}{s}-\binom{a}{s}\right)\\
&{}+3a\left(
\binom{3a+1}{s}
-\binom{3a}{s}
-2\binom{a+1}{s}
+2\binom{a}{s}
\right)\\
&{}+9a^2\left(
\binom{a+2}{s}
-2\binom{a+1}{s}
+\binom{a}{s}
\right)
+16a^3\binom{3}{s}.
\end{aligned}
\]

\[
\begin{aligned}
\beta_{P_1}(\Gscr,s)
={}&
\binom{6a}{s}
+3a\binom{3a+1}{s}
+9a^2\binom{a+2}{s}
+16a^3\binom{3}{s}\\
&{}-3a\binom{3a}{s}
-18a^2\binom{a+1}{s}
+9a^2\binom{a}{s}.
\end{aligned}
\]
We find, using \eqref{eq:Tdef}

\[
\begin{aligned}
\E[\tau_{P_1}(\Gscr)]
={}&1+\frac{9a}{10a-1}
+\frac{3a(12a-1)}{(10a-1)(5a-1)}\\
&{}+T(6a)
+3aT(3a+1)
+9a^2T(a+2)
+16a^3T(3)\\
&{}-3aT(3a)
-18a^2T(a+1)
+9a^2T(a).
\end{aligned}
\]

We find the following values, with $n=10a$,


\[
\begin{array}{ll}
T(6a)=\dfrac{3(6a-1)(3a-1)}{2(10a-1)(5a-1)},
   & \quad
T(3a+1)=\dfrac{3a(3a-1)(3a+1)}{2(10a-1)(5a-1)(7a-1)},\\[2ex]
T(a+2)=\dfrac{a(a+1)(a+2)}{2(10a-1)(5a-1)(9a-2)},
   & \quad
T(3)=\dfrac{3}{(10a-1)(5a-1)(10a-3)},\\[2ex]
T(3a)=\dfrac{3(3a-1)(3a-2)}{14(10a-1)(5a-1)},
   & \quad
T(a+1)=\dfrac{a(a-1)(a+1)}{2(10a-1)(5a-1)(9a-1)},\\[2ex]
T(a)=\dfrac{(a-1)(a-2)}{18(10a-1)(5a-1)}.
   &
\end{array}
\]
Hence, 
\[
\begin{aligned}
\E[\tau_{P_1}(\Gscr)]
={}&1+\frac{9a}{10a-1}
+\frac{3a(12a-1)}{(10a-1)(5a-1)}
+\frac{3(6a-1)(3a-1)}
{2(10a-1)(5a-1)}+\frac{9a^2(3a-1)(3a+1)}
{2(10a-1)(5a-1)(7a-1)}\\
&{}+\frac{9a^3(a+1)(a+2)}
{2(10a-1)(5a-1)(9a-2)}+\frac{48a^3}
{(10a-1)(5a-1)(10a-3)}-\frac{9a(3a-1)(3a-2)}
{14(10a-1)(5a-1)}\\
&{}-\frac{9a^3(a-1)(a+1)}
{(10a-1)(5a-1)(9a-1)}+\frac{a^2(a-1)(a-2)}
{2(10a-1)(5a-1)}.
\end{aligned}
\]

In partial fractions,
\[
\begin{aligned}
\E[\tau_{P_1}(\Gscr)]
={}&
\frac{3428381}{992250}
+\frac{6}{125(10a-1)}
+\frac{162}{125(10a-3)}\\
&{}-\frac{20}{81(9a-1)}
+\frac{80}{81(9a-2)}
+\frac{30}{49(7a-1)}
-\frac{48}{125(5a-1)}.
\end{aligned}
\]

Letting $a\to\infty$ gives
$\tfrac{3428381}{992250}\approx3.4551585$, to which the expectation decreases .
\end{proof}

  \begin{remark}
      The asymptotic values found in the two previous constructions (for $q\to \infty$ and for $a\to \infty $ respectively) are exactly the same. However, it is true that for given size of the chosen point-set, the weak arc construction will give a smaller value of $\E[\tau_{P_1}(\Gscr)]$; this can be seen by evaluating the polynomial found in Proposition \ref{lem:weakarc} for $a=q$ and comparing it to the polynomial found in Proposition \ref{lem:full}.
  \end{remark}

  \begin{remark} 
This construction is closely related to the recovery-complete matrices
 $G_4(a,a)$ introduced in \cite{BoruchovskyEtAl2026}.  In their construction, the matrix $G_4(x,y)$ is built on $K_4$, with $x$ columns on each of the six edges
and $y$ copies of each vertex, so its length is $6x+4y$. Identifying the
six edges of $K_4$ with the edges of the fundamental tetrahedron and its
four vertices with the fundamental points $P_1,\dots,P_4$, our
construction places $a$ non-fundamental points on each edge and gives
each fundamental point multiplicity $a$; it thus has length $6a+4a=10a$ and realises $G_4(a,a)$.

The two objects arise differently. Being \emph{recovery complete}, the
matrices of \cite{BoruchovskyEtAl2026} admit a purely combinatorial recovery rule: a requested vertex $P_i$ is recoverable from a sampled
submultiset precisely when the connected component of $P_i$ in the
induced submultigraph of $K_4$ contains a cycle. Our construction obeys the same rule but for geometric reasons, via the weak-cap and weak-arc properties; their recovery condition (the component of $P_i$ contains a cycle) is equivalent to the geometric recovery condition ($P_i$ lies in the span of the sample).
The recovery expectation we have calculated above was computed directly from the incidence structure of points, lines and planes, without needing the algebraic/graph-theoretic  machinery of
\cite{BoruchovskyEtAl2026}, we also do not need a large field to construct our set. Of course, as was to be expected, our limiting value for recovery expectation coincide with that found for $G_4(a,a)$ so can be seen as an independent verification of their result.

  \end{remark}

\subsection{A geometric view on the Wang-Yaakobi construction}\label{subs:WY}
In the previous constructions, our point set was sparse in $\PG(3,q)$. Recently, Wang and Yaakobi gave a construction with a particularly good  recovery expectation where they considered the {\em entire} point set of $\PG(3,q)$ with multiplicities.
We now recall the construction of Wang and Yaakobi \cite{WangYaakobi2026} and reinterpret it geometrically in terms of the fundamental tetrahedron in $\PG(3,q)$. Let $w_1,\ldots,w_k$ be nonnegative rational numbers
satisfying
\[
 \sum_{j=1}^{k}\binom{k}{j}(q-1)^j w_j=1.
\]
Choose a positive integer $n$ such that $nw_j$ is an integer for every $j$.
The associated generator matrix $G_{\mathbf w}$ is obtained by taking, for
every nonzero vector $\mathbf u\in\F_q^k$ of Hamming weight $j$, exactly
$nw_j$ copies of $\mathbf u$ as columns. 

In geometric terms, the vectors with a given Hamming weight $j$ correspond respectively, to the vertices of the fundamental tetrahedron ($j=1$), non-fundamental points on an edge ($j=2$), points on a face not an edge ($j=3$) and finally, the points not on any face of the fundamental tetrahedron ($j=4$). 

Wang and Yaakobi derive a general upper-bound formula for such distributions
and optimize the parameters $w_j$. For $k=4$, however, their paper records the
result only in a remark, stating that the method yields
\[
 \limsup_{q\to\infty}M(\Gscr(q,4))<0.862882\cdot4.
\]
They do not provide an explicit optimizing quadruple
$(w_1,w_2,w_3,w_4)$.

Using the same methods as used for the point sets in the previous Subsections, we can show that the following discrete version of their construction, with integer multiplicities, comes very close to their optimised distribution.

Let $\Omega_j$ be the set of points in $\PG(3,q)$ determined by a vector of Hamming weight $j$. Then
\[
|\Omega_1|=4,\qquad |\Omega_2|=6(q-1),\qquad
|\Omega_3|=4(q-1)^2,\qquad |\Omega_4|=(q-1)^3.
\]

Define the projective multiset
\begin{equation}
\label{eq:full-strata-construction}
\mathcal H_q
 =5(q-1)^3\Omega_1\cup4(q-1)^2\Omega_2\cup3\Omega_4.
\end{equation}
Note that we do not take points on the face of the tetrahedron, except those on edges.
We see that the size of the set is $47(q-1)^3$.
 
\begin{proposition}\label{prop:fullstrata}
Let $\mathcal H_q\subseteq\PG(3,q)$ be the multiset \eqref{eq:full-strata-construction}.
Then for each fundamental point $P_i$ the value $\E[\tau_{P_i}(\mathcal H_q)]$ is
independent of $i$ and we have
\[
\lim_{q\to\infty}\E[\tau_{P_i}(\mathcal H_q)]
   =\frac{37968534049}{11001615240}\approx 3.4511781 ,
\quad
\lim_{q\to\infty}\frac{\E[\tau_{P_i}(\mathcal H_q)]}{4}
   \approx 0.8627945 .
\]
\end{proposition}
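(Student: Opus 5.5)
Independence of $i$ follows from symmetry: permuting coordinates and scaling coordinates by nonzero scalars fixes the strata $\Omega_j$, and hence fixes $\mathcal H_q$. These maps act transitively on the fundamental points, so it is enough to compute $\E[\tau_{P_1}]$. We use the expression \eqref{eq:beta} with $N=47(q-1)^3$, and as in Propositions \ref{lem:full} and \ref{lem:weakarc} we split the $\beta_{P_1}(s)$ by the dimension of the span of the subset.

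\textbf{Small $s$.} For $s=1$, $\beta_{P_1}(1)=N-5(q-1)^3$, so the $s=1$ term tends to $42/47$. For $s=2$, a pair fails to span $P_1$ unless it lies on a line through $P_1$ and is not two copies of the same point $Q\ne P_1$. The lines through $P_1$ carrying mass are:
\begin{itemize}
\item[(a)] the three edges $P_1P_j$, each with mass $5(q-1)^3$ at $P_j$ and $4(q-1)^3$ on the non-fundamental points;
\item[(b)] the lines through $P_1$ and a point of $\Omega_4$. Each such line meets a face $X_1=0$ in a point of $\Omega_3$, which has weight $0$, and otherwise contains $q-1$ points of $\Omega_4$ of weight $3$, so mass $3(q-1)$;
\item[(c)] the lines through $P_1$ and a point of $\Omega_3$ in faces containing $P_1$. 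These carry points of weight $0$ or $\Omega_2$-points only where they meet an edge, so they can be ignored to leading order.
\end{itemize}
This gives $\beta_{P_1}(2)$ exactly as a polynomial in $q$, and only the $q^6$ coefficient matters for the limit.

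\textbf{General $s\ge3$.} We use inclusion–exclusion over flats not containing $P_1$, in the spirit of the Möbius-type formula in Proposition \ref{lem:full}:
\[
\beta_{P_1}(s)=\sum_{U\not\ni P_1}\mu(U)\binom{c_U}{s},
\]
where the sum runs over points, lines and planes avoiding $P_1$, $c_U$ is the mass of $U$, and $\mu$ is the Möbius coefficient of the lattice of subspaces avoiding $P_1$. Summing over $s$ with Lemma \ref{lem:UT}, each flat contributes $\mu(U)T(c_U)$, where $T(m)\approx m^3/(N^2(N-m))$ when $m=\Theta(N)$ and is negligible otherwise.

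So the real work is to classify planes and lines not through $P_1$ by the mass they carry, keeping only the classes whose count times $T(c)$ survives in the limit.
\begin{itemize}
\item Planes. There are four types: the face $P_2P_3P_4$; planes through an edge $P_jP_k$ ($j,k\ne1$), which are $3(q-1)$ in number; planes through one $P_j$, which are $3(q-1)^2$; and general planes, which are $(q-1)^3$. Their masses are respectively $\Theta(q^3)$ (vertex weights dominate), $\Theta(q^3)$, $\Theta(q^3)$ (one vertex has weight $5(q-1)^3$), and $\Theta(q^2)$ (six edge points of weight $4(q-1)^2$, plus about $3q^2$ from $\Omega_4$).
\item Lines. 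Lines carrying a vertex or a full edge have mass $\Theta(q^3)$, and lines carrying edge points have mass $\Theta(q^2)$. For the $\Omega_4$-only lines, the Möbius weight times $T$ must be checked and should vanish in the limit.
\end{itemize}
Each class is counted and its mass recorded with the exact leading coefficient, for example $5+8+3\cdot\ldots$.

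\textbf{Assembling the limit.} Put all of this into $1+\sum_s\beta_{P_1}(s)/\binom{N-1}{s}$ and let $q\to\infty$, dividing by the appropriate powers of $q$. The result should be a rational number, claimed to be $\frac{37968534049}{11001615240}$. The main obstacle is the bookkeeping for planes through a single fundamental point and for general planes. Their numbers grow like $q^2$ and $q^3$ while their masses are only a constant fraction of $N$ or of order $q^2$. The exact distribution of points of $\Omega_2$ and $\Omega_4$ on these planes depends on the coefficients of the plane, so each mass has to be averaged: planes with different intersection sizes must be grouped, or one must show the intersection is constant up to $O(q)$. If exact uniformity fails, I would instead compute the leading-order masses per plane type and show that the fluctuations contribute $o(1)$. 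Finally I would check the resulting rational number symbolically.
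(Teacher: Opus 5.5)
\paragraph{Assessment.} Your setup is the paper's. You reduce to $P_1$ by the monomial symmetry and use \eqref{eq:beta}. You handle $s=1,2$ directly; that part is correct, and your type (c) lines in fact contribute exactly nothing, giving $\beta_{P_1}(2)=\tfrac32(q-1)^3(532q^3-1580q^2+1561q-524)$. For $s\ge 3$ you sum $T(c_U)$ over flats avoiding $P_1$, with weights $1$, $-(q-1)$, $(q-1)^2(q+1)$ for planes, lines and points.

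\paragraph{The gap: evaluating the $s\ge3$ sum.} You propose to keep only the classes whose ``count times $T(c)$ survives'', to record masses with their leading coefficients, and to treat $T(m)$ as negligible unless $m=\Theta(N)$. This fails, because the weights are large and individual classes diverge and then cancel:
\begin{itemize}
\item The three vertices contribute $3(q-1)^2(q+1)\,T(5(q-1)^3)\sim\frac{375}{92778}q^3$. To leading order this is cancelled by the $3(q-1)^2$ lines through a vertex not lying in a face.
\item The next $\Theta(q^2)$ terms cancel among three sources: the factor $q+1$ in the point weight, the $9(q-1)$ face lines through a vertex, and the $3(q-1)^2$ planes through one vertex.
\item The $6(q-1)$ edge points have mass only $4(q-1)^2$, yet contribute $\sim\frac{384}{47^3}q$. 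This is matched by $-\frac{384}{47^3}q$ from the $6(q-1)^3$ generic lines through an edge point.
\item General planes, generic lines in a face, and lines meeting two opposite edges all contribute $\Theta(1)$, although their masses are $O(q^2)$.
\end{itemize}
The constant that survives depends on lower-order data, such as $q+1$ versus $q-1$, or the $(q-2)w_4$ part of a line's mass. A leading-order bookkeeping therefore produces a divergent or wrong value. Every class must be counted exactly and every $T(c_U)$ evaluated exactly (in practice to relative order $q^{-3}$). The paper does exactly this: it tabulates four point types, seven line types and four plane types with exact counts and masses, substitutes the exact $T$-values, and reads the limit off as the constant term of a partial-fraction decomposition. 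Your proposal supplies neither the line classification, nor the explicit weights, nor this computation. So the value $\frac{37968534049}{11001615240}$, which is the actual content of the statement, is only asserted.

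\paragraph{The anticipated obstacle does not exist.} The diagonal torus $(\F_q^*)^4$ fixes $\mathcal H_q$. It acts transitively on the planes avoiding $P_1$ that contain a prescribed set of fundamental points. Hence masses are exactly constant within each plane type. A general plane always meets $\mathcal H_q$ in $6$ edge points and $q^2-3q+3$ points of $\Omega_4$. A plane through exactly one vertex always contains $3$ edge points and $(q-1)(q-2)$ points of $\Omega_4$. No averaging or fluctuation estimate is needed. In any case, an $o(1)$ bound on fluctuations would be useless given the cancellations above.
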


\begin{proof} The proof of this Proposition goes in the same way as the cases covered in the previous subsections. We collected the details in the Appendix.
\end{proof}
\begin{remark}\label{rem:k3}
We can also consider the planar analogue of the construction given above: we put weight $w_1$ on each fundamental point, $w_2$ on each non-fundamental
point on a side of the fundamental triangle, and $w_3$ on each of the $(q-1)^2$ remaining points 
with $n=3w_1+3(q-1)w_2+(q-1)^2 w_3$ the total weight of the set we obtain. Optimising the limit yields
\[
  w_1:w_2:w_3 \;\approx\; 2.3943\,q^2 \;:\; 2.2540\,q \;:\; 1,
  \qquad
  \lim_{q\to\infty}\tfrac{1}{3}\E[\tau_{P_1}]\;\approx\;0.881090,
\]
which is (much) better than the value $17/18$ found in the unweighted balanced quasi-arc construction in Subsection \ref{subs:planar}. However, almost all of the gain
already comes from weighting the vertices and edges: if we are choosing integer weights, then putting $w_1=19q^2$, $w_2=18q$ and $w_3=0$ results in a limit of $0.88155$, which is slightly better than the $0.8822$ as found in \cite{GruicaMontanucciZullo2026}.
\end{remark}

{

\section{Comparison with previous constructions}
\label{sec:comparison}
We briefly compare our constructions with some of the previously known ones for the random-access problem. The comparison does not just consider the recovery expectation, but also the length of the construction, the required field size, and the availability of explicit examples.

The required field size is an important parameter; see
also \cite{BertuzzoRavagnaniYaakobi2025} for results specifically
addressing the small-alphabet regime. 

The weighted weak arcs of Proposition~\ref{lem:weakarc} provide a
finite-geometric realisation of the recovery-complete family $G_4(a,a)$
of \cite{BoruchovskyEtAl2026}. Their asymptotic normalised expectation is
\[
\frac{\E[\tau_{P_i}]}4\longrightarrow 0.863788,
\]
which is very close to the value obtained by optimising the more general
family $G_4(x,y)$, approximately $0.86375$. The main advantage of our
construction lies instead in the field size: for instance, $a=4$ is
realised explicitly over $\F_{17}$, whereas the general sufficient
construction in \cite{BoruchovskyEtAl2026} gives the much larger field
size $q=2^{20}$. This latter bound is only sufficient and is not claimed
to be minimal.

Our second construction, $\mathcal H_q$, fits in the framework of Wang and Yaakobi \cite{WangYaakobi2026}. But it is explicit, has integer multiplicities, and is defined over every finite field. Moreover,
\[
\lim_{q\to\infty}\frac{\E[\tau_{P_i}(\mathcal H_q)]}{4}
\approx0.862795,
\]
which slightly improves the explicit numerical bound $0.862882$ stated in \cite{WangYaakobi2026}. The price to pay is its considerably larger length,
\[
|\mathcal H_q|=47(q-1)^3.
\]
We see that the weak-arc construction is preferable when small support and small fields are important, while $\mathcal H_q$ gives the better asymptotic recovery expectation.

\begin{table}[H]
\centering
\caption{Comparison of constructions for four information strands.}
\label{tab:k4-comparison}
\begin{tabular}{@{}lccc@{}}
\toprule
Construction & Length & Field & $\displaystyle\lim \E[\tau_{P_i}]/4$\\
\midrule
Full tetrahedron (Prop.~\ref{lem:full})
& $10(q-1)$ & any $q$ & $0.863788$\\
Weighted weak arc (Prop.~\ref{lem:weakarc})
& $10a$ & explicit for small $q$ & $0.863788$\\
$G_4(x,y)$ \cite{BoruchovskyEtAl2026}
& $6x+4y$ & general large-field construction & $\approx0.86375$\\
$\mathcal H_q$ (Prop.~\ref{prop:fullstrata})
& $47(q-1)^3$ & any $q$ & ${0.862795}$\\
Wang--Yaakobi \cite{WangYaakobi2026}
& --- & large $q$ & $<0.862882$\\
\bottomrule
\end{tabular}
\end{table}

}

\newpage
\appendix
\section{Lemma used in the proof of Proposition \ref{prop:balbetter}}
\begin{lemma} \label{lem:phi}Let $\phi(t):=T(t+2)-\frac{2\binom{t+1}{2}}{\binom{n-1}{2}}.$
Then $\sum_{r}\phi(a_r)\ge3\phi(a)$ when $a_1+a_2+a_3=3a$, and the inequality is strict unless all $a_1=a_2=a_3=a$.
\end{lemma}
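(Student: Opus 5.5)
We work with the setting of Proposition~\ref{prop:balbetter}: $n=3a+3$, the values $a_r$ are nonnegative integers with $a_1+a_2+a_3=3a$, and each $a_r\le n-3$. By Lemma~\ref{lem:UT},
\[
T(t+2)=\frac{(t+2)(t+1)t}{(n-1)(n-2)(n-2-t)}=\frac{t(t+1)(t+2)}{(3a+2)(3a+1)(3a+1-t)},
\]
valid for $t+2\ge 3$. For $t\in\{0,1\}$ we have $T(t+2)=0$, which agrees with the formula, so it holds for all $0\le t\le 3a$. Together with $2\binom{t+1}{2}/\binom{n-1}{2}=2t(t+1)/\bigl((3a+2)(3a+1)\bigr)$ this gives
\[
(3a+2)(3a+1)\,\phi(t)=\psi(t):=\frac{t(t+1)(t+2)}{3a+1-t}-2t(t+1).
\]
The claim is therefore that $\sum_r\psi(a_r)\ge 3\psi(a)$, with equality only at $a_1=a_2=a_3=a$.

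The plan is a tangent-line (supporting line) argument, which avoids needing convexity of $\psi$ (the paper notes that $\phi$ is not convex). Let $L(t)=\psi(a)+\psi'(a)(t-a)$ be the tangent line of $\psi$ at $t=a$. Because $\sum_r(a_r-a)=0$, it suffices to prove
\[
\psi(t)\ge L(t)\quad\text{for every integer } 0\le t\le 3a,
\]
with equality only at $t=a$. The difference $D(t)=\psi(t)-L(t)$ has a double zero at $t=a$. Multiplying by the positive quantity $3a+1-t$, which is at least $1$ on the range, turns $D(t)(3a+1-t)$ into a cubic polynomial in $t$ divisible by $(t-a)^2$. So
\[
D(t)(3a+1-t)=(t-a)^2\,\ell(t)
\]
for some linear polynomial $\ell(t)$, computed by polynomial division. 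The key step is then to show that $\ell(t)>0$ on $[0,3a]$. Since $\ell$ is linear, this reduces to checking $\ell(0)>0$ and $\ell(3a)>0$.

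For the computation, write $u=3a+1-t$ and expand. The leading behaviour of $t(t+1)(t+2)/u$ is convex and increasing as $t\to 3a+1$, so $\ell(3a)$ should be comfortably positive. The delicate endpoint is $t=0$, where the term $-2t(t+1)$ is concave and dominates for small $t$; this is presumably the source of the non-convexity the paper mentions. There the check amounts to $\psi(0)=0\ge\psi(a)-a\psi'(a)$.

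I expect this step, the sign of $\ell(0)$, to be the main obstacle. If it fails for small $a$, the fallback is as follows. Fix the largest value among the $a_r$, and observe that the two smaller ones can be equalised without increasing the sum, using convexity of $\psi$ on the region where it holds. This reduces the problem to the one-variable function $g(x)=2\psi(x)+\psi(3a-2x)$, whose minimum can be located by checking the sign of $g'$. The remaining small cases of $a$ can be verified directly. Strictness comes from the factor $(t-a)^2$: equality forces every $a_r=a$.
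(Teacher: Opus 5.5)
\textbf{Gap 1: the tangent-line step fails.} It fails for every $a\ge 2$, so this is not a small-$a$ issue. The product $\psi(t)(3a+1-t)=3t(t+1)(t-2a)$ is a cubic with leading coefficient $3$, and $L(t)(3a+1-t)$ is quadratic. Hence $D(t)(3a+1-t)=3(t-a)^2(t-r)$. Comparing constant terms, using $\psi(a)-a\psi'(a)=3a^2(a^2-a-1)/(2a+1)^2$, gives
\[
r=\frac{(3a+1)(a^2-a-1)}{(2a+1)^2}.
\]
This is positive for $a\ge 2$ and grows like $3a/4$. So $\ell<0$ on $[0,r)$, and in particular $\psi(0)=0<\psi(a)-a\psi'(a)$: the very check you flagged fails. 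The reason is that
\[
\psi''(t)=6\Bigl(\frac{(3a+1)(3a+2)(a+1)}{(3a+1-t)^3}-1\Bigr)
\]
is negative at $t=0$. Thus $\psi$ is concave on an initial interval $[0,t_0]$, with $t_0\approx 0.92a$, and falls below its tangent at $a$ there. Restricting to integers does not rescue a supporting-line argument. A line through $(a,\psi(a))$ of slope $k$ lying below $\psi$ at $t=0$ and $t=a+1$ needs $\psi(a)/a\le k\le \psi(a+1)-\psi(a)$. For $a=4$ this reads $-20/3\le k\le -85/12$, which is impossible. So ``verify the small cases of $a$ directly'' cannot close the gap.

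\textbf{Gap 2: the fallback equalises the wrong pair.} The fallback therefore carries the whole proof. The smaller value $a_1\le a$ can lie in the concave part $[0,t_0]$, where equalising \emph{increases} the sum. For $a=10$ one has $\psi(20)=0$ and $\psi(0)+\psi(10)=-1100/7\approx-157.1$, while $2\psi(5)=-1350/13\approx-103.8$. So replacing $(0,10,20)$ by $(5,5,20)$ raises $\sum_r\psi(a_r)$, and the reduction to $2\psi(x)+\psi(3a-2x)$ is unjustified as stated.

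\textbf{What would work.} Convexity holds to the right of the mean: $(3a+1)(3a+2)(a+1)-(2a+1)^3=a^3+6a^2+5a+1>0$, so $\psi''>0$ on $[a,3a]$. A half-convexity (right-convex function) argument then reduces to triples $(3a-2y,y,y)$ with $a\le y\le 3a/2$, i.e.\ the two \emph{larger} values equal. You would still have to prove $\psi(3a-2y)+2\psi(y)\ge 3\psi(a)$ on that range, with equality only at $y=a$; ``checking the sign of $g'$'' is not yet an argument.

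The paper avoids convexity altogether. It sets $a_1=a-u$, $a_2=a-v$, $a_3=a+u+v$ and clears denominators to get a polynomial $P_a(u,v)$. It then rewrites $P_a$ in terms of $X=u^2+uv+v^2$ and $Y=uv(u+v)$. Positivity is proved separately for $v\ge 0$, and for $v<0$ using $-Y\le uX/3$ and $X\ge 3u^2/4$.
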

\begin{proof}

Using that $T(t+2)=\frac{t(t+1)(t+2)}{(n-1)(n-2)(n-t-2)}$ and $\frac{2\binom{t+1}{2}}{\binom{n-1}{2}}
= \frac{2t(t+1)}{(n-1)(n-2)},$
we find
\[
\phi(t)
=\frac{t(t+1)}{(n-1)(n-2)}
\left(\frac{t+2}{n-t-2}-2\right).
\]
Since $n=3a+3$, this becomes $
\phi(t)
=\frac{3t(t+1)(t-2a)}{(3a+2)(3a+1)(3a+1-t)}.$
The constant factor $
\frac{3}{(3a+2)(3a+1)}$
is positive, so it is enough to show that
$
\sum_{r=1}^3 g(a_r)\ge 3g(a)$ for 
\[
g(t)=\frac{t(t+1)(t-2a)}{3a+1-t}.
\]
W.l.o.g. we may assume that
$a_1\le a_2\le a_3.
$ Since $a_1+a_2+a_3=3a$, we may write
$
a_1=a-u, a_2=a-v, a_3=a+u+v
$
for some $u,v$ where $0\leq u\leq a$ and $v\leq u$. A direct calculation gives

\begin{align}
&g(a-u)+g(a-v)+g(a+u+v)-3g(a)\nonumber \\
&\qquad =
\frac{P_a(u,v)}
{(2a+1)(2a+u+1)(2a+v+1)(2a+1-u-v)}, \label{app}
\end{align}

where
\[
\begin{aligned}
P_a(u,v)={}&4a^4(u^2+uv+v^2) +a^3\left(27(u^2v+uv^2)+26(u^2+uv+v^2)\right)\\
&+a^2\left(8(u^4+v^4)+16(u^3v+uv^3)+24u^2v^2+54(u^2v+uv^2)+32(u^2+uv+v^2)\right)\\
&+a\left(4(u^4v+uv^4)+8(u^4+v^4)+8(u^3v^2+u^2v^3) +16(u^3v+uv^3)+24u^2v^2\right.\\
&\left.\qquad +33(u^2v+uv^2)+14(u^2+uv+v^2)\right)\\
&+2(u^4v+uv^4)+2(u^4+v^4)+4(u^3v^2+u^2v^3)\\
& +4(u^3v+uv^3)+6u^2v^2+6(u^2v+uv^2)+2(u^2+uv+v^2).
\end{aligned}
\]

Using $X=u^2+uv+v^2$, and $Y=uv(u+v)$, we can rewrite $P_a(u,v)$ as
\[
P_a(u,v)=C(a)X+B(a)Y+2(2a+1)^2X^2+2(2a+1)XY,
\]
where
\[
C(a)=2(2a+1)(a^3+6a^2+5a+1)
\]
and
\[
B(a)=3(a+1)(3a+1)(3a+2).
\]
Since $a\geq0$, both $C(a)$ and $B(a)$ are strictly positive.

Suppose first that in addition to $u\geq 0$, also $v\geq0$. Then $X,Y\geq 0$ and it follows immediately that $P_a(u,v)\geq 0$. Moreover, if equality holds, then $C(a)X=0$, so $X=0$, which in turn forces $u=v=0.$

It remains to consider $-u\leq v<0$.  Write $w=-v$.  We know that $0<w\leq u\leq a$ and $
X=u^2-uw+w^2$. Define $\tilde{Y}=uw(u-w)=-Y$. We have
\[
P_a(u,v)
=C(a)X-B(a)\tilde{Y}+2(2a+1)^2X^2-2(2a+1)X\tilde{Y}.
\]
Moreover,
\[
uX-3\tilde{Y}
=u(u^2-uw+w^2)-3uw(u-w)
=u(u-2w)^2\geq0,
\]
so
\[
\tilde{Y}\leq \frac{u}{3}X.
\]
Also,
\[
X=\left(w-\frac{u}{2}\right)^2+\frac{3u^2}{4}
\geq\frac{3u^2}{4}.
\]
Using $\tilde{Y}\leq uX/3$, we obtain
\[
\begin{aligned}
P_a(u,v)
&\geq X\left[
C(a)-\frac{B(a)u}{3}
+2(2a+1)\left(2a+1-\frac{u}{3}\right)X
\right].
\end{aligned}
\]
Because $0\leq u\leq a$ and $a\geq0$,
\[
2a+1-\frac{u}{3}>0.
\]
We may therefore use $X\geq3u^2/4$ to deduce
\[
P_a(u,v)\geq X\left[
C(a)-\frac{B(a)u}{3}
+\frac{3}{2}(2a+1)u^2
 \left(2a+1-\frac{u}{3}\right)
\right].
\]
The expression in square brackets can be rewritten as
\[
\begin{aligned}
&C(a)-\frac{B(a)u}{3}
+\frac{3}{2}(2a+1)u^2
 \left(2a+1-\frac{u}{3}\right)\\
&\qquad=
\frac{4a-u+2}{2}
\left[
9a^2+9a+2
+(2a+1)\bigl((a-u)^2+(a-u)\bigr)
\right].
\end{aligned}
\]
Every factor on the right-hand side is strictly positive, as is $X$, hence, $P_a(u,v)>0$ in this case. We conclude that that $P_a(u,v)\geq0$ with equality if and only if $(u,v)=(0,0)$.

Since $u=v=0$ if and only if $a_1=a_2=a_3=a$, and the denominator in \eqref{app} is positive, we conclude that

\[
\sum_{r=1}^3 g(a_r)- 3g(a)\geq 0.
\]

with equality if and only if $a_1=a_2=a_3=a.$

\end{proof}

\section{Proof of Proposition \ref{prop:fullstrata}}
\begin{proof}
We follow the same strategy as in the proof of Proposition \ref{lem:full}: we consider the different types of points, lines and planes not containing $P_1$, and the subsets of size $s$ contained within each that span that subspace.

We list the points, lines and planes not containing $P_1$, together with their
$\mathcal H_q$-intersection size $c$, counted with multiplicity.

\emph{Points $\ne P_1$.}
\[
\begin{array}{lcl}
\text{type} & \#\ & c\\\hline
\text{vertices }P_2,P_3,P_4 & 3           & w_1=5(q-1)^3\\
\text{edge points}          & 6(q-1)      & w_2=4(q-1)^2\\
\text{weight-$3$ points}    & 4(q-1)^2    & 0\\
\text{weight-$4$ points}    & (q-1)^3     & w_4=3
\end{array}
\]

\emph{Lines $\ell\not\ni P_1$.} Note that each line in $\PG(3,q)$ meets each face of the tetrahedron, and that those have been taken with weight $0$.
\[
\begin{array}{lcl}
\text{line type} & \#\text{ lines} & c_\ell\\\hline
\text{opposite edges }P_2P_3,P_2P_4,P_3P_4 & 3            & 2w_1+(q-1)w_2\\
\text{through a fundamental point, in a face} & 9(q-1)      & w_1+w_2\\
\text{through a fundamental point, not in face}            & 3(q-1)^2    & w_1+(q-1)w_4\\
\text{meeting two opposite edges}           & 3(q-1)^2    & 2w_2+(q-1)w_4\\
\text{in face, generic}                     & 4(q-1)^2    & 3w_2\\
\text{through edge point, generic}              & 6(q-1)^3    & w_2+(q-2)w_4\\
\text{generic}                              & (q-1)^3(q-2)& (q-3)w_4
\end{array}
\]

\emph{Planes $\Pi\not\ni P_1$}
\[
\begin{array}{lcl}
\text{plane type} & \#\text{ planes} & c_\Pi\\\hline
\text{opposite face }P_2P_3P_4 & 1          & 3w_1+3(q-1)w_2\\
\text{containing one edge}        & 3(q-1)     & 2w_1+q\,w_2+(q-1)^2w_4\\
\text{through one fundamental point}      & 3(q-1)^2   & w_1+3w_2+(q-1)(q-2)w_4\\
\text{generic}       & (q-1)^3    & 6w_2+(q^2-3q+3)w_4
\end{array}
\]

Clearly $\beta_{P_1}(0)=1$ and $\beta_{P_1}(1)=n-w_1=42(q-1)^3$.
For $s=2$ every pair of the $42(q-1)^3$ points $\ne P_1$ spans a subspace not containing
$P_1$, except for those pairs of distinct points spanning a line through $P_1$. There are two types of such lines: $3$ contain $P_1$ and a second fundamental point, and $(q-1)^2$ contain $P_1$ and are not contained in a face of the tetrahedron. We get
\begin{align*}
\beta_{P_1}(2)
&= \\&=\frac{3(q-1)^3\bigl(532q^3-1580q^2+1561q-524\bigr)}{2}.
\end{align*}

For $s\ge3$ we use the same
inclusion--exclusion as in the full-tetrahedron and find
\[
\sum_{s=3}^{n-1}\frac{\beta_{P_1}(\mathcal H_q,s)}{\binom{n-1}{s}}
 =\sum_{\Pi\not\ni P_1}T(c_\Pi)
 -(q-1)\sum_{\ell\not\ni P_1}T(c_\ell)
 +(q-1)^2(q+1)\sum_{Q\ne P_1}T(c_Q).
\]
Substituting all values from above we see,
\[
\begin{aligned}
\E[\tau_{P_1}]
={}&1+\frac{42(q-1)^3}{n-1}+\frac{\beta_{P_1}(2)}{\binom{n-1}{2}}\\
&+T\!\big(3w_1+3(q-1)w_2\big)
 +3(q-1)\,T\!\big(2w_1+q\,w_2+(q-1)^2w_4\big)\\
&+3(q-1)^2\,T\!\big(w_1+3w_2+(q-1)(q-2)w_4\big)
 +(q-1)^3\,T\!\big(6w_2+(q^2-3q+3)w_4\big)\\
&-(q-1)\Big[\,3\,T\!\big(2w_1+(q-1)w_2\big)+9(q-1)\,T(w_1+w_2)
      +3(q-1)^2\,T\!\big(w_1+(q-1)w_4\big)\\
&\qquad\quad +3(q-1)^2\,T\!\big(2w_2+(q-1)w_4\big)+4(q-1)^2\,T(3w_2)\\
&\qquad\quad +6(q-1)^3\,T\!\big(w_2+(q-2)w_4\big)
      +(q-1)^3(q-2)\,T\!\big((q-3)w_4\big)\Big]\\
&+(q-1)^2(q+1)\Big[\,3\,T(w_1)+6(q-1)\,T(w_2)+(q-1)^3\,T(w_4)\,\Big].
\end{aligned}
\]

Writing $D:=(n-1)(n-2)=(47q^3-141q^2+141q-48)(47q^3-141q^2+141q-49)$, we find
\[
\begin{array}{l}
T(w_1)=\dfrac{5(5q^3-15q^2+15q-7)(5q^3-15q^2+15q-6)}{42\,D},\\
T(w_2)=\dfrac{8(2q-3)(2q-1)(2q^2-4q+1)}{(47q-51)\,D},\qquad
T(w_4)=T(3)=\dfrac{6}{(47q^3-141q^2+141q-50)\,D},\\
T\!\big(3w_1+3(q-1)w_2\big)=\dfrac{27(3q-4)(9q^2-15q+7)(27q^3-81q^2+81q-29)}{20\,D},\\
T\!\big(2w_1+q\,w_2+(q-1)^2w_4\big)
   =\dfrac{7(2q-1)(14q^3-35q^2+28q-9)(14q^3-35q^2+28q-8)}{(33q-40)\,D},\\
T\!\big(w_1+3w_2+(q-1)(q-2)w_4\big)
   =\dfrac{(5q^2+5q-13)(5q^3-18q+11)(5q^3-18q+12)}{3(14q^2-33q+20)\,D},\\
T\!\big(6w_2+(q^2-3q+3)w_4\big)
   =\dfrac{3(9q^2-19q+11)(27q^2-57q+31)(27q^2-57q+32)}{(47q^3-168q^2+198q-80)\,D},\\
T\!\big(2w_1+(q-1)w_2\big)
   =\dfrac{28(7q^3-21q^2+21q-8)(14q^3-42q^2+42q-15)}{33\,D},\\
T(w_1+w_2)=\dfrac{(5q-1)(5q^3-11q^2+7q-3)(5q^3-11q^2+7q-2)}{2(21q-23)\,D},\\
T\!\big(w_1+(q-1)w_4\big)
   =\dfrac{(5q^2-10q+8)(5q^3-15q^2+18q-10)(5q^3-15q^2+18q-9)}{3(14q^2-28q+13)\,D},\\
T\!\big(2w_2+(q-1)w_4\big)
   =\dfrac{(8q-5)(8q^2-13q+3)(8q^2-13q+4)}{(47q^2-102q+52)\,D},\\
T(3w_2)=\dfrac{24(6q^2-12q+5)(12q^2-24q+11)}{(47q-59)\,D},\\
T\!\big(w_2+(q-2)w_4\big)
   =\dfrac{(4q^2-5q-4)(4q^2-5q-3)(4q^2-5q-2)}{(47q^3-145q^2+146q-45)\,D},\\
T\!\big((q-3)w_4\big)=\dfrac{3(q-3)(3q-11)(3q-10)}{(47q^3-141q^2+138q-38)\,D}.
\end{array}
\]

We can collect these terms and find the partial-fraction decomposition

\begin{align*}
&\E[\tau_{P_i}(\mathcal H_q)]\\
&=\frac{37968534049}{11001615240} + \frac{-3411 q^2 + 8703 q - 5562}{2209 (47 q^3 - 141 q^2 + 138 q - 38)} \\&+ \frac{11467308 q^2 - 27062628 q + 15125322}{4879681 (47 q^3 - 145 q^2 + 146 q - 45)} + \frac{2303}{363 (33 q - 40)} \\&- \frac{188}{1029 (21 q - 23)} + \frac{846 q^2 - 1692 q + 873}{2209 (47 q^3 - 141 q^2 + 141 q - 50)} \\&+ \frac{150528}{4879681 (47 q - 51)} + \frac{188 - 705 q}{2744 (14 q^2 - 33 q + 20)} \\&- \frac{82944}{103823 (47 q - 59)} + \frac{-47 q + 47}{196 (14 q^2 - 28 q + 13)}\\&+ \frac{128235 - 153147 q}{103823 (47 q^2 - 102 q + 52)} + \frac{600813 q^2 - 1174806 q + 630081}{103823 (47 q^3 - 168 q^2 + 198 q - 80)}
\end{align*}

The leading constant is exactly the limit $\tfrac{37968534049}{11001615240}$, and
every remaining term tends to $0$ as $q\to\infty$.
\end{proof}
We can see that the first values of the expectation and of $\E[\tau_{P_i}]/4$ are
\[
\renewcommand{\arraystretch}{1.3}
\begin{array}{lcccc}
q & 2 & 3 & 4 & 7\\\hline
\E[\tau_{P_i}] & 3.9075439 & 3.6047582 & 3.5455837 & 3.4952331\\
\E[\tau_{P_i}]/4 & 0.9768860 & 0.9011896 & 0.8863959 & 0.8738083
\end{array}
\]
decreasing to $\tfrac{37968534049}{44006460960}\approx0.8627945$.

\begin{remark} The calculations in this paper are elementary but tedious to do by hand. All the algebraic manipulations in this paper have been executed with Maple (2025).
\end{remark}
\end{document}